\numberwithin{equation}{section}
\theoremstyle{definition}
\newtheorem{thm}{Theorem}[section]
\newtheorem{prop}[thm]{Proposition}
\newtheorem{cor}[thm]{Corollary}
\newtheorem{lem}[thm]{Lemma}
\newtheorem{defn}[thm]{Definition}
\newtheorem{rem}{Remark}
\newtheorem{exe}{Example}
\newcommand{\lie}[1]{\mathfrak{#1}}
\def\cal#1{\mathcal{#1}}
\theoremstyle{definition}
\def\cal#1{\mathcal{#1}}
\begin{document}

\title[]{Bases for local Weyl modules for the \\ hyper and truncated current $\lie{sl}_2$--algebras}
\author{Angelo Bianchi}
\email{acbianchi@unifesp.br} 
\address{Department of Science and Technology - Federal University of S\~ao Paulo (ICT - Unifesp), Brazil}
\thanks{A.B. is partially supported by CNPq grant 462315/2014-2 and FAPESP grants 2015/22040-0 and 2014/09310-5.}

\author{Evan Wilson}
\email{wilsoneaster@gmail.com} 
\address{Mathematics Department - Franklin \& Marshall College, Lancaster, PA, USA      }
\thanks{E. W. was partially supported by FAPESP grant 2011/12079-5.}

\subjclass[2010]{Mathematics Subject Classification. 17B10, 81R10}
\keywords{Gr\"obner-Shirshov bases, hyper current algebra, local Weyl modules}

\date{}

\dedicatory{}

\begin{abstract}
	We use the theory of Gr\"obner-Shirshov bases for ideals to construct linear bases for graded local Weyl modules for the (hyper) current and the truncated current algebras associated to the finite-dimensional complex simple Lie algebra $\mathfrak{sl}_2$. The main result is a characteristic-free construction of bases for this important family of modules for the hyper current $\mathfrak{sl}_2$-algebra. In the positive characteristic setting this work represents the first construction in the literature. In the characteristic zero setting, the method provides a different construction of the Chari-Pressley (also Kus-Littelmann) bases and the Chari-Venkatesh bases for local Weyl modules for the current $\mathfrak{sl}_2$-algebra. Our construction allows us to obtain new bases for the local Weyl modules for truncated current $\mathfrak{sl}_2$-algebras with very particular properties.
\end{abstract}

\maketitle

\section*{Introduction}

The category of level zero representations of affine (and quantum affine) algebras and its full subcategory of finite-dimensional representations have been extensively studied in the last three decades. The representation theory of these algebras gives a significant contribution to identifying interesting families of finite-dimensional representations of \textit{loop} and \textit{current algebras}, such as the universal finite-dimensional highest weight modules called \textit{local Weyl modules}, which became objects of independent and deep interest in the last ten years (cf. \cite{Csurvey,CFK,CFS,FNS,JMsurvey,nehsav,nehsav2,nehsavsen}). Among the generalizations of current algebras (as in \cite{CFK}) special attention is given to the \textit{truncated current algebras} (cf. \cite{GF,kus,bw}), which are finite-dimensional quotients of the current algebra.
			
During the last decade, the study of the positive characteristic analogues of these local Weyl modules for current and loop algebras was also developed in \cite{bm,BMM,hyperlar}, where we can see that the characteristic zero and positive characteristic cases share several properties. To differentiate the positive characteristic case for the current algebra, we will refer to it as \textit{hyper current algebra}.

Even with a large number of papers dedicated to the study of structure, character, decomposition, tensor product, fusion product, and reducibility of Weyl modules, only few are devoted to constructing bases for these modules (cf. \cite{CL,CPweyl,CV,kus}). Additionally, some recent papers (cf. \cite{stab,stab2,stab1}) focused in studying properties of the bases constructed in \cite{CL,CPweyl}. 

A basis for local Weyl modules for the current $\lie{sl}_2$-algebra was first constructed by Chari-Pressley \cite{CPweyl} and it was used by Chari-Loktev \cite{CL} in the construction of a basis for local Weyl modules for current algebras associated to $\lie{sl}_{n+1}, \ n> 1$. Recently, two constructions came up: Kus-Littelmann \cite{kus} constructed a basis for truncated local Weyl modules for the current $\lie{sl}_{2}$-algebra whose construction contemplates the Chari-Pressley basis for graded local Weyl modules by using a very different approach; and Chari-Venkatesh \cite{CV} provided the construction of a different basis for graded local Weyl modules for the current $\lie{sl}_{2}$-algebra. Unfortunately, we still do not see how to adapt any of those constructions to the positive characteristic setting. This fact already appeared with many other results first proved in characteristic zero and then generalized to positive characteristic setting by using very different tools as in the pairs of papers \cite{CPweyl,hyperlar} and \cite{bm,CFS}. 

The present paper was originally intended to present a different approach to obtain some new bases for graded local Weyl modules for the current $\lie{sl}_2$-algebra. While this work was developed, we noticed that we could provide a characteristic-free construction of a basis for graded local Weyl modules for the current and hyper current $\lie{sl}_2$-algebra. This work provides the first explicit construction of bases for hyper current algebras (see Corollary \ref{cor:basis}) and it is also an alternative construction to get the Chari-Pressley (cf. \cite{CPweyl}) and Kus-Littelmann (cf. \cite{kus}) bases in the characteristic zero setting. Further, our method is also an alternative construction (see Theorem \ref{t:revbasis}) to get Chari-Venkatesh basis (cf. \cite{CV}) with some advantages due its interpretation coming from monomial orders. Our method made possible to construct a distinguished basis for truncated local Weyl modules for the truncated current $\lie{sl}_2$-algebra (see Theorem \ref{truncbasis}).

The paper is organized as follows: in Section \ref{pre} we define the basic objects and introduce some tools and results; in Section \ref{genGarland} we prove a generalized version of Garland's formula; in Section \ref{gsb} we present the construction of Gr\"obner-Shirshov bases for polynomial rings and divided polynomial algebras; in Section \ref{mods} we provide the definition of local Weyl modules and basic results in a general setting; in Section \ref{s:sl2} we specialize to the case $\lie{sl}_2$ and construct a basis for graded local Weyl modules; in Section \ref{app} we give some applications of the main result in the previous section; Section \ref{proofs} provides the proofs of the two main results of the paper.

\

\noindent \textbf{Acknowledgment:} We wish to thank Dr. Vyacheslav Futorny for support, Dr. Adriano Moura for helpful suggestions and discussions, and Dr. Ghislain Fourier for calling our attention to the truncated case.

\section{Preliminaries}\label{pre}

In this section we fix some notation and review basic facts about the base algebras. We always denote by $\mathbb C$, $\mathbb Z$, $\mathbb Z_+$ and $\mathbb N$ the sets of complex, integer, nonnegative integer and natural numbers, respectively.

\subsection{Simple Lie algebras and their current algebras} 

Let $\lie g$ be a finite-dimensional complex simple Lie algebra with a fixed Cartan subalgebra $\lie h$. Denote by $R$ the associated root system, fix a choice of positive roots $R^+\subseteq R$ and let $\lie g=\lie n^-\oplus\lie h\oplus \lie n^+$ be the corresponding triangular decomposition. The associated simple roots and fundamental weights will be denoted respectively by $\alpha_i$ and $\omega_i$, $i\in I$, where $I$ is an indexing set for the nodes of the Dynkin diagram of $\lie g$. Denote by $P$ and $Q$ the weight and root lattices of $\lie g$. Let $P^+$ and $Q^+$ be the $\mathbb Z_+$-span of the fundamental weights and simple roots of $\lie g$, respectively. For notational convenience, fix a Chevalley basis $\{x_\alpha^\pm, h_i:\alpha\in R^+, i\in I\}$ of $\lie g$.

Let $\lie g[t] = \lie g\otimes \mathbb C[t]$ denote the corresponding current algebra (namely, the extension by scalars of $\lie g$ to the polynomial ring $\mathbb C[t]$),  where the bracket is given by $[x\otimes f, y\otimes g] =[x,y]\otimes fg$ for $x,y\in\lie g$ and $f,g\in\mathbb C[t]$. Notice that $\lie g\otimes 1$ is a subalgebra of $\lie g[t]$ isomorphic to $\lie g$ and, if $\lie b$ is a subalgebra of $\lie g$, then $\lie b[t]=\lie b\otimes \mathbb C[t]$ is naturally a subalgebra of $\lie g[t]$. In particular, we have $\lie g[t] = \lie n^-[t] \oplus \lie h[t] \oplus \lie n^+[t]$ and $\lie h[t]$ is an abelian subalgebra of $\lie g[t]$. 

For a Lie algebra $\lie a$, we denote by $U(\lie a)$ the corresponding universal enveloping algebra of the Lie algebra $\lie a$. The PBW theorem implies that the multiplication establishes isomorphisms $U(\lie g[t])\cong U(\lie n^-[t])\otimes U(\lie h[t])\otimes U(\lie n^+[t]).$

\subsection{Integral forms and hyperalgebras}\label{s:hyper}

For a fixed order on the Chevalley basis of $\lie g[t]$ and a PBW monomial with respect to this order, we construct an ordered monomial in the elements of the set
\begin{equation*}
{\cal M}[t]=\left\{(x^\pm_{\alpha}\otimes t^r)^{(k)}, \Lambda_{\alpha_i, k},\tbinom{h_{i}}{k}  :  \alpha \in R^+, i\in I, k, r\in \mathbb Z_+\right\}  
\end{equation*}
where
$$(x^\pm_{\alpha}\otimes t^r)^{(k)}:=\frac{(x^\pm_{\alpha}\otimes t^r)^{k}}{k!},\qquad  \binom{h_i}{k}:=\frac{h_i(h_i-1)\dots(h_i-k+1)}{k!}$$
and 
$$\Lambda_{\alpha}(u) := \sum_{r=0}^\infty \Lambda_{\alpha,r}u^r := \exp \left(-\sum_{s=1}^{\infty}\frac{h_{\alpha}\otimes t^{s}}{s} u^{s}\right),$$
by using the correspondence $(x^\pm_\alpha\otimes t^r)^k \leftrightarrow (x^\pm_\alpha \otimes t^r)^{(k)}$, $h_{i}^k \leftrightarrow \tbinom{h_{i}}{k}$ and $(h_{i}\otimes t^r)^k \leftrightarrow (\Lambda_{\alpha_i,r})^k$.
Using a similar correspondence we also consider monomials in $U(\lie g)$ formed by elements of
$$\cal M=\left\{ (x_{\alpha}^\pm)^{(k)},\tbinom{h_i}{k}  :  \alpha\in R^+, i\in I, k\in\mathbb Z_+\right\}.$$
Notice that $\cal M$ can be naturally regarded as a subset of ${\cal M}[t]$.
The set of ordered monomials thus obtained are bases of $U(\lie g[t])$ and $U(\lie g)$, respectively.

Let $U_{\mathbb Z}(\lie g[t]) \subseteq U(\lie g[t])$ and $U_{\mathbb Z}(\lie g) \subseteq U(\lie g)$ be the $\mathbb Z$--subalgebras generated respectively by $\{(x^\pm_{\alpha}\otimes t^r)^{(k)} :  \alpha \in R^+, \ r,k\in\mathbb Z_+\}$, 
and $\{(x_{\alpha}^\pm)^{(k)}  :  \alpha\in R^+, k\in\mathbb Z_+\}$. The following crucial theorem was proved in \cite{kosagz}, in the $U(\lie g)$ case, and in \cite{G,mitz}  (see also \cite{sam}) for the $U(\lie g[t])$ case.

\begin{thm} \label{forms}
	The subalgebras $U_{\mathbb Z}(\lie g)$ and $U_{\mathbb Z}(\lie g[t])$ are free $\mathbb Z$-modules and the sets of ordered monomials constructed from $\cal M$ and ${\cal M}[t]$ are $\mathbb Z$-bases of $U_{\mathbb Z}(\lie g)$ and $U_{\mathbb Z}(\lie g[t])$, respectively. \hfill\qedsymbol
\end{thm}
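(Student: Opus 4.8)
The plan is to follow the classical strategy of Kostant, Garland, and Mitzman, reducing the general case to the rank-one calculations. First I would establish the structural reduction: by the PBW-type factorization $U(\lie g[t]) \cong U(\lie n^-[t]) \otimes U(\lie h[t]) \otimes U(\lie n^+[t])$ together with the fact that the ordered monomials in $\cal M[t]$ respect this triangular factorization, it suffices to prove that each of the three pieces $U_{\mathbb Z}(\lie n^\pm[t])$ and $U_{\mathbb Z}(\lie h[t])$ is a free $\mathbb Z$-module with the claimed ordered monomials as a basis, and then that the multiplication map on the tensor product of these integral forms lands in (and spans) $U_{\mathbb Z}(\lie g[t])$. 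The nilpotent parts further decompose over the positive roots, so the core of the argument is: (a) the divided powers $(x^\pm_\alpha \otimes t^r)^{(k)}$ for a single root vector generate a $\mathbb Z$-form of $U(\lie g_\alpha[t])$ with the ordered monomials as $\mathbb Z$-basis (this is essentially a polynomial-ring statement about divided power algebras), and (b) the elements $\Lambda_{\alpha_i,r}$ and $\tbinom{h_i}{k}$ generate a $\mathbb Z$-form of $U(\lie h[t])$.

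Next I would handle the compatibility between pieces, which is where the real work lies. One must show that a product of a monomial from $U_{\mathbb Z}(\lie n^+[t])$, one from $U_{\mathbb Z}(\lie h[t])$, and one from $U_{\mathbb Z}(\lie n^-[t])$ — taken in the ``wrong'' order — can be rewritten as a $\mathbb Z$-linear combination of ordered monomials in $\cal M[t]$. This requires straightening relations of three types: commuting a divided power $(x^+_\beta\otimes t^s)^{(l)}$ past $(x^-_\alpha\otimes t^r)^{(k)}$, commuting root vectors of the same sign for different roots (using that $[x^\pm_\alpha, x^\pm_\beta]$ is an integer multiple of $x^\pm_{\alpha+\beta}$ when $\alpha+\beta\in R$), and commuting the Cartan elements $\tbinom{h_i}{k}$ and $\Lambda_{\alpha_i,r}$ past root vectors. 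The first of these is the delicate one and is precisely where a generalized Garland identity is needed: expanding $(x^-_\alpha\otimes t^r)^{(k)}(x^+_\alpha\otimes t^s)^{(l)}$ produces the elements $\Lambda_{\alpha,j}$ with integer coefficients, so I would invoke the generalized Garland formula promised in Section \ref{genGarland}. The same formula, specialized, also yields that the $\Lambda_{\alpha,r}$ themselves lie in $U_{\mathbb Z}(\lie g[t])$, closing the loop between generators.

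The main obstacle I anticipate is \emph{integrality} rather than spanning: it is routine that the ordered monomials span $U(\lie g[t])$ over $\mathbb Q$ and that they are $\mathbb Q$-linearly independent (PBW), and it is straightforward that they lie in $U_{\mathbb Z}(\lie g[t])$; the subtle point is the reverse inclusion, namely that $U_{\mathbb Z}(\lie g[t])$ is \emph{exactly} their $\mathbb Z$-span and contains no ``extra'' fractional combinations. This is forced by the straightening argument above together with a careful bookkeeping of denominators — each reordering step must be shown to introduce only integer coefficients, typically via binomial-coefficient identities such as $\tbinom{a+b}{k}=\sum_j \tbinom{a}{j}\tbinom{b}{k-j}$ and the Vandermonde-type identities underlying Garland's lemma. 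Since this is a known theorem (attributed in the excerpt to \cite{kosagz} for $U(\lie g)$ and to \cite{G,mitz,sam} for $U(\lie g[t])$), I would present the proof as an organized recollection: reduce to rank one and to the three triangular blocks, cite the divided-power computation for the nilpotent blocks, treat the Cartan block via the $\Lambda_{\alpha,r}$ and $\tbinom{h_i}{k}$ generators, and then verify closure under the triangular straightening relations using the generalized Garland identity, with the denominator-tracking as the one genuinely load-bearing verification.
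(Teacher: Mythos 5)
The paper does not prove this theorem at all: it is quoted as a known result, with the $U(\lie g)$ case attributed to Kostant \cite{kosagz} and the $U(\lie g[t])$ case to Garland and Mitzman \cite{G,mitz} (see also Chamberlin \cite{sam}), and the \qedsymbol{} marks it as cited rather than proved. Your outline is essentially a faithful reconstruction of the strategy in those references --- triangular factorization into $U_{\mathbb Z}(\lie n^\pm[t])$ and $U_{\mathbb Z}(\lie h[t])$ blocks, rank-one divided-power computations, the Cartan block generated by the $\Lambda_{\alpha_i,r}$ and $\tbinom{h_i}{k}$, and straightening via Garland-type identities with integer coefficients --- so there is no divergence in method to report, only the observation that the genuinely hard step you flag (that every reordering introduces only integral coefficients, so $U_{\mathbb Z}(\lie g[t])$ is exactly the $\mathbb Z$-span of the ordered monomials) is precisely the content supplied in detail by \cite{G,mitz,sam}, and your proposal correctly defers to it rather than carrying it out.
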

In other words, $U_\mathbb Z(\lie g[t])$ is an integral form of $U(\lie g[t])$ (similarly for $U_\mathbb Z(\lie g)$).

\

	If $\lie a$ is any subalgebra of $\lie g$, set $U_\mathbb Z(\lie a[t]):= U(\lie a[t] )\cap U_\mathbb Z(\lie g[t])$ and  $U_\mathbb Z(\lie a): = U(\lie a )\cap U_\mathbb Z(\lie g)$. Then, 
\begin{equation*}
\lie a\in\{ \lie g, \lie n^\pm[t],\lie h[t], \lie n^\pm,\lie h\} \quad\Rightarrow\quad \mathbb C\otimes_\mathbb Z U_\mathbb Z(\lie a)\stackrel{\cong}{\longrightarrow} U(\lie a).
\end{equation*}

Given a field  $\mathbb F$, the $\mathbb F$--\textit{hyperalgebra of $\lie a$} is defined by
\begin{equation*}
	U_\mathbb F(\lie a) :=  \mathbb F\otimes_{\mathbb Z}U_\mathbb Z(\lie a).
\end{equation*}
We will refer to $U_\mathbb F(\lie g[t])$  as the \textit{hyper current algebra of $\lie g$ over $\mathbb F$}. 

Notice that we have
\begin{equation}\label{PBWordering}
U_\mathbb F(\lie g)=U_\mathbb F(\lie n^-)U_\mathbb F(\lie h)U_\mathbb F(\lie n^+) \quad\text{and} \quad U_\mathbb F(\lie g[t])=U_\mathbb F(\lie n^-[t])U_\mathbb F(\lie h[t])U_\mathbb F(\lie n^+[t]).
\end{equation}

\begin{rem}\label{hyperrmk} More discussion about hyperalgebras can be found it \cite{bm,BMM,hyperlar}. We recall some important facts:
	\begin{enumerate}
		\item \label{hyperrmk1} If the characteristic of $\mathbb F$ is zero, the algebra $U_\mathbb F(\lie g[t])$ is naturally isomorphic to $U(\lie g[t]_\mathbb F)$ where $\lie g[t]_\mathbb F =  \mathbb F\otimes_\mathbb Z\lie g[t]_\mathbb Z $ and $\lie g[t]_\mathbb Z$ is the $\mathbb Z$-span of the Chevalley basis of $\lie g[t] $.  If the characteristic of $\mathbb F$ is positive, we only have an algebra homomorphism $U(\lie a_\mathbb F)\to U_\mathbb F(\lie a)$ which is neither injective nor surjective. 
		\item Notice that the Hopf algebra structure of the universal enveloping algebras induce such structure on the hyperalgebras. For any Hopf algebra $H$, we denote by $H^0$ its augmentation ideal.
		\item The integral form of $\lie g$ (cf. \cite{kosagz}) coincides with its intersection with the integral form of $U(\lie g[t])$ (cf. \cite{G,mitz}) , i.e $U_\mathbb Z(\lie g)=U(\lie g)\cap U_\mathbb Z(\lie g[t])$, which allows us to regard $U_\mathbb Z(\lie g)$ as a $\mathbb Z$-subalgebra of $U_\mathbb Z(\lie g[t])$. 
	\end{enumerate}	
\end{rem}

\subsection{Partitions}\label{partitions} The language of partitions will be freely used in the text in a natural way associated with exponents of polynomials. A partition of a positive integer number is a sequence $\lambda=(\lambda_1,\lambda_2,\dots, \lambda_k)$ of non increasing positive integers ($\lambda_i\geq \lambda_{i+1}$ and $\lambda_i>0$ for $1\leq i\leq k$). We say $\lambda$ is a partition of $r$, symbolically  $\lambda \vdash r$, if $\sum_{i=1}^k \lambda_i=r$. We also say $|\lambda|=r$ is the sum of the partition and $\ell(\lambda)=k$ is its length. A useful way of graphically representing partitions is the Ferrers diagram which associates a left justified row of $\lambda_i$ boxes for each $i\in\{1,2,\dots, k\}$, decreasing from top to bottom. The transpose of $\lambda$, denoted $\lambda'$ is the partition given by changing rows and columns in $\lambda$. The partial order $\preceq$ known as \emph{dominance order} is defined by $\mu \preceq \lambda$ if, and only if, $\sum_{i=1}^j \mu_i\leq \sum_{i=1}^j \lambda_i$, $1\leq j\leq \ell(\lambda)$.

\section{Some commutation formulas: a Generalization of Garland's formula.} \label{genGarland}

We state some useful commutation identities in hyperalgebras. First we recall a famous formula due to Garland and then we present a generalization of it.

Given $\alpha\in R^+$, consider the following power series with coefficients in $U_\mathbb Z(\lie n^-[t])$:
$$
	X_{\alpha;(m,n)}(u) := \sum_{j=0}^\infty  (x^-_{\alpha}\otimes t^{(m+n)j+m}) u^{j+1}.
$$

In what follows we denote $\lie h\otimes t\mathbb C[t]$ by $t\lie h[t]$ for shortness.

\begin{lem} \cite{mitz} \label{basicrel} Let $\alpha\in R^+$ and  $\ell,k,m,n\in \mathbb Z_+$. We have
	$$(x_\alpha^+\otimes t^n)^{(\ell )}(x_\alpha^-\otimes t^m)^{(k)} = ({X_{\alpha;(m,n)}(u)}^{(k-\ell)})_{k} \mod U_\mathbb Z (\lie g [t]\left(U_\mathbb Z(\lie n^+[t])^0+U_\mathbb Z(t\lie h[t])^0\right)$$ 
	where the index $k$ means the coefficient of $u^k$ of the above power series.
	\quad\hfill\qedsymbol
\end{lem}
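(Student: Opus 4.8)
The plan is to reduce the identity to the case $\lie g=\lie{sl}_2$, then to characteristic zero, and finally to extract it from a ``Gauss decomposition'' of a product of exponentials. For the first reduction, fix $\alpha$ and consider the algebra homomorphism $\phi\colon U_\mathbb Z(\lie{sl}_2[t])\to U_\mathbb Z(\lie g[t])$ determined by $(x^\pm\otimes t^r)^{(k)}\mapsto (x^\pm_\alpha\otimes t^r)^{(k)}$: it carries each side of the $\lie{sl}_2$-version of the statement to the corresponding side of the $\lie g$-version and the left ideal $U_\mathbb Z(\lie{sl}_2[t])\big(U_\mathbb Z(\lie n^+[t])^0+U_\mathbb Z(t\lie h[t])^0\big)$ into its analogue for $\lie g$, so it is enough to argue in $U_\mathbb Z(\lie{sl}_2[t])$; from now on $e,f,h$ denote the standard generators and $J$ the left ideal of the statement. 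For the second reduction, $U_\mathbb Z(\lie{sl}_2[t])$ is a free $\mathbb Z$-module (Theorem~\ref{forms}) and, by the triangular decomposition~\eqref{PBWordering}, so is $U_\mathbb Z(\lie{sl}_2[t])/J$ (with PBW representatives the ordered monomials in $\lie n^-[t]$ times the $\tbinom{h}{a}$); hence this quotient embeds in its rationalization, and since both sides of the congruence are $\mathbb Z$-integral it suffices to prove it in $U(\lie{sl}_2[t])$ over $\mathbb Q$, where $x^{(p)}=x^p/p!$.

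Working over $\mathbb Q$ in the power series ring $U(\lie{sl}_2[t])[[v,w]]$, set $A(v):=\sum_{j\ge 0}(f\otimes t^{(m+n)j+m})(-1)^j v^j w^{j+1}$ (equivalently $A(v)=-v^{-1}X_{\alpha;(m,n)}(-vw)$). The key step is
\[
\exp\!\big(v(e\otimes t^n)\big)\,\exp\!\big(w(f\otimes t^m)\big)\ \equiv\ \exp\!\big(A(v)\big)\pmod{J}.
\]
Granting this, comparing the coefficients of $v^\ell w^k$ produces $(e\otimes t^n)^{(\ell)}(f\otimes t^m)^{(k)}$ on the left, while the substitution $u\mapsto -vw$ and a routine generating-function manipulation identify the $v^\ell w^k$-coefficient of $\exp(A(v))$ with $(-1)^\ell\big(X_{\alpha;(m,n)}(u)^{(k-\ell)}\big)_k$, which is the asserted formula. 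To prove the displayed identity I would note that the images of its two sides in $\big(U(\lie{sl}_2[t])/J\big)[[v,w]]$ satisfy the same linear differential equation $\partial_v\Phi=(e\otimes t^n)\cdot\Phi$ (for the left module structure of $U(\lie{sl}_2[t])/J$) with the same value $\exp(w(f\otimes t^m))\bmod J$ at $v=0$, and such an equation has a unique solution over $\mathbb Q$. For the left-hand side this is immediate from $\partial_v\exp(v(e\otimes t^n))=(e\otimes t^n)\exp(v(e\otimes t^n))$; for $\exp(A(v))$ one uses that $[e\otimes t^n,A(v)]$ has coefficients in $t\lie h[t]$ and $[[e\otimes t^n,A(v)],A(v)]$ has coefficients in $\lie n^-[t]$ — so the adjoint expansion stops at the second term — to get
\[
(e\otimes t^n)\exp(A(v))=\exp(A(v))\Big[(e\otimes t^n)+[e\otimes t^n,A(v)]+\tfrac12\big[[e\otimes t^n,A(v)],A(v)\big]\Big];
\]
modulo $J$ the first two bracketed terms vanish, and a direct computation gives $\tfrac12[[e\otimes t^n,A(v)],A(v)]=\partial_vA(v)$, so the right-hand side reduces modulo $J$ to $\exp(A(v))\,\partial_vA(v)=\partial_v\exp(A(v))$.

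I expect the main obstacle to be this final cancellation $\tfrac12[[e\otimes t^n,A(v)],A(v)]=\partial_vA(v)$ together with the sign and index bookkeeping in the coefficient extraction — in particular the appearance of the factor $(-1)^\ell$, and the fact that when $m+n=0$ the reduction must be taken modulo the full augmentation ideal of $U(\lie h[t])$, since then $h_\alpha=h_\alpha\otimes t^0$ occurs among the coefficients of $[e\otimes t^n,A(v)]$. A more computational alternative, avoiding power series, is to induct on $\ell$ over $\mathbb Q$: the base case $\ell=0$ is $(f\otimes t^m)^{(k)}=\big(X_{\alpha;(m,n)}(u)^{(k)}\big)_k$, and one passes from $\ell$ to $\ell+1$ via $(\ell+1)(e\otimes t^n)^{(\ell+1)}=(e\otimes t^n)(e\otimes t^n)^{(\ell)}$ and the one-step commutation $(e\otimes t^n)(f\otimes t^m)^{(k)}\equiv-(f\otimes t^m)^{(k-2)}(f\otimes t^{2m+n})\pmod{J}$, extended to arbitrary monomials in the $f\otimes t^{*}$ appearing in $\big(X_{\alpha;(m,n)}(u)^{(k-\ell)}\big)_k$.
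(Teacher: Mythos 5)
The paper gives no proof of this lemma at all (it is quoted from [mitz] with a qed symbol), so the comparison is with the classical Garland--Mitzman argument; your proposal is essentially that argument, correct in outline, with the Gauss-type factorization of $\exp(v(x_\alpha^+\otimes t^n))\exp(w(x_\alpha^-\otimes t^m))$ obtained via a terminating adjoint expansion plus uniqueness for $\partial_v\Phi=(x_\alpha^+\otimes t^n)\Phi$ instead of the usual $2\times 2$ matrix identity over $\mathbb C[t]$, preceded by the standard reductions to the $\alpha$-copy of $\lie{sl}_2$ and to characteristic zero. The key cancellation does hold: $[[e\otimes t^n,A(v)],A(v)]=-2\sum_{i,j\ge 0}(-1)^{i+j}v^{i+j}w^{i+j+2}\,(f\otimes t^{(m+n)(i+j+1)+m})$, and collecting the $r$ pairs with $i+j=r-1$ gives exactly $2\,\partial_vA(v)$. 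The two issues you flag at the end are both real, and both resolve in your favor. First, the sign: your coefficient extraction is correct, the $v^\ell w^k$-coefficient of $\exp(A(v))$ is $(-1)^\ell\bigl(X_{\alpha;(m,n)}(u)^{(k-\ell)}\bigr)_k$, and this \emph{signed} identity is the true one --- e.g.\ $(x^+\otimes t)(x^-)^{(2)}\equiv -(x^-\otimes t)$ by direct computation, and it is precisely what Proposition \ref{prop:commutation} yields at $s=1$, where the signs $(-1)^{\ell(\eta)}$ do appear in $Y_\alpha[s]$; so the lemma as printed is missing either a factor $(-1)^\ell$ or alternating signs inside $X_{\alpha;(m,n)}$, a slip that is harmless for the paper since these elements are only used as annihilation relations on $v_\lambda$. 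Second, the case $m=n=0$ must indeed be excluded (or the ideal enlarged to contain $U_\mathbb Z(\lie h[t])^0$), since then $h_\alpha$ survives the reduction.

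The only step you assert without argument is the torsion-freeness of $U_\mathbb Z(\lie{sl}_2[t])/J$ needed to descend the congruence from $\mathbb Q$ to $\mathbb Z$. It does follow from Theorem \ref{forms} with one extra line: every integral PBW monomial with a nontrivial $\Lambda$-part or positive part lies in $J$, so the classes of the monomials in divided powers of $\lie n^-[t]$ times the $\binom{h}{a}$ span the quotient over $\mathbb Z$, and these classes are $\mathbb Q$-linearly independent in $U_\mathbb Q/J_\mathbb Q$ by rational PBW for the decomposition $\lie g[t]=(\lie n^-[t]\oplus\lie h)\oplus(t\lie h[t]\oplus\lie n^+[t])$ (both summands are subalgebras); this gives injectivity of $U_\mathbb Z/J_\mathbb Z\to U_\mathbb Q/J_\mathbb Q$, which is exactly what your reduction uses. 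With that line added, and with the statement read in its signed form, your proof is complete.
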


We now extend this result. Let $w,u_1,u_2,\dots$  be algebraically independent variables which will be used as formal variables. For any ring $R$, the formal power series ring in the variables $u_1,u_2,\dots$ will be denoted $R[[\mathbf u]]$. Similarly, the formal power series ring in the variables $w,u_1,u_2\dots$ will be denoted $R[[\mathbf u;w]]$. Set $S:=\mathbb C[[\mathbf{u};w]]$

Let $\alpha\in R^+$. We define $$X_\alpha(u):=\sum_{n=0}^{\infty} (x_\alpha^-\otimes t^{n})u^{n+1}=X_{\alpha;(0,1)}(u)\quad \text{ and } \quad H_\alpha(u):=\sum_{n=0}^{\infty} (h_\alpha\otimes t^{n})u^{n+1}.$$ In $U_\mathbb{Z}(\lie{g}[t])[[\mathbf{u}]]$ we define the formal series $Y_\alpha[s]$, for $s\geq 0$, by
$$
	Y_\alpha[s]=\text{Res}_w(G[s](w)X_\alpha(w^{-1}))
$$
where
\begin{equation*}
G[s](w)=\begin{cases}\frac{ 1}{1+u_1w+u_2w^2\cdots +u_sw^s},\text{ if }s\geq 1\\
1, \text{ if }s=0
\end{cases}
\end{equation*}
and $\text{Res}_w(f)$ denotes the coefficient of $w^{-1}$ in $f \in S$ (that is the residue at 0 with respect to the formal variable $w$) understood as a formal series in $w$ with coefficients in the polynomial ring $\mathbb C[u_1,\dots,u_s]$. In particular, if $s=0$ we have
$$
Y_\alpha[s]=\text{Res}_w(X_\alpha(w^{-1}))=\text{Res}_w\left (\sum_{i=0}^\infty (x_\alpha^-\otimes t^i)w^{-i-1}\right )=x_\alpha^-.
$$

Let us record some useful results in formal series. The first is an analogue of a well-known result from vertex operators for loop algebras. Let $\pi_{w}^-$ denote projection onto the negative powers in $w$. We have:
\begin{align*}
\left [ H_\alpha(w^{-1}),X_\alpha(z^{-1})\right ]&=\left [ \sum_{i=0}^\infty (h_\alpha\otimes t^i)w^{-i-1},\sum_{j=0}^\infty (x^-_\alpha\otimes t^j)z^{-j-1} \right]\\
&= \sum_{i=0}^\infty \sum_{j=0}^\infty\left [ h_\alpha\otimes t^i,x^-_\alpha\otimes t^j\right]w^{-i-1}z^{-j-1} \\
&= -2\sum_{i=0}^\infty \sum_{j=0}^\infty (x^-_\alpha\otimes t^{i+j}) w^{-i-1}z^{-j-1} \\
&= -2\sum_{k=0}^\infty \sum_{j=0}^k (x_\alpha^-\otimes t^{k})w^{-(k-j)-1}z^{-j-1} \\
&= -2\pi^-_w\left (\sum_{k=0}^\infty \sum_{j=0}^\infty (x_\alpha^-\otimes t^{k})w^{-(k-j)-1}z^{-j-1}\right )\\
&= -2\pi^-_w\left (\sum_{k=0}^\infty \left ( (x_\alpha^-\otimes t^{k})w^{-k-1}\sum_{j=0}^\infty w^{j}z^{-j-1} \right )\right )\\
&= \pi^-_w\left (X_\alpha(w^{-1})\frac{-2}{z-w} \right )
\end{align*}
with $\frac{1}{z-w}$ being considered a formal series in $w$.

Observe that, for $s\geq 1$ we have:
\begin{equation} \label{eq:Grecur}
G[s](w)=\frac{G[s-1](w)}{1+u_{s}w^{s}G[s-1](w)}. 
\end{equation}
from which we see that
\begin{equation} \label{eq:Gderiv}
\partial_{u_{s}}G[s](w)=-w^sG[s](w)^2.
\end{equation}

Let $f(u_1,\dots,u_s;w),g(u_1,\dots,u_s;w)\in S$. In the following, we suppress the notational dependence on $u_1,\dots,u_s$. We have: 
\begin{align}
	[\text{Res}_w(f(w)H_\alpha(w^{-1})),\text{Res}_w(g(w)X_\alpha(w^{-1}))]&=[\text{Res}_w(f(w)H_\alpha(w^{-1})),\text{Res}_z(g(z)X_\alpha(z^{-1}))]\label{eq:resbracket}\\
	&=\text{Res}_w(f(w)\text{Res}_z(g(z)[H_\alpha(w^{-1}),X_\alpha(z^{-1})]))\nonumber\\
	&=\text{Res}_w\left (f(w)\text{Res}_z\left(g(z)\pi_{w}^-\left (\frac{-2}{z-w}X_\alpha(w^{-1})\right )\right) \right )\nonumber\\
	&=-2\text{Res}_w\left (f(w)\pi_{w}^-\left (X_\alpha(w^{-1})\text{Res}_z\left(\frac{g(z)}{z-w}\right )\right) \right )\nonumber\\
	&=-2\text{Res}_w\left (f(w)\pi_w^-(g(w)X_\alpha(w^{-1})) \right )\nonumber\\
	&=-2\text{Res}_w\left (f(w)g(w)X_\alpha(w^{-1}) \right )\nonumber
\end{align}
where the first equation is a change of variables, in the second we used the linearity properties of the bracket and residue, in the third we used the previous result, and in the fifth we used a standard property of the residue. In the sixth equation, $\pi_w^-$ disappears in the residue since $f(w)$ contains only positive powers of $w$.

The main result we are going to prove in this subsection is:

\begin{prop}\label{prop:commutation}
	Let $\alpha\in R^+$, $k\geq 0$, and  $a_1,\dots,a_s\in \mathbb Z_{\geq 0}, s\ge 1$ such that $\sum_{i=1}^s a_i\leq k$. We have that
	\begin{gather*}(x_\alpha^+ \otimes t^{s})^{(a_s)} \cdots (x_\alpha^+ \otimes t^2)^{(a_2)}(x_\alpha^+ \otimes t)^{(a_1)} (x_\alpha^-)^{( k )} = \text{coeff}_{a_1,a_2,\dots,a_s} Y[s]^{(k-a_1-a_2-\cdots-a_{s})} \\ \hspace{9cm}\mod{U_\mathbb{Z}(\lie{g}[t])\left(U_\mathbb{Z}(t\mathfrak{h}[t])^0 + U_\mathbb{Z}(\mathfrak{n}^+[t])^0\right)}
	\end{gather*}
	where $\text{coeff}_{a_1,a_2,\dots,a_s} Y_\alpha[s]^{(k-a_1-a_2-\cdots-a_{s})}$ denotes the coefficient of $u_1^{a_1}u_2^{a_2}\cdots u_s^{a_{s}}$ in the respective series.
\end{prop}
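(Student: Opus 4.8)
The plan is to induct on $s$. Throughout I write $\lie{J}:=U_\mathbb{Z}(\lie{g}[t])\bigl(U_\mathbb{Z}(\lie{n}^+[t])^0+U_\mathbb{Z}(t\lie{h}[t])^0\bigr)$ for the left ideal in the statement, and I use repeatedly that the elements $x_\alpha^-\otimes t^i$ commute with one another. For the base case $s=1$ I would apply Lemma~\ref{basicrel} with $n=1$, $m=0$, $\ell=a_1$; what then remains is the purely formal identification of $(X_{\alpha;(0,1)}(u)^{(k-a_1)})_k$ with $\text{coeff}_{a_1}Y_\alpha[1]^{(k-a_1)}$, which drops out once one computes $Y_\alpha[1]=\text{Res}_w\bigl(G[1](w)X_\alpha(w^{-1})\bigr)=\sum_{i\ge 0}(x_\alpha^-\otimes t^i)(-u_1)^i$ straight from the definition of the residue and compares coefficients of divided powers.

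For the inductive step I would peel off the leftmost factor $(x_\alpha^+\otimes t^s)^{(a_s)}$. Since $\lie{J}$ is a \emph{left} ideal, the class modulo $\lie{J}$ of $(x_\alpha^+\otimes t^{s-1})^{(a_{s-1})}\cdots(x_\alpha^+\otimes t)^{(a_1)}(x_\alpha^-)^{(k)}$ may be replaced by the expression furnished by the inductive hypothesis; and since $(x_\alpha^+\otimes t^s)^{(a_s)}$ is free of $u_1,\dots,u_{s-1}$, left multiplication by it commutes with extracting the coefficient of $u_1^{a_1}\cdots u_{s-1}^{a_{s-1}}$. Writing $m:=k-a_1-\cdots-a_{s-1}$, this reduces the step to the single identity
\begin{equation*}
(x_\alpha^+\otimes t^s)^{(a)}\,Y_\alpha[s-1]^{(m)}\ \equiv\ \text{coeff}_{u_s^a}\bigl(Y_\alpha[s]^{(m-a)}\bigr)\pmod{\lie{J}},\qquad 0\le a\le m,\tag{$\star$}
\end{equation*}
both sides lying in $U_\mathbb{Z}(\lie{g}[t])\otimes_\mathbb{Z}\mathbb{Z}[u_1,\dots,u_{s-1}]$: feeding $a=a_s$ into $(\star)$, combining with the inductive hypothesis, and then taking $\text{coeff}_{u_1^{a_1}\cdots u_{s-1}^{a_{s-1}}}$ of both sides returns exactly $\text{coeff}_{a_1,\dots,a_s}Y_\alpha[s]^{(k-a_1-\cdots-a_s)}$, which is the Proposition.

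I would prove $(\star)$ by a second induction, on $a$; the case $a=0$ is immediate from $G[s]|_{u_s=0}=G[s-1]$. The engine of this induction is the congruence
\begin{equation*}
(x_\alpha^+\otimes t^s)\,Y_\alpha[s]^{(m')}\ \equiv\ \partial_{u_s}\bigl(Y_\alpha[s]^{(m'-1)}\bigr)\pmod{\lie{J}},\qquad m'\ge 1,
\end{equation*}
which I would prove by a direct reduction modulo $\lie{J}$: expand the divided power $Y_\alpha[s]^{(m')}$ and commute the single factor $x_\alpha^+\otimes t^s$ rightward through the commuting product of $x_\alpha^-\otimes t^i$'s using $[x_\alpha^+\otimes t^s,x_\alpha^-\otimes t^i]=h_\alpha\otimes t^{s+i}$ and $[h_\alpha\otimes t^{p},x_\alpha^-\otimes t^j]=-2\,x_\alpha^-\otimes t^{p+j}$. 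Every term that drives a factor of $\lie{n}^+[t]$ or of $t\lie{h}[t]$ to the extreme right then vanishes — and since every Cartan element produced along the way has strictly positive $t$-degree, no $h_\alpha\otimes t^0$ ever appears — so the only survivor is the ``double contraction'' $-\,Y_\alpha[s]^{(m'-2)}\sum_{i,j}c_ic_j\,(x_\alpha^-\otimes t^{s+i+j})$, where $G[s](w)=\sum_i c_iw^i$. Because $\sum_{i+j=k}c_ic_j$ is the coefficient of $w^k$ in $G[s](w)^2$, equation~\eqref{eq:Gderiv} in the form $w^sG[s](w)^2=-\partial_{u_s}G[s](w)$ rewrites $\sum_{i,j}c_ic_j(x_\alpha^-\otimes t^{s+i+j})$ as $-\partial_{u_s}Y_\alpha[s]$, and then the survivor collapses to $Y_\alpha[s]^{(m'-2)}\,\partial_{u_s}Y_\alpha[s]=\partial_{u_s}(Y_\alpha[s]^{(m'-1)})$. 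Granting this congruence, I would finish $(\star)$ by writing $(x_\alpha^+\otimes t^s)^{(a)}=\tfrac1a(x_\alpha^+\otimes t^s)(x_\alpha^+\otimes t^s)^{(a-1)}$, applying the $a$-hypothesis to the inner factor, moving $x_\alpha^+\otimes t^s$ inside $\text{coeff}_{u_s^{a-1}}$, using the congruence with $m'=m-a+1$, and noting that $\text{coeff}_{u_s^{a-1}}\circ\partial_{u_s}=a\cdot\text{coeff}_{u_s^{a}}$; cancelling the factor $a$ is legitimate in the torsion-free quotient $U_\mathbb{Z}(\lie{g}[t])/\lie{J}$ (a $\mathbb{Z}$-form argument, cf. Theorem~\ref{forms} and the triangular decomposition \eqref{PBWordering}).

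The hard part is the congruence $(x_\alpha^+\otimes t^s)Y_\alpha[s]^{(m')}\equiv\partial_{u_s}(Y_\alpha[s]^{(m'-1)})$: one must keep the rightward commutation genuinely under control modulo $\lie{J}$ — verifying that all the ``tail'' terms land in $\lie{J}$, and that the $t$-degree bookkeeping really does rule out any $h_\alpha\otimes t^0$ — and one must run the computation with divided powers, not ordinary powers, in order to stay characteristic-free. The point of \eqref{eq:Gderiv} (hence of the recursion \eqref{eq:Grecur}) is exactly that it converts the combinatorics of the double contraction into differentiation in $u_s$, which is what makes $G[s]$ the right bookkeeping device; the same computation can alternatively be cast entirely inside the residue calculus of Section~\ref{genGarland}, with \eqref{eq:resbracket} playing the role of the Lie bracket $[h_\alpha\otimes t^p,x_\alpha^-\otimes t^j]$.
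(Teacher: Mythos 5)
Your skeleton is essentially the paper's: the Proposition is reduced to the single congruence $(\star)$, i.e.\ $(x_\alpha^+\otimes t^s)^{(a)}Y_\alpha[s-1]^{(m)}\equiv\bigl(Y_\alpha[s]^{(m-a)}\bigr)_{u_s^a}$, which is exactly part (2) of the auxiliary lemma in Section~\ref{genGarland}, proved there as here by induction on the divided-power exponent with base case $G[s]\vert_{u_s=0}=G[s-1]$ from \eqref{eq:Grecur}, and with the ``engine'' $(x_\alpha^+\otimes t^s)Y_\alpha[s]^{(m')}\equiv\partial_{u_s}\bigl(Y_\alpha[s]^{(m'-1)}\bigr)$ being part (1) of that lemma; the division by $a$ via torsion-freeness of the quotient is also the same move the paper makes (``since $r+1\neq 0$''). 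The one genuinely different ingredient is your proof of the engine congruence by direct rightward commutation (single contractions die against $U_\mathbb{Z}(t\lie h[t])^0$, double contractions survive), whereas the paper proves it by induction on $m'$ using the residue-calculus bracket \eqref{eq:resbracket}; your route is fine --- the constants do work out, since $-2\binom{m'}{2}/m'!=-1/(m'-2)!$ turns the double-contraction sum into $-Y_\alpha[s]^{(m'-2)}\operatorname{Res}_w\bigl(w^sG[s](w)^2X_\alpha(w^{-1})\bigr)$, and \eqref{eq:Gderiv} converts this to $\partial_{u_s}\bigl(Y_\alpha[s]^{(m'-1)}\bigr)$ --- and it is arguably more transparent than the paper's induction, provided you carry the divided-power bookkeeping and check that the discarded tail terms are in the \emph{integral} ideal (they are: they assemble to $Y^{(m')}(x_\alpha^+\otimes t^s)$ and $Y^{(m'-1)}\sum_i c_i(h_\alpha\otimes t^{s+i})$).

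The concrete flaw is the base case $s=1$. The claimed ``purely formal identification'' of $\bigl(X_{\alpha;(0,1)}(u)^{(k-a_1)}\bigr)_k$ with $\operatorname{coeff}_{a_1}Y_\alpha[1]^{(k-a_1)}$ is false: by your own formula $Y_\alpha[1]=\sum_{i\ge 0}(x_\alpha^-\otimes t^i)(-u_1)^i$, the two expressions differ by the sign $(-1)^{a_1}$ (already for $k=2$, $a_1=1$ a direct computation gives $(x_\alpha^+\otimes t)(x_\alpha^-)^{(2)}\equiv -(x_\alpha^-\otimes t)$, matching $\operatorname{coeff}_{u_1}Y_\alpha[1]^{(1)}$ but not the unsigned Garland series as quoted in Lemma~\ref{basicrel}). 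So as written the base of your outer induction does not close. The fix is easy and removes the dependence on Lemma~\ref{basicrel} altogether: since $Y_\alpha[0]=x_\alpha^-$, the case $s=1$ is just $(\star)$ applied to $Y_\alpha[0]^{(k)}=(x_\alpha^-)^{(k)}$ (equivalently, start the induction at $s=0$, where the statement is trivial). This is in effect what the paper does: it never invokes Lemma~\ref{basicrel} in the proof, but obtains the Proposition by iterating part (2) of its lemma starting from $(x_\alpha^-)^{(k)}$.
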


It will be a direct consequence of the following lemma:

\begin{lem} Let $\alpha\in R^+$.
	
	\begin{enumerate}
		\item In $U_\mathbb{Z}(\lie{g}[t])[[{\bf u}]]$ we have, for $r\geq 1,s\geq 1$: 
$$
	[x_\alpha^+\otimes t^{s}, Y_\alpha[s]^{(r)}]=\partial_{u_s}Y_\alpha[s]^{(r-1)}
	\quad \mod{U_\mathbb{Z}(\lie{g}[t])[[{\bf u}]]\left(U_\mathbb{Z}(t\mathfrak{h}[t])^0[[{\bf u}]] +  U_\mathbb{Z}(\mathfrak{n}^+[t])^0[[{\bf u}]]\right)}.
$$

		\item In $U_\mathbb{Z}(\lie{g}[t])[[{\bf u}]]$ we have, for $r\geq 1, 0\leq n\leq r,s\geq 1$:
$$
			(x_\alpha^+\otimes t^{s})^{(n)} Y_\alpha[s-1]^{(r)}=\left(Y_\alpha[s]^{(r-n)}\right )_{n} \quad \mod{U_\mathbb{Z}(\lie{g}[t])[[{\bf u}]]\left(U_\mathbb{Z}(t\mathfrak{h}[t])^0[[{\bf u}]] + U_\mathbb{Z}(\mathfrak{n}^+[t])^0[[{\bf u}]]\right)}
$$
		where the subscript $n$ means the coefficient of $u_s^n$.
		
		\item For all $s\geq 1$ we have 
$$
		Y_\alpha[s]=\sum_{\eta \ : \ \ell(\eta)\leq s} (-1)^{\ell( \eta )}\frac{\ell(\eta)!}{\prod_{i=1}^sm_i(\eta)!}(x_\alpha^-\otimes t^{|\eta|})\prod_{i=1}^s u_i^{m_i(\eta)}.
$$
	\end{enumerate}
\end{lem}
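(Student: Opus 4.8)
The plan is to establish the three parts of the lemma essentially in reverse order, since part (3) is a closed-form evaluation that part (1) and part (2) can lean on once available. First I would prove part (3) by direct computation. Starting from the definition $Y_\alpha[s]=\mathrm{Res}_w(G[s](w)X_\alpha(w^{-1}))$, I expand the geometric series $G[s](w)=\bigl(1+u_1w+\cdots+u_sw^s\bigr)^{-1}=\sum_{\ell\geq 0}(-1)^\ell(u_1w+\cdots+u_sw^s)^\ell$ and apply the multinomial theorem to $(u_1w+\cdots+u_sw^s)^\ell$. A term indexed by a tuple $(m_1,\dots,m_s)$ with $\sum m_i=\ell$ contributes $\frac{\ell!}{\prod m_i!}\prod_i u_i^{m_i}\, w^{\sum i\,m_i}$, and writing $\eta$ for the partition whose part $i$ occurs with multiplicity $m_i=m_i(\eta)$ we have $\sum_i i\,m_i=|\eta|$, $\ell=\ell(\eta)\le s$. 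Multiplying by $X_\alpha(w^{-1})=\sum_{n\ge0}(x_\alpha^-\otimes t^n)w^{-n-1}$ and extracting the coefficient of $w^{-1}$ forces $n=|\eta|$, giving exactly the stated formula. This step is purely formal bookkeeping with power series.

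Next I would prove part (1). The key input is the residue bracket identity derived just before the proposition, namely $[\mathrm{Res}_w(f(w)H_\alpha(w^{-1})),\mathrm{Res}_w(g(w)X_\alpha(w^{-1}))]=-2\,\mathrm{Res}_w(f(w)g(w)X_\alpha(w^{-1}))$, together with Lemma \ref{basicrel} to pass from $x_\alpha^+\otimes t^s$ to the power-series level modulo the ideal $U_\mathbb{Z}(\lie{g}[t])\bigl(U_\mathbb{Z}(t\lie h[t])^0+U_\mathbb{Z}(\lie n^+[t])^0\bigr)$. Working modulo this ideal one replaces $(x_\alpha^+\otimes t^s)^{(\ell)}(x_\alpha^-\otimes t^m)^{(k)}$ by the appropriate residue expression; for the single power $\ell=1$ this lets me compute $[x_\alpha^+\otimes t^s, Y_\alpha[s]^{(r)}]$ as a sum over the $r$ factors of $Y_\alpha[s]$, each commutator producing a factor $\mathrm{Res}_w(w^s G[s](w) X_\alpha(w^{-1}))$. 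By \eqref{eq:Gderiv}, $\partial_{u_s}G[s](w)=-w^sG[s](w)^2$, so $w^sG[s](w)^2=-\partial_{u_s}G[s](w)$, and the Leibniz rule applied to $Y_\alpha[s]^{(r)}=\frac{1}{r!}(\mathrm{Res}_w(G[s](w)X_\alpha(w^{-1})))^r$ shows the commutator equals $\partial_{u_s}$ of $Y_\alpha[s]^{(r-1)}$, modulo the same ideal; one has to track the $1/r!$ versus $1/(r-1)!$ bookkeeping carefully, which is where the divided-power normalization matters.

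Then part (2) follows by induction on $n$ using part (1): $(x_\alpha^+\otimes t^s)^{(n)}$ acting on $Y_\alpha[s-1]^{(r)}$ builds up, through repeated application of the commutation in (1) together with the recursion \eqref{eq:Grecur} relating $G[s]$ to $G[s-1]$, the generating-function identity whose $u_s^n$-coefficient is the claimed expression; one checks the base case $n=0$ is the tautology $Y_\alpha[s-1]^{(r)}=\bigl(Y_\alpha[s]^{(r)}\bigr)_0$ coming from setting $u_s=0$ in $G[s]$, and the inductive step amounts to matching $\partial_{u_s}$ against the shift $n\mapsto n-1$ in the coefficient. Finally, Proposition \ref{prop:commutation} is obtained by iterating part (2): apply it first with $s=1$ to $(x_\alpha^-)^{(k)}=Y_\alpha[0]^{(k)}$, then with $s=2$, and so on up to $s$, at each stage peeling off one factor $(x_\alpha^+\otimes t^i)^{(a_i)}$ and accumulating the coefficient extraction in $u_i$; the constraint $\sum a_i\le k$ guarantees all the divided powers $Y_\alpha[i]^{(k-a_1-\cdots-a_i)}$ that appear have nonnegative superscript so the formulas in (1) and (2) apply. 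The main obstacle I anticipate is keeping the bookkeeping of divided powers, factorials, and the ideal $U_\mathbb{Z}(\lie{g}[t])\bigl(U_\mathbb{Z}(t\lie h[t])^0+U_\mathbb{Z}(\lie n^+[t])^0\bigr)$ consistent across the induction — in particular, verifying that commutators with the "error" terms landing in that ideal stay in the ideal after further multiplication, so that everything is genuinely well-defined modulo it; the formal-series manipulations themselves are routine, but the characteristic-free (integral form) setting means one cannot divide freely and must argue that each intermediate expression actually lies in $U_\mathbb{Z}$.
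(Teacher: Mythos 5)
Your proposal is correct and follows essentially the same route as the paper: part (3) by expanding $G[s](w)$ via the multinomial theorem and extracting the residue, part (1) from the residue bracket identity \eqref{eq:resbracket} together with $\partial_{u_s}G[s](w)=-w^sG[s](w)^2$, and part (2) by induction on $n$ with base case obtained by setting $u_s=0$ in $G[s]$. The only cosmetic difference is that you organize part (1) as a direct Leibniz expansion over the $r$ factors of $Y_\alpha[s]$ (pushing the resulting $H$-series into the ideal and letting their brackets with the remaining factors produce the $G[s](w)^2$ terms), whereas the paper runs an induction on $r$; both versions rest on the same divided-power and integrality bookkeeping that you rightly flag as the delicate point.
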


\begin{proof} 
	
 Throughout this proof, let $\mathcal{I}=U_\mathbb{Z}(\lie{g}[t])[[{\bf u}]]\left(U_\mathbb{Z}(t\mathfrak{h}[t])^0[[{\bf u}]] +  U_\mathbb{Z}(\mathfrak{n}^+[t])^0[[{\bf u}]]\right)$.
	\begin{enumerate}	
	\item We use induction on $r$. When $r=1$ we have
		\begin{equation*}
			[x_\alpha^+\otimes t^{s}, Y_\alpha[s]]=\left [x_\alpha^+\otimes t^{s},\text{Res}_w(G[s](w)X_\alpha(w^{-1}))\right ]=0 \mod \mathcal I.
		\end{equation*}
		
		We now assume that statement (1) of the lemma is true for $r$ and compute:
		\begin{align}
			[x_\alpha^+\otimes t^{s}, (r+1)Y_\alpha[s]^{(r+1)}]&=[x_\alpha^+\otimes t^{s}, Y_\alpha[s]Y_\alpha[s]^{(r)}]\label{eq:Ybracket}\\
			&=[x_\alpha^+\otimes t^{s},Y_\alpha[s]]Y_\alpha[s]^{(r)}+Y_\alpha[s][x_\alpha^+\otimes t^{s},Y_\alpha[s]^{(r)}]\nonumber
		\end{align}
		In the first term above we have
		\begin{align*}
			[x_\alpha^+\otimes t^{s},Y_\alpha[s]]Y_\alpha[s]^{(r)}&=[x_\alpha^+\otimes t^s,\text{Res}_w (G[s](w)X_\alpha(w^{-1}))] Y_\alpha[s]^{(r)}\\
			& =\text{Res}_w (w^sG[s](w)H_\alpha(w^{-1})) Y_\alpha[s]^{(r)}\\
			&=Y_\alpha[s]^{(r-1)}\left [\text{Res}_w (w^sG[s](w)H_\alpha(w^{-1})) ,Y_\alpha[s]\right ]\mod{\mathcal{I}}\\
			&=Y_\alpha[s]^{(r-1)}\text{Res}_w (-2w^sG[s](w)^2X_\alpha(w^{-1})) \mod{\mathcal{I}}\\
			&=Y_\alpha[s]^{(r-1)}\text{Res}_w (2(\partial_{u_{s}}G[s](w))X_\alpha(w^{-1})) \mod{\mathcal{I}}\\
			&=2Y_\alpha[s]^{(r-1)}\partial_{u_s}Y_\alpha[s]\mod{\mathcal{I}},
		\end{align*}
		where in the third equation we used induction on $r$, in the fourth we used (\ref{eq:resbracket}) and in the fifth we used (\ref{eq:Gderiv}). 
		
		In the second term of (\ref{eq:Ybracket}) we have:
		\begin{align*}
			Y_\alpha[s][x_\alpha^+\otimes t^{s},Y_\alpha[s]^{(r)}]&=Y_\alpha[s]\partial_{u_s}(Y_\alpha[s]^{(r-1)})\mod{\mathcal I}\\
			&=Y_\alpha[s]Y_\alpha[s]^{(r-2)}\partial_{u_s}(Y_\alpha[s])\mod{\mathcal I}\\
			&=(r-1)Y_\alpha[s]^{(r-1)}\partial_{u_s}Y_\alpha[s]\mod{\mathcal{I}}.
		\end{align*}
		Combining terms, we get
		$$
		(r+1)[x_\alpha^+\otimes t^{s}, Y_\alpha[s]^{(r+1)}]=(r+1)\partial_{u_s}Y_\alpha[s]^{(r)}\mod{\mathcal{I}}.
		$$
		Since $r+1\neq 0$ we see that $[x_\alpha^+\otimes t^{s}, Y_\alpha[s]^{(r+1)}]-\partial_{u_s}Y_\alpha[s]^{(r)} \in \mathcal{I}$.

		\item We use induction on $n$. For the $n=0$ case, we want to show
		$$
		Y_\alpha[s-1]^{(r)}=\left(Y_\alpha[s]^{(r)}\right )_{0}.
		$$

		We compute:
		\begin{align*}
		(Y_\alpha[s]^{(r)})_0&=\left.Y_\alpha[s]^{(r)}\right|_{u_s=0}\\
		&=\left.\text{Res}_w(G[s](w))X_\alpha(w^{-1}))^{(r)}\right|_{u_s=0}\\
		&=\left.\text{Res}_w\left (\frac{G[s-1](w)}{1+u_{s}w^{s}G[s-1](w)}X_\alpha(w^{-1})\right)^{(r)}\right|_{u_s=0}\\
		&=\text{Res}_w(G[s-1](w)X_\alpha(w^{-1}))^{(r)}\\
		&=Y_\alpha[s-1]^{(r)}
		\end{align*}
		using (\ref{eq:Grecur}) in the third equation.
		
		Now assume the statement is true for all $n<r$. We compute:
		\begin{align*}
			(n+1)(x_\alpha^+\otimes t^{s})^{(n+1)}Y_\alpha[s-1]^{(r)}&=(x_\alpha^+\otimes t^{s}) (x_\alpha^+\otimes t^{s})^{(n)}Y_\alpha[s-1]^{(r)}\\
			&=(x_\alpha^+\otimes t^{s}) \left(Y_\alpha[s]^{(r-n)}\right )_{n}\mod{\mathcal{I}}\\
			&=\left((x_\alpha^+\otimes t^{s}) Y_\alpha[s]^{(r-n)}\right )_{n}\mod{\mathcal{I}}\\
			&=(\partial_{u_{s}}Y_\alpha[s]^{(r-n-1)})_n\mod{\mathcal{I}}\\
			&=(n+1)(Y_\alpha[s]^{(r-n-1)})_{n+1}\mod{\mathcal{I}},
		\end{align*}
		where in the fourth equation, we use the result of (1) and in the fifth we use the ``power rule'' property of the formal derivative. Therefore, $(x_\alpha^+\otimes t^{s})^{(n+1)}Y_\alpha[s-1]^{(r)}-(Y_\alpha[s]^{(r-n-1)})_{n+1}\in \mathcal I$.

		\item We compute:
		\begin{align*}
			G[s](w)&=\frac{1}{1+u_1w+u_2w^2\cdots +u_sw^s}\\
			&=\sum_{n=0}^\infty (-1)^{n} (u_1w+u_2w^2\cdots +u_sw^s)^n\\
			&=\sum_{n=0}^\infty (-1)^{n}\sum_{\nu_1+\cdots\nu_s=n} \frac{n!}{\prod_{i=1}^{s}\nu_{i}!}\prod_{j=1}^s (u_jw^j)^{\nu_j}\\
			&=\sum_{\eta} (-1)^{\ell( \eta )}\frac{\ell(\eta)!}{\prod_{i=1}^sm_i(\eta)!}w^{|\eta|}\prod_{i=1}^s u_i^{m_i(\eta)}
		\end{align*}
		using multinomial expansion in the third equality, and where the sum is over all partitions $\eta$ with $\ell(\eta)\leq s$. The result then follows.\qedhere
	\end{enumerate}
\end{proof}

\section{Polynomials, divided power algebra, and Gr\"obner-Shirshov bases} \label{gsb}

In the Subsection \ref{s:hyper} we introduced the hyperalgebras. In order to get the main result of this paper, we will need to pass to the setting of polynomial rings and the divided power polynomial algebra, which we introduce now. Further, we recall the definition and main properties of Gr\"obner bases for polynomial rings and Gr\"obner-Shirshov bases for divided power polynomial algebra. For additional details we refer to \cite{AL,KL,KLLO}. 

\subsection{Polynomial rings and Gr\"obner bases}\label{Gbases}

Given a field $\mathbb K$ and $n\in \mathbb N$ we denote by $\mathbb K_n=\mathbb K[x_0,\dots,x_{n-1}]$ the polynomial ring in $n$ variables $x_0,\dots,x_{n-1}$. For any set of elements $S\subset \mathbb K_n$ we shall denote by $\langle S \rangle$ the ideal generated by $S$ in $\mathbb K_n$. Any element of the form $x_0^{a_0}\dots x_{n-1}^{a_{n-1}}$, with $(a_0,\dots,a_{n-1})\in \mathbb Z_+^n$, is called a monomial in $\mathbb K_n$.  Let $\prec$ be a monomial order in $\mathbb K_n$. Given $f \in \mathbb K_n$, we write $f=\sum_{i=0}^{k}c_im_i$, where $m_i$ are monomials in $\mathbb K_n$ and $c_i \in \mathbb K$ for each $i\in \{1,\dots,n\}$. We denote by $LM(f)$ the leading monomial of $f$ with respect to $\prec$, and, for any $S\subset \mathbb K_n$, we denote the ideal of leading terms of $S$ by $LM(S) = \langle LM(s) \mid s \in S \rangle$. Given an ideal $I \subset \mathbb K_n$, a set $G=\{ g_1,\dots,g_s \} \subset I$ is called a \textit{ Gr\"obner basis of $I$} if $LM(G)=LM(I)$. A Gr\"obner basis $G=\{ g_1,\dots,g_s \}$ is called a \textit{reduced Gr\"obner basis} if, for all $i\in \{1,\dots,s\}$, $g_i$ is monic and no nonzero term in $g_i$ is divisible by any $LM(g_j)$ for any $j\in \{1,\dots,s\}$, $j\ne i$. 

Similarly, we will denote by $\mathbb K_\infty$ the polynomial ring in infinitely many variables $x_i$, $i\in\mathbb Z_+$, and we naturally consider the same definitions of the previous paragraph.

We state two classical results which we shall use in this work. 

\begin{thm}[\cite{AL}] \label{buch} Fix an monomial order in $\mathbb K_n$. Let $I$ be a nonzero ideal in $\mathbb K_n$. 
	\begin{enumerate}[(a)]
		\item \label{buch1}  The ideal $I$ has a unique (finite) reduced Gr\"obner basis.
		\item \label{buch2}  The set of monomials in $\mathbb K_n$ which are not divisible by any of the leading terms of a Gr\"obner basis for $I$ forms a $\mathbb K$-basis of $\mathbb K_n/I$. \hfill\qedsymbol
	\end{enumerate}
\end{thm}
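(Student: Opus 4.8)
The plan is to establish part (\ref{buch2}) first and then deduce part (\ref{buch1}) from it, since the uniqueness of the reduced basis is most cleanly obtained once the standard‑monomial basis is in hand. The one structural fact I would invoke at the outset is that a monomial order $\prec$ on $\mathbb K_n$ is a well‑ordering of the set of monomials; this follows from Dickson's lemma, which simultaneously guarantees that the monomial ideal $LM(I)$ is generated by finitely many monomials and hence that $I$ admits a finite Gr\"obner basis at all (take monic $g_1,\dots,g_s\in I$ whose leading monomials generate $LM(I)$).

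For (\ref{buch2}), fix a Gr\"obner basis $G=\{g_1,\dots,g_s\}$ and let $B$ be the set of monomials of $\mathbb K_n$ divisible by no $LM(g_i)$, equivalently not lying in $LM(I)=LM(G)$. To see that the image of $B$ spans $\mathbb K_n/I$, I would run the multivariate division algorithm: given $0\ne f\in\mathbb K_n$, if $LM(f)$ is divisible by some $LM(g_i)$ subtract the appropriate monomial multiple of $g_i$ to cancel that term; otherwise move the leading term of $f$ into an accumulating remainder and continue with $f$ minus its leading term. Each step strictly lowers, with respect to $\prec$, the leading monomial of the part still to be processed, so by the well‑ordering the process terminates, and it expresses $f$ modulo $I$ as a $\mathbb K$‑linear combination of monomials in $B$. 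For linear independence, suppose $h=\sum_{m\in B'}c_m m\in I$ with $B'\subseteq B$ finite and some $c_m\ne 0$; then $LM(h)\in B'\subseteq B$, while also $LM(h)\in LM(I)$ since $h\in I$, contradicting $B\cap LM(I)=\emptyset$.

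For (\ref{buch1}), I would use that a monomial ideal has a unique minimal finite monomial generating set; write $\{u_1,\dots,u_r\}$ for that of $LM(I)$, so no $u_i$ divides another $u_j$. Applying the division algorithm of the previous paragraph to any monic element of $I$ with leading monomial $u_i$ produces $g_i\in I$ with $LM(g_i)=u_i$ and $g_i-u_i$ a $\mathbb K$‑linear combination of standard monomials (elements of $B$); such a $g_i$ is unique, for if $g_i'$ is another then $g_i-g_i'\in I$ is a combination of standard monomials, hence zero by the independence just proved. Then $\{g_1,\dots,g_r\}$ is a Gr\"obner basis whose leading monomials generate $LM(I)$, and it is reduced by construction. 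Conversely, any reduced Gr\"obner basis must have exactly $\{u_1,\dots,u_r\}$ as its set of leading monomials — reducedness forbids one leading monomial from dividing another, while they must still generate $LM(I)$, so they are precisely the minimal generators — and then reducedness forces its $i$‑th element to be of the form $u_i+(\text{combination of standard monomials})$, hence equal to $g_i$ by the uniqueness above; this yields both existence and uniqueness.

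The step I expect to be the real obstacle in a genuinely self‑contained treatment is the well‑ordering/termination input: proving Dickson's lemma (equivalently, that every monomial ideal in $\mathbb K_n$ is finitely generated and that $\prec$ has no infinite strictly descending chain), which underlies both the existence of a finite Gr\"obner basis and the termination of the division algorithm. Everything after that — the division algorithm itself, the leading‑term argument for independence, and the bookkeeping for uniqueness — is routine, and since the result is classical we simply cite \cite{AL} for these details.
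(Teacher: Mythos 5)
The paper offers no proof of this statement --- it is quoted from \cite{AL} with only a citation --- and your argument is exactly the standard textbook proof found there (Dickson's lemma/well-ordering for termination and finiteness, the division algorithm for spanning by standard monomials, the leading-monomial argument for linear independence, and the unique minimal monomial generating set of $LM(I)$ for existence and uniqueness of the reduced basis), so it is correct in substance. One small repair: literally running the division algorithm on a monic $f\in I$ returns remainder $0$ (every element of $I$ reduces to zero modulo a Gr\"obner basis), so to construct $g_i$ you should instead set $g_i=u_i-\mathrm{NF}(u_i)$, i.e.\ reduce only the monomial $u_i$ (equivalently, only the tail of $f$), after which your uniqueness and reducedness arguments go through verbatim.
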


Given a polynomial ring $\mathbb K[x_1,\dots,x_n,y_1,\dots,y_m]=:\mathbb K[\{X\},\{Y\}]$ in which the variables are split into two subsets $X=\{x_1,\dots,x_n\}$ and $Y=\{y_1,\dots,y_m\}$, a monomial ordering ``eliminating the variables'' $\{x_1,\dots,x_n\}$ is a monomial ordering for which two monomials are compared by first comparing the variables $\{x_1,\dots,x_n\}$, and, in case of equality only, considering the variables $\{y_1,\dots,y_m\}$. This implies that a monomial containing a variable from $X$ is greater than every monomial independent of the variables from $X$. The next theorem simplifies some computations in $G\cap \mathbb K[y_1,\dots,y_m]$:

\begin{thm}\cite[Elimination Theorem]{AL}   \label{elim} If $G$ is a Gr\"obner basis of an ideal $I\subseteq \mathbb K[\{X\},\{Y\}]$ for an elimination monomial ordering, then $G\cap \mathbb K[y_1,\dots,y_m]$ is a Gr\"obner basis of the  \textit{elimination ideal} $I\cap \mathbb K[y_1,\dots,y_m]$. Moreover, a polynomial belongs to $G\cap \mathbb K[y_1,\dots,y_m]$ if, and only if, its leading term belongs to $G\cap \mathbb K[y_1,\dots,y_m]$. \hfill \qedsymbol
\end{thm}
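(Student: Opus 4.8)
The plan is to reduce everything to a single observation about elimination orderings: for any $f\in\mathbb K[x_1,\dots,x_n,y_1,\dots,y_m]$, one has $f\in\mathbb K[y_1,\dots,y_m]$ if and only if $LM(f)\in\mathbb K[y_1,\dots,y_m]$. Indeed, if $f$ had a nonzero term whose monomial involves one of the variables $x_1,\dots,x_n$, then by the defining property of an elimination ordering that monomial is $\succ$ every monomial independent of $x_1,\dots,x_n$, hence in particular $\succ$ every monomial of $f$ lying in $\mathbb K[y_1,\dots,y_m]$; thus $LM(f)$ would also involve some $x_i$. The contrapositive gives one implication, and the converse is immediate. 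This already yields the ``moreover'' clause: since $G\subseteq I$, an element $g\in G$ lies in $G\cap\mathbb K[y_1,\dots,y_m]$ precisely when $LM(g)\in\mathbb K[y_1,\dots,y_m]$.

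For the main assertion, write $J:=I\cap\mathbb K[y_1,\dots,y_m]$ and $G':=G\cap\mathbb K[y_1,\dots,y_m]$, and equip $\mathbb K[y_1,\dots,y_m]$ with the restriction of the given ordering; this is again a monomial ordering, and the leading monomial of an element of $\mathbb K[y_1,\dots,y_m]$ is the same whether computed there or in $\mathbb K[x_1,\dots,x_n,y_1,\dots,y_m]$. First, $G'\subseteq J$, so $LM(G')\subseteq LM(J)$ as ideals of $\mathbb K[y_1,\dots,y_m]$, and it remains to prove the reverse inclusion. Let $f\in J$ be nonzero. Then $f\in I$, and since $G$ is a Gr\"obner basis of $I$ there exists $g\in G$ with $LM(g)$ dividing $LM(f)$. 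By the observation, $LM(f)$ is a monomial in the $y_j$'s alone, hence so is its divisor $LM(g)$; applying the observation once more, now to $g$, gives $g\in\mathbb K[y_1,\dots,y_m]$, i.e.\ $g\in G'$. Therefore $LM(f)\in LM(G')$, which proves $LM(J)\subseteq LM(G')$. Combining the two inclusions, $LM(G')=LM(J)$, i.e.\ $G'$ is a Gr\"obner basis of the elimination ideal $J=I\cap\mathbb K[y_1,\dots,y_m]$.

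I do not anticipate a genuine obstacle: the proof is short, and its only substantive ingredient is the leading-monomial characterization of membership in $\mathbb K[y_1,\dots,y_m]$, which is precisely where the defining property of elimination orderings enters. The one point worth making explicit is that passing to the restricted ordering on $\mathbb K[y_1,\dots,y_m]$ does not alter leading monomials, so that the Gr\"obner-basis condition $LM(G')=LM(J)$ may be checked intrinsically in the smaller polynomial ring.
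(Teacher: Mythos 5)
Your proof is correct: the paper states this Elimination Theorem as a cited result from \cite{AL} with no proof of its own, and your argument is precisely the standard textbook one — the key observation that, under an ordering eliminating $\{x_1,\dots,x_n\}$, a polynomial lies in $\mathbb K[y_1,\dots,y_m]$ if and only if its leading monomial does, combined with divisibility of $LM(f)$ by some $LM(g)$, $g\in G$, and the harmless restriction of the ordering to $\mathbb K[y_1,\dots,y_m]$. Your reading of the ``moreover'' clause (membership of $g\in G$ in $G\cap\mathbb K[y_1,\dots,y_m]$ being detected by whether $LM(g)$ involves only the $y_j$'s) is the intended one, so there is nothing to add.
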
 

\begin{rem} The lexicographical orderings such that $x_1>\cdots >x_n$ and $x_n>\dots>x_1$ are  elimination orderings for every partition $\{x_{1},\ldots ,x_{k}\}\cup\{x_{k+1},\ldots ,x_{n}\}$  but the crucial difference for each partition of this type is that the first order is eliminating $\{x_{1},\ldots ,x_{k}\}$ and the second eliminates $\{x_{k+1},\ldots ,x_{n}\}$.
\end{rem}

\subsection{The divided power polynomial algebra and reduction modulo a prime \textit{p}}

Suppose $\mathsf{char}(\mathbb K) = 0$, let $\mathbb Z^\mathcal D_n  \subseteq \mathbb K_n$ be the divided power polynomial algebra in $n$ variables $x_0,\dots,x_{n-1}$ over $\mathbb Z$, i.e  the $\mathbb Z$-algebra generated by 
$$\left\{(x_i)^{(a_i)}:=\frac{x_i^{a_i}}{a_i!} \mid i=0,\dots,n-1, \ a_i\in \mathbb Z_+\right\}.$$  For any set of elements $S\subset \mathbb Z^\mathcal D_n$ we also denote by $\langle S \rangle$ the ideal generated by $S$ in $\mathbb Z^\mathcal D_n$. Any element of the form $x_0^{(a_0)}\dots x_{n-1}^{(a_{n-1})}$, with $(a_0,\dots,a_{n-1})\in \mathbb Z_+^n$, will also be called a monomial in $\mathbb Z^\mathcal D_n$. Further, any monomial order $\prec$ in $\mathbb K_n$ induces a monomial order in $\mathbb Z^\mathcal D_n$ in a obvious way and we keep denoting this induced order by $\prec$. Similarly, we also define $\mathbb Z^\mathcal D_\infty  \subseteq \mathbb K_\infty$.

The reduction of $\mathbb Z^\mathcal D_n$ modulo a prime $p$ is just the change of scalars of $\mathbb Z^\mathcal D_n$ by an algebraically closed field  $\mathbb F$ of characteristic $p$, that is  the tensor product $\mathbb Z^\mathcal D_n \otimes_\mathbb Z \mathbb F =:\mathcal D\mathbb F_n$ as $\mathbb Z$-modules. Equivalently, we can define $\mathcal D\mathbb F_n$ as the commutative algebra quotient $\mathbb F[x_i^{(k)}|0\leq i \leq n-1, k\geq 0]/I$, where
$$
I=\left \langle x_i^{(j)}x_i^{(k)}-\binom{j+k}{j}x_i^{(j+k)}|0\leq i \leq n-1, j,k\geq 0\right \rangle,
$$
which allows us to consider a similar Gr\"obner bases theory for $\mathcal D\mathbb F_n$. In fact it is known as Gr\"obner-Shirshov theory (for historical reasons) and we present it the next subsection.

\begin{rem} One of the most relevant theoretical difference between $\mathbb F_n$ and $ \mathcal D\mathbb F_n$ comes from the fact that $ \mathcal D\mathbb F_n$ is not Noetherian. 
\end{rem}

\subsection{Monomials associated to partitions} \label{partitions-monomials}

Following Subsection \ref{partitions}, we let $m_i(\lambda)$ denote the number of parts in $\lambda$ which are exactly equal to $i$. 
In certain cases, abusing notation, we may allow $\lambda$ to have a specified number of parts equal to $0$. In this case, is clear that for any partition $\lambda$ with $\ell(\lambda)=k\leq m-1$, we may associate the monomials $x^{(\lambda)}=x_0^{(m_0(\lambda))}x_1^{(m_1(\lambda))}\cdots x_k^{(m_k(\lambda))}\in \mathcal D\mathbb F_m$ and $x^{\lambda}=x_0^{m_0(\lambda)}x_1^{m_1(\lambda)}\cdots x_k^{m_k(\lambda)}\in \mathbb F_m$.  

\subsection{Gr\" obner-Shirshov bases}

Let $X=\{x_1,x_2,\dots\}$ be an enumerable set and let $X^*$ be the free monoid of associative monomials on $X$.
Fix a monomial order $\prec$ on $X^*$ and let $\mathbb F_X$ be the free associative algebra generated by $X$ over a field $\mathbb F$. Given a nonzero element $p\in \mathbb F_X$, we denote by $LM(p)$ the maximal monomial appearing in $p$ under the ordering $\prec$. In this case, $p=\alpha LM(p)+\sum_i \beta_iw_i$ with $\alpha,\beta_i\in \mathbb F$, $w_i\in X^*$, $\alpha\ne 0$ and $w_i\prec LM(p)$ for all $i$. If $\alpha = 1$, $p$ is said to be \textit{monic}.

Let $(S,T)$ be a pair of subsets of $\mathbb F_X$. Denote by $\langle S \rangle$ the ideal generated by $S$ in $\mathbb F_X$, and by $I_T$ the ideal of $\frac{\mathbb F_X}{\langle S\rangle}$ generated by the image of $T$ in $\frac{\mathbb F_X}{\langle S\rangle}$. We say that the algebra $A=\frac{\mathbb F_X}{\langle S\rangle}$ is \textit{defined by $S$} and the $A$-module $M=\frac{A}{I_T}$ is \textit{defined by the pair} $(S,T)$. 

A monomial $u\in X^*$ is said to be \textit{$(S,T)$-reduced} if $u\neq f LM(s)$ and $u\neq g LM(t)$ for any $s\in S$, $t\in T$ and $f,g\in X^*$. Otherwise, the monomial $u$ is said to be \textit{$(S,T)$-reducible}.

Let $p,q \in \mathbb F_X$, we define the composition of $p$ and $q$ as the polynomial $$S(p,q)=\frac{w_{p,q}}{LT(p)}p-\frac{w_{p,q}}{LT(q)}q$$
where $w_{p,q}=lcm(LM(p),LM(q))$. We say that $p,q \in \mathbb F_X$ are \textit{congruent with respect to the pair $(S,T)$}, and denote $p\equiv q \mod (S,T)$ if $p-q = \sum_i \alpha_i a_i s_i + \sum_j \beta_j b_j t_j$, where $\alpha_j, \beta_j\in \mathbb F$, $a_i, b_j\in X^*$, $s_j \in S$, $t_j\in T$, $a_iLT(s_i)\prec w_{p,q}$, and $b_jLT(t_j)\prec w_{p,q}$. When $T=\emptyset$, we simply write $p\equiv q \mod (S)$.

A pair $(S,T)$ of subsets of monic elements of $\mathcal A_X$ is called a \textit{Gr\"obner-Shirshov pair} if $S(p,q)\equiv 0 \mod (S)$ for any $p,q\in S$, $S(p,q)\equiv 0 \mod (S,T)$ for any $p,q\in T$, and $S(p,q)\equiv 0 \mod (S,T)$ for any $p\in S$ and $q\in T$.

\begin{thm}[\cite{KL,KLLO}] \label{gs} Let $(S,T)$ be a pair of subsets of monic elements in $\mathcal A_X$. Let $A=\mathcal A_X/\langle S\rangle$ be the associative algebra defined by $S$ and let $M=A/I_T$ be the $A$-module defined by $(S,T)$.
	
	\begin{enumerate}[(a)]
		\item The pair $(S,T)$ can be completed to a Gr\"obner-Shirshov pair $(\mathcal S,\mathcal T)$ for the $A$-module $M$.
	
		\item \label{gs2} If $(S,T)$ is a Gr\"obner-Shirshov pair for the $A$-module $M$, then the set of $(S,T)$-reduced monomials forms a linear basis of $M$.\hfill \qedsymbol
	\end{enumerate}
\end{thm}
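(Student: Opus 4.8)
The plan is to derive this from the non-commutative \emph{Composition--Diamond Lemma}; parts (a) and (b) are precisely the two halves of that statement, adapted to a module over an algebra presented by generators and relations. Throughout, I would view $M$ as the vector space $\mathbb F_X/\big(\langle S\rangle + \mathbb F_X T\big)$: the left ideal $I_T$ of $A=\mathbb F_X/\langle S\rangle$ pulls back to the left ideal $\mathbb F_X T$ modulo the two-sided ideal $\langle S\rangle$. To the data $(S,T)$ I attach the rewriting system on $X^*$ in which each $s\in S$ licenses the replacement $f\,LM(s)\,f' \rightsquigarrow f\,(LM(s)-s)\,f'$ for all $f,f'\in X^*$, and each $t\in T$ licenses $g\,LM(t) \rightsquigarrow g\,(LM(t)-t)$ for all $g\in X^*$, applicable only as a suffix, reflecting that $I_T$ is a left ideal. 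Since each replacement rewrites a monomial into an $\mathbb F$-linear combination of strictly $\prec$-smaller monomials and $\prec$ is a well-ordering on $X^*$, the system is terminating; the $(S,T)$-reduced monomials are exactly its normal forms.

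For part (b), assume $(S,T)$ is a Gr\"obner--Shirshov pair. Spanning is immediate from termination: iterating the rules on an arbitrary monomial $u$ writes it, modulo $\langle S\rangle+\mathbb F_X T$, as a linear combination of reduced monomials. The substance is linear independence, which is equivalent to \emph{confluence} of the rewriting system: if every monomial has a unique normal form, then the reduced monomials are a basis of $M$. By Newman's Lemma, confluence reduces to local confluence, and local confluence can fail only at an overlap ambiguity — a monomial $w$ on which two rules apply with leading monomials sharing letters (inclusion of $LM(p)$ in $LM(q)$, a proper overlap of $LM(p)$ and $LM(q)$, and the mixed $S$-versus-$T$ cases governed by the suffix condition on $T$-rules). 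For each such $w$, the difference of the two one-step reducts is, up to left/right multiplication by monomials, exactly the composition $S(p,q)$; the hypothesis $S(p,q)\equiv 0\bmod(S,T)$ (resp.\ $\bmod(S)$ when $p,q\in S$) says precisely that this difference rewrites to $0$, so the two reducts have a common successor. Hence the system is confluent, and (b) follows.

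For part (a), I would run the non-commutative Buchberger / Knuth--Bendix completion: starting from $(S,T)$, form all compositions $S(p,q)$ over pairs from $S$, pairs from $T$, and mixed pairs $p\in S$, $q\in T$; reduce each modulo the current pair; whenever a reduction leaves a nonzero remainder $h$, normalize $h$ to be monic and adjoin it to $S$ (if $p,q\in S$) or to $T$ (otherwise), and repeat, transfinitely if necessary. Adjoining $h$ changes neither $A$ nor $M$, since $h\in\langle S\rangle$ (resp.\ $h\in\langle S\rangle+\mathbb F_X T$) already; the procedure is monotone, so it has a limit $(\mathcal S,\mathcal T)$ with $\mathcal S\supseteq S$, $\mathcal T\supseteq T$, in which every composition — including those involving newly adjoined elements — reduces to $0$. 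A standard argument verifies that the limit is actually attained (every composition that ever arises is eventually killed) and that the quotients are preserved throughout, so $(\mathcal S,\mathcal T)$ is a Gr\"obner--Shirshov pair for $M$.

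The main obstacle is the confluence argument in part (b): it demands a complete and correct enumeration of the overlap types, with particular care for the asymmetry introduced by $T$ — whose rules act only on the left, so that, for instance, an $S$-pattern may sit strictly inside the ``$g$'' portion of a $g\,LM(t)$ pattern — together with the bookkeeping that identifies each overlap difference with a composition $S(p,q)$ and then pushes the resulting congruence down through the terminating system by induction on $\prec$. By comparison the completion in (a) is routine, the only subtleties being the transfinite iteration and the step-by-step check that $A$ and $M$ are unchanged.
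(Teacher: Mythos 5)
The paper does not prove this statement at all: Theorem \ref{gs} is imported verbatim from \cite{KL,KLLO} and stated with only a citation, so there is no in-paper argument to compare against. Your outline is essentially the standard Composition--Diamond/completion proof given in those references (termination from the monomial well-order, local confluence at overlap ambiguities identified with the compositions $S(p,q)$, Newman's Lemma for part (b), and Buchberger--Knuth--Bendix style completion for part (a)), and it is correct as a sketch, with the usual routine details (full enumeration of $S$--$S$, $S$--$T$, $T$--$T$ ambiguity types and the induction on $\prec$ that converts the congruence condition into common reducts) left to be filled in exactly as in \cite{KL,KLLO}.
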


\section{Graded Local Weyl modules}\label{mods}

We now recall the definition of graded local Weyl modules for hyper current algebras.  We refer to \cite{BMM,Csurvey,JMsurvey} for details on finite-dimensional representations, Weyl modules and related topics recently developed.

\

\textit{During this section, $\mathbb F$ will always denote an algebraically closed field of any characteristic.
}
\begin{defn}\label{weyl} Given $\lambda = \sum_{i=1}^r m_i \omega_i \in P^+$, the graded local Weyl module $W_\mathbb F(\lambda)$ is the $U_\mathbb F(\lie g[t])$-module generated by the element $v_\lambda$ with defining relations
	\begin{equation}\label{defrel}
	(x_\alpha^+\otimes t^r)^{(s)} v_\lambda= \Lambda_{\alpha_i,s}  v_\lambda  =  h- \lambda(h) v_\lambda=(x_\alpha^-)^{(k)}v_\lambda=0, 
	\end{equation}
	\text{for all} $h\in U_\mathbb F(\lie h),\ \alpha\in R^+, \  i\in I, \ r\ge 0,\ s>0,\ k>\lambda(h_\alpha)$.
\end{defn}

In other words, defining $\mathcal R$ as the left ideal of $U_\mathbb F(\lie g[t])$ generated by \begin{equation}\label{defrelideal}
(x_\alpha^+\otimes t^r)^{(s)}, \quad \Lambda_{\alpha_i,s},  \quad h- \lambda(h), \quad (x_\alpha^-)^{(k)}, 
\end{equation}
\text{for all} $h\in U_\mathbb F(\lie h),\ \alpha\in R^+, \  i\in I, \ r\ge 0,\ s>0,\ k>\lambda(h_\alpha)$, then 
\begin{equation}\label{defrelideal1}
W_\mathbb F(\lambda) = \frac{U_\mathbb F(\lie g[t])}{\mathcal R}.
\end{equation}

\begin{rem}
In the case $\mathbb F=\mathbb C$, we simply denote $W_\mathbb C(\lambda)=W(\lambda)$ following the traditional notation for the graded local Weyl module for the current algebra $\lie g\otimes \mathbb C[t]$.
\end{rem}

We recall the main result on local Weyl modules, that they are universal modules in the category of finite-dimensional $U_\mathbb F(\lie g[t])$-modules:

\begin{thm} \cite[Theorem 3.3.4]{BMM} For all $\lambda \in P^+$, the modules $W_\mathbb F(\lambda)$ are finite-dimensional and indecomposable. Moreover, any graded finite-dimensional $U_\mathbb F(\lie g[t])$-module generated by a vector $v_\lambda$ satisfying the relations
	$(x_\alpha^+\otimes t^r)^{(s)} v_\lambda  = (h- \lambda(h))v_\lambda=\Lambda_{\alpha_i,s} v_\lambda = 0, s\geq 1$, is a quotient of $W_\mathbb F(\lambda)$. \hfill \qedsymbol
\end{thm}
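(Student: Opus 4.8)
The plan is to prove the three assertions in order, beginning with \emph{universality}, which is quickest and feeds into the rest. Let $M$ be a graded finite-dimensional $U_\mathbb F(\lie g[t])$-module generated by a vector $v$ with $(x_\alpha^+\otimes t^r)^{(s)}v=(h-\lambda(h))v=\Lambda_{\alpha_i,s}v=0$ for all $\alpha\in R^+$, $i\in I$, $h\in U_\mathbb F(\lie h)$, $r\geq 0$ and $s\geq 1$. Comparing with Definition \ref{weyl}, the only defining relation of $W_\mathbb F(\lambda)$ not yet imposed on $v$ is $(x_\alpha^-)^{(k)}v=0$ for $k>\lambda(h_\alpha)$, and I would derive it from the finiteness of $M$. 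Taking $r=0$ gives $U_\mathbb F(\lie n^+)^0v=0$, so $v$ is a highest weight vector of weight $\lambda$ for $U_\mathbb F(\lie g)=U_\mathbb F(\lie g\otimes 1)$; restricting $M$ to the copy of $U_\mathbb F(\lie{sl}_2)$ attached to a fixed $\alpha\in R^+$, the submodule generated by $v$ is a finite-dimensional highest weight module of highest weight $\lambda(h_\alpha)$, hence a quotient of the $SL_2$-Weyl module $\Delta(\lambda(h_\alpha))$, on which $(x_\alpha^-)^{(k)}$ kills the highest weight line whenever $k>\lambda(h_\alpha)$ (characteristic-free $\lie{sl}_2$-theory; cf.\ \cite{BMM}). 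Thus all defining relations of $W_\mathbb F(\lambda)$ hold for $v$, and $v_\lambda\mapsto v$ induces the required surjection $W_\mathbb F(\lambda)\twoheadrightarrow M$.

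For \emph{finite-dimensionality}, I would first reduce to $W_\mathbb F(\lambda)=U_\mathbb F(\lie n^-[t])v_\lambda$ using the triangular factorization \eqref{PBWordering} together with $U_\mathbb F(\lie n^+[t])^0v_\lambda=0$, $U_\mathbb F(t\lie h[t])^0v_\lambda=0$ and $U_\mathbb F(\lie h)v_\lambda=\mathbb F v_\lambda$. This module has weights in $\lambda-Q^+$ and a compatible $\mathbb Z_+$-grading by $t$-degree; a $U_\mathbb F(\lie n^-[t])$-monomial of $\lie h$-weight $\mu=\lambda-\sum_j k_j\beta_j$ has $\sum_j k_j$, hence its number of factors, bounded in terms of $\lambda$, so it remains only to bound the $t$-exponents. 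For each $\alpha\in R^+$ I would apply Garland's identity (Lemma \ref{basicrel}) to $v_\lambda$: the left ideal $U_\mathbb Z(\lie g[t])\bigl(U_\mathbb Z(\lie n^+[t])^0+U_\mathbb Z(t\lie h[t])^0\bigr)$ annihilates $v_\lambda$, so the congruence there becomes an identity in $W_\mathbb F(\lambda)$; feeding in $(x_\alpha^-)^{(k)}v_\lambda=0$ for $k>\lambda(h_\alpha)$ and extracting coefficients of powers of $u$ yields first $(x_\alpha^-\otimes t^{r})v_\lambda=0$ for $r\geq\lambda(h_\alpha)$, and then --- iterating Lemma \ref{basicrel}, or invoking Proposition \ref{prop:commutation} directly --- relations that rewrite any monomial containing a divided power $(x_\alpha^-\otimes t^{r})^{(k)}$ with $r$ or $k$ above a bound depending only on $\lambda(h_\alpha)$ as a combination of monomials of strictly smaller total $t$-degree. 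Running these reductions over the finitely many $\alpha\in R^+$ (watching the PBW ordering of the factors) leaves a finite spanning set, so $\dim_\mathbb F W_\mathbb F(\lambda)<\infty$. For $\lie g=\lie{sl}_2$ one has $U_\mathbb F(\lie n^-[t])\cong\mathcal D\mathbb F_\infty$, and this reduction is precisely what the Gr\"obner--Shirshov machinery of Section \ref{gsb} turns into an explicit (indeed optimal) basis.

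Finally, \emph{indecomposability}: by the previous step $W_\mathbb F(\lambda)$ is a weight module generated by $v_\lambda$, and every $U_\mathbb F(\lie n^-[t])$-monomial of positive $t$-degree strictly lowers the $\lie h$-weight, so the $\lambda$-weight space is exactly $\mathbb F v_\lambda$. If $W_\mathbb F(\lambda)=M_1\oplus M_2$ as $U_\mathbb F(\lie g[t])$-modules, then each $M_i$ is $U_\mathbb F(\lie h)$-stable, hence a sum of weight spaces, so $\mathbb F v_\lambda=(M_1)_\lambda\oplus(M_2)_\lambda$ forces $v_\lambda\in M_1$ (say); then $W_\mathbb F(\lambda)=U_\mathbb F(\lie g[t])v_\lambda\subseteq M_1$ and $M_2=0$. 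The one genuinely delicate point is the $t$-degree bound in the finite-dimensionality argument: the Garland-type reductions must be carried out in the correct order and with careful bookkeeping of the divided powers and of the augmentation-ideal congruences, so that each rewriting strictly decreases the total $t$-degree and the process terminates; universality and indecomposability are essentially formal.
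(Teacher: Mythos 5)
First, note that the paper itself does not prove this statement: it is quoted from \cite[Theorem 3.3.4]{BMM}, so there is no internal proof to compare against. Your sketch follows the standard route of that reference (and of Chari--Pressley adapted to hyperalgebras): universality by checking the remaining defining relation of Definition \ref{weyl} via characteristic-free $\lie{sl}_2$-theory, finite-dimensionality via the triangular decomposition \eqref{PBWordering} plus Garland-type identities (Lemma \ref{basicrel}, Proposition \ref{prop:commutation}), and indecomposability from the one-dimensionality of the $\lambda$-weight space. The universality and indecomposability parts are essentially correct as outlined.

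Two points in the finite-dimensionality argument are stated in a way that would not survive as written. First, the relations you extract from Lemma \ref{basicrel} and Proposition \ref{prop:commutation} are \emph{homogeneous} in $t$-degree (the graded Weyl module is a graded quotient, and each coefficient of $Y[s]^{(k-\ell(a))}$ has fixed total $t$-degree), so no relation can rewrite a monomial as a combination of monomials of ``strictly smaller total $t$-degree''; the reduction has to be organized within each graded component, e.g.\ by induction on the largest $t$-exponent occurring or with respect to a monomial order, concluding that all sufficiently high graded pieces of $U_\mathbb F(\lie n^-[t])v_\lambda$ vanish. (Carrying this out in positive characteristic, where killing $(x_\alpha^-\otimes t^r)v_\lambda$ does not kill the divided powers $(x_\alpha^-\otimes t^r)^{(k)}$, is exactly the nontrivial bookkeeping that the cited proof in \cite{BMM} performs and that the present paper refines into an explicit basis for $\lie{sl}_2$.) Second, the assertion that a weight $\mu=\lambda-\sum_j k_j\beta_j$ bounds the number of factors is not automatic from $\mathrm{wt}(W)\subseteq\lambda-Q^+$ alone: one needs weights bounded below, which requires the local finiteness/integrability of the $U_\mathbb F(\lie g)$-action (using $(x_\alpha^-)^{(k)}v_\lambda=0$ for $k>\lambda(h_\alpha)$ and commutation) so that the weights lie in the convex hull of the Weyl orbit of $\lambda$. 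Both gaps are repairable by standard arguments, so your proposal is a correct outline of the cited proof rather than a complete one.
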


\section{A characteristic-free basis for Weyl modules for  \texorpdfstring{$U_\mathbb F (\lie{sl}_2[t])$}{} } \label{s:sl2}

From now we will consider only the case $\lie g=\lie{sl}_2$. Since $I$ is singleton for $\lie{sl}_2$, we shall denote $\omega=\omega_1$, $x^\pm = x_{\alpha_1}^\pm$, $h=h_1$, $\Lambda_{\alpha_1}=\Lambda$, and $\Lambda_{\alpha_1,k}=\Lambda_{k}$. Similarly, for $\alpha=\alpha_1$ we simply denote by $ X(u)$ the series $X_{\alpha}(u)$ and $Y(u_1,\dots, u_s)$ will denote $Y_\alpha(u_1,\dots,u_s)$, both defined in Section \ref{genGarland}. In this case $P^+$ is in a bijective correspondence with $\mathbb Z_+$ and we denote by $m$ the weight $m\omega$ and $v_m=v_{m\omega}$.

We now state some useful identities for local Weyl modules which will be relevant for our purposes. From  the defining relation of $W_\mathbb F(m)$ and Proposition \ref{prop:commutation} we get

\begin{gather} \label{fromBasicRel} \text{coeff}_{a_1,a_2,\dots,a_s}Y[s]^{(k-\ell(a))} v_m = 0 \text{ for } 0 \le \ell(a) \le k, \ k>m 
\end{gather}
where $\ell(a)=\sum_{i=1}^s a_i$.

Let $\cal J$ be the ideal of $U_\mathbb F(\lie n^-[t])$ generated by the set 
\begin{equation*}
 \left \{ \text{coeff}_{a_1,a_2,\dots,a_s}Y[s]^{(k-\ell(a))}|  0\leq \ell(a) \leq k,\ m+1 \leq k \right \}.
\end{equation*}

Since $W_\mathbb F(m) = U_\mathbb F(\lie n^-[t])v_m$, and $U_\mathbb F(\lie n^-[t])$ is commutative, it follows from \eqref{defrel} and \eqref{fromBasicRel} that $\cal J \cdot W_\mathbb F(m) = 0$. Further, by \eqref{PBWordering}, \eqref{defrelideal} and \eqref{defrelideal1}
\begin{equation}\label{l:quotient}
    W_\mathbb F(m) = \frac{U_\mathbb F(\lie g[t])}{\cal R} \cong \frac{U_\mathbb F(\lie n^-[t])}{ \cal J}. 
\end{equation}

In order to calculate the dimension and to construct a $\mathbb F$-basis of $W_\mathbb F(m)$ we will pass to a purely polynomial setting.

First, note that we have an isomorphism $ \phi: U_\mathbb F(\lie n^-[t]) \to \mathcal D\mathbb F_\infty$ defined by $(x^-\otimes t^r)^{(k)} \mapsto x_r^{(k)}$ for all $r,k\in \mathbb Z_+$. Now, consider the series $Y_\infty[s]$ with coefficients in $\mathcal D\mathbb F_\infty$ given by 
\begin{equation}
\sum_\eta (-1)^{\ell( \eta )}\frac{\ell(\eta)!}{\prod_{i=1}^{s}m_i(\eta)!}x_{|\eta|}\prod_{i=1}^{s} u_i^{m_i(\eta)} \label{formula:Y}
\end{equation}
where the sum is over all partitions $\eta$ of length $\leq s$. Respectively, we define $\mathsf{Y}[s]$ with coefficients in $\mathcal D\mathbb F_m$ by \eqref{formula:Y} with the restriction that $|\eta|\leq m-1$.

Consider the ideals
\begin{gather*}
J_\infty :=\left\langle  \text{coeff}_{a_1,a_2,\dots,a_s}Y_\infty[s]^{(k-\ell(a))}\ \mid \  0\leq \ell(a) \leq k,\ m+1 \leq k \right\rangle\\
J_m := \left\langle   \text{coeff}_{a_1,a_2,\dots,a_s}\mathsf{Y}[s]^{(k-\ell(a))}\ \mid \   0\leq \ell(a) \leq k,\ m+1 \leq k \right\rangle.
\end{gather*}
From the isomorphism $\phi$ and the inclusion $\mathcal D\mathbb F_m \hookrightarrow \mathcal D\mathbb F_\infty$, we get isomorphisms 
\begin{equation}\label{e:isom1}
\frac{\mathcal D\mathbb F_m}{ J_m} \cong \frac{\mathcal D\mathbb F_\infty}{ J_\infty} \cong  \frac{U_\mathbb F(\lie n^-[t])}{\cal J}.
\end{equation}
In particular, \eqref{l:quotient} and \eqref{e:isom1} implies $$\dim W_\mathbb F (m) = \dim \frac{\mathcal D\mathbb F_m}{J_m}$$
as $\mathbb F$-vector spaces.

The rest of this section is devoted to showing that in fact we have $\dim W_\mathbb F (m)=2^m$ (in particular, the dimension is independent of the ground field), and to construct an explicit basis for $W_\mathbb F (m)$. 

We now discuss a Gr\"obner-Shirshov pair of $J_m$. First, we need to define lexicographic ordering for the divided power algebra $\mathcal D\mathbb F_m$. This is not the same as the usual lexicographic order, but it is adapted for reducing our infinite alphabet $\{x_i^{(j)}|0\leq i\leq m-1, j\geq 1\}$ modulo the relations $x_i^{(r)}x_i^{(s)}-\binom{r+s}{s}x_i^{(r+s)}$. 
First, we write down a monomial in increasing order of its variables and for each variable in increasing order of its upper index. For monomials of the form $x_i^{(\mu_1)}x_i^{(\mu_2)}\cdots x_i^{(\mu_k)}$ with $\mu$ a partition, we compare them by degree-graded lexicographic ordering, then we extend this ordering to a product ordering on arbitrary monomials by using lexicographic order on the index of variables appearing in the monomials. This is a well-defined monomial ordering, and it can be seen as a natural extension of the usual lexicographic ordering.

\begin{exe} Let $\lambda$ and $\mu$ two partitions as in Subsection \ref{partitions} and \ref{partitions-monomials}.
	
	\begin{enumerate}
		\item If $|\lambda|>|\mu|$, then $x_i^{(\lambda)}$ is greater than $x_i^{(\mu)}$ for any $0\le i\le m-1$. Further, $x_i^{(k)}$, $0\le i\le m-1$ and $k>0$, is greater than any polynomial involving only $x_j$ with $j>i$.
		
		\item $(x_i^{(1)})^{m_1(\lambda)}(x_i^{(2)})^{m_2(\lambda)}\cdots (x_i^{(\lambda_1)})^{m_{\lambda_1}(\lambda)}\geq_{\text{lex}}x_i^{(|\lambda|)}$. In other words, $x_i^{(|\lambda)|}$ is the minimum monomial over all partitions $\mu\vdash |\lambda|$.
		
		\item $x^{(\lambda)}<_{\text{lex}}x^{(\mu)}$ if, and only if, $\lambda <_{\text{lex}} \mu$ using the usual lexicographic order for partitions, i.e. $m_i(\lambda)< m_i(\mu)$ where $i$ is the least index such that $m_i(\lambda)\neq m_i(\mu)$.
		
	\end{enumerate} 
\end{exe}

Before we continue, notice that to find an $\mathbb F$-basis for $\frac{\mathcal D\mathbb F_m}{J_m}$ by using Gr\"obner-Shirshov theory,  it is not necessary to compute a Gr\"obner-Shirshov basis for $\frac{\mathcal D\mathbb F_m}{J_m}$ completely. 
Recall that, if $I$ is an ideal of $\mathcal D\mathbb F_m$ with Gr\"obner-Shirshov pair $(S,G)$ with respect to a monomial order, then a linear basis of $\frac{\mathcal D\mathbb F_n}{I}$ is the set of unreduced monomials with respect to $(S,G)$. 

The main theorem of this paper is to find a Gr\"obner-Shirshov pair $(S,G_m)$ for $J_m$ with respect to the ordering defined above, and thereby to find a basis of $\frac{\mathcal D \mathbb F_m}{J_m}$, where $$S=\left \langle x_i^{(j)}x_i^{(k)}-\binom{j+k}{j}x_i^{(j+k)}|0\leq i \leq n-1, j,k\geq 0\right \rangle$$ and the set $G_m$ will be given later, but for now we give only the set of leading monomials (which also determine the basis elements). We are ready to state our main result:

\begin{thm}\label{t:leader}
	The set of leading monomials of $G_m$ is 
	\begin{equation*}
	\{ x_0^{(a_0)}\cdots x_{s}^{(a_{s})} \mid 0\leq s \leq m-1, a_0+a_1+\cdots+a_{s}>m-s, a_{s}\ne0 \}.
	\end{equation*}
\end{thm}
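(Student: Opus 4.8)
I would establish Theorem~\ref{t:leader} by showing that the stated set of monomials is exactly the set of leading monomials $LM(G_m)$ of a Gröbner--Shirshov basis $(S,G_m)$ for $J_m$, and I would do this in two halves: (i) each generator $\mathrm{coeff}_{a_1,\dots,a_s}\mathsf{Y}[s]^{(k-\ell(a))}$ of $J_m$, after reduction modulo $S$, has leading monomial lying in the claimed set (or reduces to something already accounted for), and (ii) the monomials \emph{not} divisible by any element of the claimed set number exactly $2^m$, matching $\dim W_\mathbb F(m)$, so no further leading terms are needed and the pair is genuinely Gröbner--Shirshov. Concretely, the first step is to compute $LM$ of the explicit polynomial $\mathrm{coeff}_{a_1,\dots,a_s}\mathsf{Y}[s]^{(k-\ell(a))}$ in the lexicographic order on $\mathcal D\mathbb F_m$ defined above: using part~(3) of the Lemma in Section~\ref{genGarland}, $\mathsf{Y}[s]$ is a sum over partitions $\eta$ with $\ell(\eta)\le s$ and $|\eta|\le m-1$ of terms $\pm c_\eta\, x_{|\eta|}\prod u_i^{m_i(\eta)}$; taking the $(k-\ell(a))$-th divided power and extracting the coefficient of $u_1^{a_1}\cdots u_s^{a_s}$ produces a sum of monomials $x_0^{(b_0)}x_1^{(b_1)}\cdots x_s^{(b_s)}$ with $\sum b_i = k-\ell(a)$ and $\sum i\,b_i$ determined by the $u$-degree constraint $\sum_i i\,(\text{parts of size }i) = \ldots$; among these, the one with the smallest high-index variables (hence largest in our order, by Example~part~(1)) is the relevant leading monomial, and one checks its exponent vector satisfies $a_0+\cdots+a_s > m-s$ and $a_s\ne 0$ for the right choice of $s$.

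The second step is the counting/completeness argument. I would show the set of monomials in $\mathcal D\mathbb F_m$ \emph{not divisible} by any $x_0^{(a_0)}\cdots x_s^{(a_s)}$ with $a_0+\cdots+a_s>m-s$, $a_s\ne0$, $0\le s\le m-1$, is in bijection with subsets of $\{0,1,\dots,m-1\}$ — i.e.\ has cardinality $2^m$. The divisibility conditions should force such a surviving monomial $x_0^{(c_0)}\cdots x_{m-1}^{(c_{m-1})}$ to satisfy $c_0+c_1+\cdots+c_s\le m-s$ whenever $c_s\ne 0$ (for each $s$), and a short combinatorial argument (reading the partition $\lambda$ with $m_i(\lambda)=c_i$ against the bound, essentially $|\lambda|\le m - \ell(\lambda) + (\text{something})$, forcing $\lambda$ to be a partition into distinct parts $\le m-1$... more precisely a strict bound making the surviving partitions exactly those with all parts distinct and $\le m$) yields the count $2^m$. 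This matches $\dim\frac{\mathcal D\mathbb F_m}{J_m}=\dim W_\mathbb F(m)$, which by \eqref{l:quotient} and \eqref{e:isom1} is what we want; and crucially, matching this number on the nose proves we have \emph{not} omitted any leading monomial: if $(S,G_m)$ were not Gröbner--Shirshov, the reduced-monomial count would strictly exceed the true dimension, contradiction. (This also recovers $\dim W_\mathbb F(m)=2^m$.)

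For step (i) I would use the Elimination Theorem (Theorem~\ref{elim}) if helpful — since $J_m$ lives in $\mathcal D\mathbb F_m$ but the generators come from the infinite-variable object $Y_\infty[s]$ via the truncation $|\eta|\le m-1$, the restriction from $\mathcal D\mathbb F_\infty$ is exactly an elimination of the variables $x_m, x_{m+1},\dots$, so the relevant leading terms of $G_m$ are the images of leading terms of a Gröbner--Shirshov basis of $J_\infty$ that happen to involve only $x_0,\dots,x_{m-1}$. I would also need Garland's formula \eqref{fromBasicRel} / Proposition~\ref{prop:commutation} only insofar as it is already baked into the definition of $J_m$; the real work is polynomial bookkeeping.

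\textbf{Expected main obstacle.} The hard part is step (i): pinning down precisely which monomial is the leading monomial of $\mathrm{coeff}_{a_1,\dots,a_s}\mathsf{Y}[s]^{(k-\ell(a))}$ after reducing modulo the divided-power relations $S$, since the divided-power structure means that a naive "biggest monomial" can be $S$-reducible and must be rewritten; controlling this rewriting while tracking how the $u$-degree constraints translate into the exponent inequality $a_0+\cdots+a_s>m-s$ with $a_s\ne0$ requires care. I would expect to need an auxiliary lemma identifying, for each admissible $(s,k,a)$, a single "extremal" partition $\eta$ (likely $\eta=(s,s,\dots,s)$ or a near-staircase) whose contribution survives as the leading term and is already $S$-reduced, and then a separate verification that the other terms are strictly smaller. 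Once that lemma is in hand, step (ii) is a self-contained and fairly clean combinatorial count, and the completeness conclusion follows formally from the dimension match.
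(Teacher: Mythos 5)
Your plan has two genuine gaps, and together they miss the paper's central mechanism. First, the completeness step is circular. You close the argument by matching the count of reduced monomials ($2^m$) against $\dim W_\mathbb F(m)=2^m$, but in this paper that dimension is a \emph{consequence} of Theorem \ref{t:leader} (Corollary \ref{basis}); in the characteristic-free setting it is precisely what is being established, and you even note parenthetically that your argument ``recovers'' it. Unconditionally, the reduced-monomial count only bounds the dimension from above (reduced monomials contain the standard monomials once every claimed monomial is known to lie in $LM(J_m)$), so to conclude that no leading monomial has been omitted you need the lower bound $\dim W_\mathbb F(m)\geq 2^m$, which you neither prove nor import. The paper never uses a dimension count: completeness comes from the surjection $\psi:\mathcal D\mathbb F_m\to \left(V^{\otimes m}\otimes \mathbb F[t_1,\dots,t_m]_{\Sigma_m}\right)^{\Sigma_m}$ together with the characteristic-free Gr\"obner basis of the coinvariant ideal (the theorem cited from \cite{MS}), which shows $\psi(s_{\lambda,k})\neq 0$ when $\lambda_1\leq m-k$, so those elements are \emph{not} in $J_m$; the Gr\"obner--Shirshov property of $(S,G_m)$ is then verified by compositions, and $\dim W_\mathbb F(m)=2^m$ falls out afterwards.

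Second, step (i) aims at the wrong containment and would not succeed as stated. Under the lex order of Section \ref{s:sl2} (where $x_0>x_1>\cdots$), the raw generators $\mathrm{coeff}_{a_1,\dots,a_s}\mathsf Y[s]^{(k-\ell(a))}$ --- identified in Proposition \ref{prop:basis_funs} with the forgotten-symmetric-polynomial elements $f_{\lambda,k}=\sum_{\mu\succeq\lambda}D_{\lambda\mu}x^{(\mu)}$ --- do \emph{not} have $x^{(\lambda)}$ as leading monomial: the lex-largest term of the support is a dominance-maximal $\mu$, typically involving $x_0$, and many monomials of the claimed set (e.g.\ $x_1^{(m)}$) are not the leading monomial of any single generator. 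Moreover, even showing $LM(\text{generators})$ lies inside the claimed set would not prove the theorem; what is needed is the reverse direction, namely for \emph{each} claimed monomial an element of $J_m$ having it as leading term with unit integral coefficient. The paper manufactures exactly these by the unitriangular integral change of generators $s_{\lambda,k}=\sum_{\mu\preceq\lambda'}K_{\lambda'\mu}f_{\mu,k}$ (Equation \eqref{eq:basis}), whose leading monomial is $x^{(\lambda)}$ since $K_{\lambda\lambda}=1$; your hoped-for lemma about a single extremal partition surviving in one generator has no analogue, and this Kostka-number recombination (valid over $\mathbb Z$, hence in every characteristic) is the missing idea. As a side remark, the appeal to the Elimination Theorem to pass from $\mathcal D\mathbb F_\infty$ to $\mathcal D\mathbb F_m$ is also off target: the order with $x_0$ largest does not eliminate $x_m,x_{m+1},\dots$, and the paper instead uses the isomorphism \eqref{e:isom1} directly.
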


\begin{cor} \label{basis} The set $\{ x_0^{(a_0)}\cdots x_{s}^{(a_{s})} \mid 0\leq s \leq m-1, a_0+a_1+\cdots+a_{s}\leq m-s\}$  is a monomial $\mathbb F$-linear basis of $\frac{\mathcal D \mathbb F_m}{J_m }$. In particular, $\dim\left(\frac{\mathcal D\mathbb F_m}{ J_m }\right) = \dim W_\mathbb F (m) = 2^m$. 
\end{cor}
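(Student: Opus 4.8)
The plan is to deduce the corollary directly from Theorem \ref{t:leader} together with the Gr\"obner--Shirshov machinery assembled in Section \ref{gsb}. By Theorem \ref{gs}\eqref{gs2} (or the discussion preceding Theorem \ref{t:leader}), once $(S,G_m)$ is a Gr\"obner--Shirshov pair for $J_m$, a linear basis of $\mathcal D\mathbb F_m / J_m$ is given by the set of $(S,G_m)$-reduced monomials, i.e.\ the monomials of $\mathcal D\mathbb F_m$ that are not divisible by any leading monomial of $S$ nor by any leading monomial of $G_m$. The leading monomials of $S$ are the $x_i^{(j)}x_i^{(k)}$ with $j,k\ge 1$, so being $S$-reduced precisely means being a genuine divided-power monomial $x_0^{(a_0)}x_1^{(a_1)}\cdots x_{m-1}^{(a_{m-1})}$ with no repeated variable; these are exactly the monomials $x^{(\lambda)}$ attached to partitions $\lambda$ with $\ell(\lambda)\le m-1$ as in Subsection \ref{partitions-monomials}. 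First I would record this reduction. Then, invoking Theorem \ref{t:leader}, a reduced monomial $x_0^{(a_0)}\cdots x_{m-1}^{(a_{m-1})}$ survives iff for every $s$ with $0\le s\le m-1$ and $a_s\ne 0$ one does \emph{not} have $a_0+\cdots+a_s>m-s$; writing $s$ for the largest index with $a_s\ne 0$ (so the monomial is $x_0^{(a_0)}\cdots x_s^{(a_s)}$ with $a_s\ne 0$), this is equivalent to $a_0+a_1+\cdots+a_s\le m-s$. (One has to check that the condition for this maximal $s$ implies it for all smaller $s'$ with $a_{s'}\ne0$: indeed $a_0+\cdots+a_{s'}\le a_0+\cdots+a_s\le m-s\le m-s'$, so the single inequality at the top index suffices.) This yields exactly the stated basis set.

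For the dimension count I would then show $\#\{(a_0,\dots,a_s):0\le s\le m-1,\ a_s\ne0,\ a_0+\cdots+a_s\le m-s\}=2^m$, together with the degenerate monomial $1$ (the $s=\emptyset$ case, or rather $a_0=\cdots=a_{m-1}=0$). The cleanest way is a bijection or a generating-function identity. Fixing the top index $s$ and the partial sum $a_0+\cdots+a_s=N$ with $a_s\ge1$ and $N\le m-s$, the number of such tuples is the number of compositions of $N$ into $s+1$ nonnegative parts with the last part positive, namely $\binom{N-1}{s}$. Summing, the total (including the empty monomial, which I account for as an $N=0$ or $s=-1$ term) is $1+\sum_{s=0}^{m-1}\sum_{N=s+1}^{m-s}\binom{N-1}{s}$. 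Using the hockey-stick identity $\sum_{N}\binom{N-1}{s}=\binom{m-s}{s+1}$ for the inner sum, this becomes $\sum_{s\ge-1}\binom{m-s}{s+1}=\sum_{j\ge0}\binom{m-j}{j}$, which is the well-known Fibonacci-type sum; but in fact the convenient route is to instead biject reduced surviving monomials with subsets of $\{1,\dots,m\}$. I would set up the map sending $x_0^{(a_0)}\cdots x_s^{(a_s)}$ to the lattice path / subset encoded by the partial sums $a_0,\ a_0+a_1+1,\ a_0+a_1+a_2+2,\dots$, exploiting that the constraint $\sum_{i\le s}a_i\le m-s$ says exactly that these shifted partial sums stay $\le m$; counting these gives $2^m$ on the nose. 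Finally, combining with the isomorphisms \eqref{l:quotient} and \eqref{e:isom1}, which already identify $\dim W_\mathbb F(m)=\dim(\mathcal D\mathbb F_m/J_m)$, gives $\dim W_\mathbb F(m)=2^m$.

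The only genuine content here is Theorem \ref{t:leader} itself, which is assumed; so the main (minor) obstacle in the corollary is purely combinatorial bookkeeping: correctly translating "not divisible by any leading monomial of $G_m$" into the clean inequality $a_0+\cdots+a_s\le m-s$ for the top nonzero index, and then nailing the count as exactly $2^m$ rather than an off-by-one variant. I expect the subset/lattice-path bijection to make both steps transparent and to dispose of the edge cases ($m=0$, the empty monomial, monomials with $a_s=0$ not being maximal) without fuss.
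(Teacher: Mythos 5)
Your route is the same as the paper's: the basis statement is read off from Theorem \ref{t:leader} together with Theorem \ref{gs}(\ref{gs2}), and your translation of ``not divisible by any leading monomial of $G_m$'' into the single inequality $a_0+\cdots+a_s\le m-s$ at the largest index $s$ with $a_s\neq 0$ (plus the monotonicity remark that this inequality at the top index implies it at every lower index) is correct; it is exactly the bookkeeping the paper leaves implicit, so the first assertion of the corollary is fine in your write-up.

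The place where your proposal actually falters is the count $\dim=2^m$, which is part of the corollary (the paper calls it immediate, and it is, but your version as written does not deliver it). The number of tuples $(a_0,\dots,a_s)$ with $a_i\ge 0$, $a_s\ge 1$ and $a_0+\cdots+a_s=N$ is $\binom{N+s-1}{s}$, not $\binom{N-1}{s}$ (your formula counts compositions into \emph{positive} parts); with the correct count the hockey-stick identity gives $\sum_{N=1}^{m-s}\binom{N+s-1}{s}=\binom{m}{s+1}$ monomials with top index $s$, hence $1+\sum_{s=0}^{m-1}\binom{m}{s+1}=2^m$ in total, and no Fibonacci-type sum ever appears. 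Your fallback bijection is also not yet a bijection as stated: the shifted partial sums $P_j=a_0+\cdots+a_j+j$ do form an $(s+1)$-element subset of $\{0,1,\dots,m\}$, but the condition $a_s\ge 1$ forces $P_s-P_{s-1}\ge 2$, so the image misses many subsets and ``counting these gives $2^m$'' does not follow as written. The repair is easy: send $x_0^{(a_0)}\cdots x_s^{(a_s)}$ to $\{a_0+1,\ a_0+a_1+2,\ \dots,\ a_0+\cdots+a_{s-1}+s,\ a_0+\cdots+a_s+s\}$, which is a bijection between the monomials with top index $s$ satisfying your inequality and the $(s+1)$-element subsets of $\{1,\dots,m\}$ (the empty monomial corresponding to the empty set), giving $2^m$ on the nose. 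So the argument is the paper's argument with a repairable slip in the counting step, not a conceptual gap.
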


\proof First part follows from Theorems \ref{t:leader} and \ref{gs}\eqref{gs2}. Last part is immediate by counting the elements given in the first part.
\endproof

Putting in the original notation we get:

\begin{cor} \label{cor:basis} The set $\{ (x^-\otimes 1)^{(a_0)}\cdots (x^-\otimes t^s)^{(a_{s})} v_m \mid  0\leq s \leq m-1, a_0+a_1+\cdots+a_{s}\leq m-s\}$  is a monomial $\mathbb F$-linear basis of $W_\mathbb F (m)$. \hfill \qedsymbol
\end{cor}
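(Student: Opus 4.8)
The plan is to transport the basis already obtained in Corollary \ref{basis} for the divided-power quotient $\frac{\mathcal D\mathbb F_m}{J_m}$ back to $W_\mathbb F(m)$ along the chain of isomorphisms established earlier in this section. Concretely, combining \eqref{l:quotient} and \eqref{e:isom1} we have an explicit chain of $\mathbb F$-algebra (and $\mathbb F$-vector space) isomorphisms
\begin{equation*}
\frac{\mathcal D\mathbb F_m}{J_m}\;\cong\;\frac{\mathcal D\mathbb F_\infty}{J_\infty}\;\cong\;\frac{U_\mathbb F(\lie n^-[t])}{\cal J}\;\cong\;W_\mathbb F(m),
\end{equation*}
where the last identification sends the coset of $1$ to $v_m$. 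First I would unwind the composite on the level of the natural generators: the isomorphism $\frac{\mathcal D\mathbb F_\infty}{J_\infty}\cong\frac{U_\mathbb F(\lie n^-[t])}{\cal J}$ is induced by the inverse of the map $\phi$ of this section, so it sends $x_r^{(k)}\mapsto (x^-\otimes t^r)^{(k)}$; and the inclusion $\mathcal D\mathbb F_m\hookrightarrow\mathcal D\mathbb F_\infty$ (which descends to the first isomorphism above) is just the identity on the variables $x_0,\dots,x_{m-1}$. Hence the composite isomorphism $\frac{\mathcal D\mathbb F_m}{J_m}\xrightarrow{\ \sim\ }W_\mathbb F(m)$ carries the monomial $x_0^{(a_0)}\cdots x_s^{(a_s)}$ to $(x^-\otimes 1)^{(a_0)}\cdots(x^-\otimes t^s)^{(a_s)}v_m$, since in $W_\mathbb F(m)=U_\mathbb F(\lie n^-[t])v_m$ the element $1$ is sent to $v_m$.

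The second step is purely formal: a vector-space isomorphism carries a basis to a basis. By Corollary \ref{basis}, the set
\begin{equation*}
\{\,x_0^{(a_0)}\cdots x_s^{(a_s)}\mid 0\le s\le m-1,\ a_0+a_1+\cdots+a_s\le m-s\,\}
\end{equation*}
is an $\mathbb F$-linear basis of $\frac{\mathcal D\mathbb F_m}{J_m}$; applying the composite isomorphism above termwise yields that its image
\begin{equation*}
\{\,(x^-\otimes 1)^{(a_0)}\cdots(x^-\otimes t^s)^{(a_s)}v_m\mid 0\le s\le m-1,\ a_0+a_1+\cdots+a_s\le m-s\,\}
\end{equation*}
is an $\mathbb F$-linear basis of $W_\mathbb F(m)$, which is exactly the assertion. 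In particular the cardinality is $2^m$, consistent with Corollary \ref{basis}.

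There is essentially no obstacle here beyond bookkeeping; the only point that deserves a sentence of care is checking that each isomorphism in the chain really does act on the chosen monomials in the stated way — i.e.\ that one is applying the \emph{same} explicit maps $\phi^{-1}$ and the inclusion $\mathcal D\mathbb F_m\hookrightarrow\mathcal D\mathbb F_\infty$ used to derive \eqref{e:isom1}, and that these are compatible with the identification of $1$ with $v_m$ coming from $W_\mathbb F(m)=U_\mathbb F(\lie n^-[t])v_m$. The substantive content — that these monomials span a space of the right dimension and are linearly independent — has already been absorbed into Corollary \ref{basis} (hence ultimately into Theorem \ref{t:leader} and Theorem \ref{gs}\eqref{gs2}), so this corollary is a one-line translation back to the original notation.
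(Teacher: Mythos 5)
Your proposal is correct and is essentially the paper's own argument: the paper simply states ``putting in the original notation'' and cites Corollary \ref{basis}, which is exactly your transport of the basis along the isomorphisms \eqref{l:quotient} and \eqref{e:isom1} (with $x_r^{(k)}\mapsto (x^-\otimes t^r)^{(k)}$ and $1\mapsto v_m$). You merely make explicit the bookkeeping that the paper leaves implicit.
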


\begin{rem} The basis described here is the same basis obtained by Chari-Pressley \cite{CPweyl} and by Kus-Littelmann \cite{kus} in the case of $W_\mathbb C(m)$. The present construction can be thought as a third construction of the same basis for $W_\mathbb C(m)$.  However, it never appeared in the literature in the hyper current context (positive characteristic case), which makes relevant the achievement. 
\end{rem}

\section{Some applications}\label{app}

\textit{In the next two subsections we suppose $\mathbb F$ is an algebraically closed field of characteristic zero. }

\subsection{The basis for \texorpdfstring{$W_\mathbb F (m)$}{} coming from the reverse lexicographic order.}
The simple combinatorial description of the ideal $J_m$ and its leading terms with respect to a fixed order is also useful to construct bases from other monomial orders in $\mathbb F_m$. We can  describe a basis for $ W_\mathbb F (m)$  whenever we can describe the leading term of the elements in a Gr\"{o}bner basis for $J_m$ with respect to this order similarly to the method of the famous \textit{Gr\"{o}bner Walk algorithm}.

The calculation and procedure to construct the basis with respect to the reverse lexicographic order through the basis in the lexicographic case is very technical and we present it separately in Subsection \ref{s:revbasis}. We give here only the statement of our second main result. The explanation for disregarding positive characteristic will be given then.

\begin{thm} \label{t:revbasis} 
	Let 
	\begin{equation*}
	\mathcal R_1 = \left \{\begin{tabular}{l|l}
	  $x_0^{f_0}x_1^{f_1}\cdots x_{m-1}^{f_{m-1}} v_m $&\begin{tabular}{l} $  f_i\geq 0,\sum_{i=0}^{m-1}f_i\leq \frac{m}{2},$\\
	  $(i-1)f_i+i f_{i+1} \leq m-2\sum_{j=i}^{m-1} f_j,$\\ for $ 1\leq i \leq m-2 $ \end{tabular}\end{tabular}\right \},
	 \end{equation*} 	 
	 and $$\mathcal R_2= \left\{x_0^{f_0}x_1^{f_1}\cdots x_{m-1}^{f_{m-1}}v_m \mid   f_i\ge 0, \sum_{i=0}^{m-1}f_i=k> \frac{m}{2}  \text{ and }  x_0^{m-2k+f_0}x_1^{f_1}\cdots x_{m-1}^{f_{m-1}} \in \mathcal R_1 \right\}.$$
	 The set $\mathcal R_m = \mathcal R_1 \cup\mathcal R_2$ is a basis for $W_\mathbb F(m)$. 
\end{thm}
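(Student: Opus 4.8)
Since $\mathsf{char}(\mathbb F)=0$, every factorial is invertible in $\mathbb F$, so $x_i^{(k)}\mapsto x_i^{k}/k!$ identifies $\mathcal D\mathbb F_m$ with the ordinary polynomial ring $\mathbb F_m=\mathbb F[x_0,\dots,x_{m-1}]$, under which each generator $\text{coeff}_{a_1,\dots,a_s}\mathsf{Y}[s]^{(k-\ell(a))}$ of $J_m$ becomes an ordinary polynomial. By \eqref{l:quotient} and \eqref{e:isom1} we have $W_\mathbb F(m)\cong\mathbb F_m/J_m$, so via $\phi^{-1}$ it suffices to prove that the monomials in $\mathcal R_1\cup\mathcal R_2$ project to a basis of $\mathbb F_m/J_m$; I will produce this basis as the set of standard monomials for the reverse lexicographic order $\prec$. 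All generators of $J_m$ are homogeneous for the grading $\deg x_i=1$, so $J_m$, $\mathbb F_m/J_m$ and (since $J_m$ is homogeneous) $\mathrm{in}_\prec(J_m)$ are graded; under the $\lie{sl}_2=\lie{sl}_2\otimes 1$-action on $W_\mathbb F(m)$ the degree-$d$ piece $(\mathbb F_m/J_m)_d$ is the weight space of weight $m-2d$ (each $x_i=x^-\otimes t^i$ has $h$-weight $-2$, and $v_m$ has weight $m$). By Corollary \ref{basis}, $\dim\mathbb F_m/J_m=2^m$; write $c_d=\dim(\mathbb F_m/J_m)_d=\dim(\mathbb F_m/\mathrm{in}_\prec(J_m))_d$, so $\sum_d c_d=2^m$, and by semisimplicity of $W_\mathbb F(m)$ as an $\lie{sl}_2$-module, $c_d=c_{m-d}$.

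\textbf{Step 1: reflection, reducing to degrees $\le m/2$.} For $d>m/2$ the operator $(x^-\otimes 1)^{(2d-m)}$ --- i.e., up to the invertible scalar $(2d-m)!$, multiplication by $x_0^{2d-m}$ --- restricts to an isomorphism $(\mathbb F_m/J_m)_{m-d}\xrightarrow{\ \sim\ }(\mathbb F_m/J_m)_d$, since on each simple $\lie{sl}_2$-summand it is the isomorphism between the weight spaces of opposite weights $2d-m$ and $m-2d$. Hence, granting that $\mathcal R_1^{(d)}$ (the degree-$d$ part of $\mathcal R_1$) is a monomial basis of $(\mathbb F_m/J_m)_d$ for every $d\le m/2$, the set $\{x_0^{2d-m}b\bmod J_m:b\in\mathcal R_1^{(m-d)}\}$ is a basis of $(\mathbb F_m/J_m)_d$ for $d>m/2$; writing $b=x_0^{\,m-2d+f_0}x_1^{f_1}\cdots x_{m-1}^{f_{m-1}}$ and $x_0^{2d-m}b=x_0^{f_0}x_1^{f_1}\cdots x_{m-1}^{f_{m-1}}$ identifies this set with $\mathcal R_2^{(d)}$ (the exponent constraint $f_0\ge 2d-m$ is exactly the requirement $x_0^{m-2d+f_0}x_1^{f_1}\cdots\in\mathcal R_1$). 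Taking the union over all $d$ and transporting by $\phi^{-1}$ yields the theorem.

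\textbf{Step 2: degrees $\le m/2$.} It remains to identify, for $d\le m/2$, the degree-$d$ standard monomials with $\mathcal R_1^{(d)}$. Fix a generator $g=\text{coeff}_{a_1,\dots,a_s}\mathsf{Y}[s]^{(k-\ell(a))}$: since the coefficients of $\mathsf{Y}[s]$ are single variables $x_{|\eta|}$, every monomial $x_0^{b_0}\cdots x_{m-1}^{b_{m-1}}$ occurring in $g$ satisfies both $\sum_i b_i=k-\ell(a)$ and $\sum_i i\,b_i=\sum_i i\,a_i$; among the monomials of a fixed degree and fixed ``$t$-moment'' there is a unique $\prec$-greatest one, and for appropriate choices of $(k;a_1,\dots,a_s)$ in Proposition \ref{prop:commutation} this greatest monomial actually occurs in $g$ with a nonzero integer (hence invertible) coefficient. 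Carrying this out, one extracts from $J_m$, for each $i$ with $1\le i\le m-2$, a family of elements whose $\prec$-leading monomials are precisely the divisibility-minimal monomials violating $(i-1)f_i+if_{i+1}\le m-2\sum_{j\ge i}f_j$; let $M$ be the monomial ideal they generate, so $M\subseteq\mathrm{in}_\prec(J_m)$ and, for $d\le m/2$, a degree-$d$ monomial lies outside $M$ exactly when it lies in $\mathcal R_1^{(d)}$ (the bound $\sum f_i\le m/2$ being automatic in this degree range). A combinatorial count gives $|\mathcal R_1^{(d)}|=\binom md=c_d$ (matching the Chari--Pressley count of Corollary \ref{basis}), whence $\dim(\mathbb F_m/M)_d=c_d=\dim(\mathbb F_m/\mathrm{in}_\prec(J_m))_d$; combined with $M\subseteq\mathrm{in}_\prec(J_m)$ this forces $M_d=\mathrm{in}_\prec(J_m)_d$, so $\mathcal R_1^{(d)}$ is the set of $\prec$-standard monomials of degree $d\le m/2$ and a basis of $(\mathbb F_m/J_m)_d$, supplying the hypothesis of Step 1 and finishing the proof.

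\textbf{Main obstacle.} The bulk of the work is the leading-term analysis of Step 2: unlike Theorem \ref{t:leader}, the reverse-lexicographic leading monomials of the generators have no transparent closed form, so one must single out exactly which specializations of the generalized Garland relations of Proposition \ref{prop:commutation} realize the divisibility-minimal violators as leading monomials, track the binomial and sign coefficients produced by the power $\mathsf{Y}[s]^{(k-\ell(a))}$ so as to certify that the predicted leading monomial survives, and verify that nothing finer lies in the initial ideal --- the last point being bought cheaply here by the count $|\mathcal R_1^{(d)}|=c_d$ and the reflection of Step 1 rather than by a full set of $S$-polynomial reductions. The argument is genuinely confined to characteristic zero: the identification $\mathcal D\mathbb F_m=\mathbb F_m$ fails in characteristic $p$ (the relations $x_i^{(r)}x_i^{(s)}=\binom{r+s}{r}x_i^{(r+s)}$ then alter the leading-term combinatorics), and so does the invertibility of $(2d-m)!$ underlying the reflection isomorphism.
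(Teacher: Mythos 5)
Your skeleton is sound in characteristic zero, and two of its ingredients are genuinely nice: the graded dimension comparison against Corollary \ref{basis} (which correctly gives $c_d=\binom{m}{d}$), and the $\lie{sl}_2$-reflection in Step 1, where multiplication by $x_0^{2d-m}$ (i.e.\ $(x^-\otimes 1)^{(2d-m)}$ up to an invertible factorial) transports a basis in degree $m-d$ to one in degree $d$; this is a cleaner route to $\mathcal R_2$ than the paper's, which never invokes the reflection but instead shows directly that every unreduced monomial lies in $\mathcal R_1\cup\mathcal R_2$ and concludes from the global count $|\mathcal R_1\cup\mathcal R_2|=2^m=\dim W_\mathbb F(m)$. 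The problem is that what you defer to ``carrying this out'' in Step 2 is not a technical verification but the actual content of the theorem, and your one-line justification of it is off the mark. The generator $g=\text{coeff}_{a_1,\dots,a_s}\mathsf{Y}[s]^{(k-\ell(a))}$ is, up to sign, $f_{\lambda,k}=\sum_{\mu\succeq\lambda}D_{\lambda\mu}x^{(\mu)}$ for the partition $\lambda$ encoded by $(a_1,\dots,a_s)$: only monomials with $\mu$ \emph{dominating} $\lambda$ occur, and since $\mu\succeq\lambda$ forces $\mu\le_{\text{revlex}}\lambda$, the $\prec$-greatest monomial of the given degree and $t$-moment in general does \emph{not} occur in $g$ at all. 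Moreover, membership in $J_m$ forces $\ell(\lambda)>m-k$, while your target violator $\mu$ has $\ell(\mu)=k\le m/2$, so $\lambda\neq\mu$ and $x^{(\lambda)}$ itself is not even a monomial of the right length; one must exhibit, for each divisibility-minimal violator $\mu$, a specific $\lambda$ with $\ell(\lambda)>m-k$ such that $x^{(\mu)}$ is the revlex-largest monomial with $D_{\lambda\mu}\neq 0$. That is precisely what the paper's machinery does (the elements $f_{\lambda,k}$ of Proposition \ref{prop:basis_funs}, Algorithm 1, Lemma \ref{lem:index}, and part (1) of the characterization lemma), and it does not follow from Proposition \ref{prop:commutation} alone.

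The second gap is the assertion ``a combinatorial count gives $|\mathcal R_1^{(d)}|=\binom{m}{d}$.'' This equality is true but is the longest single argument in the paper's proof: the recursion $g_{t,\ell,s,m}$, the identity \eqref{eq:recur_binary}, and the resulting $B_{m,\ell}=\binom{m}{\ell}-\binom{m}{\ell-1}$, summed to $\binom{m}{d}$. Without it your dimension comparison $\dim(\mathbb F_m/M)_d=c_d$ has no content, and without gap one you do not even know $M\subseteq\mathrm{in}_\prec(J_m)$. So as written the proposal is a correct strategy outline (with a genuinely different, and attractive, treatment of $\mathcal R_2$ via the reflection), but both pillars it rests on --- realizing the minimal violators as revlex leading monomials of explicit elements of $J_m$, and the enumeration of $\mathcal R_1$ degree by degree --- are exactly the parts left unproved, so it is not yet a proof of Theorem \ref{t:revbasis}.
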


\subsection{} The Chari-Venkatesh construction in \cite{CV} provides a basis for $W_\mathbb C(m)$ as follows (we are suiting their original notation to correspond to ours). Let $\mathcal S_m$ be the set of $m$-tuples $(i_0,\dots,i_{m-1})\in (\mathbb  Z^+)^m$ such that for all $0\le k\le m-1$ and $1\le j\le k+1$ we have $$ji_{k}+(j+1)i_{k+1}+2\sum_{p=k+2}^{m-1}i_p \le m-k+j+1.$$
Then the elements 
\begin{equation}\label{cv}
\{(x^-\otimes 1)^{i_0}\dots (x^-\otimes t^{m-1})^{i_{m-1}}\mid (i_0,\dots,i_{m-1})\in \mathcal S_m\}
\end{equation}
form a basis for $W_\mathbb F(m)$ (cf. \cite[Theorem 5]{CV}).

Surprisingly this basis and the basis coming from the reverse lexicographic order with our Gr\"obner basis approach are the same:

\begin{prop} The Chari-Vankatesh basis  and the basis of Theorem \ref{t:revbasis} are the same.
\end{prop}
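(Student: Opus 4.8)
The plan is to show that the two index sets coincide after a change of variables, since both bases consist of monomials of the form $(x^-\otimes 1)^{f_0}\cdots(x^-\otimes t^{m-1})^{f_{m-1}}v_m$ and there is an obvious candidate bijection. First I would set up the dictionary between the Chari--Venkatesh tuples $(i_0,\dots,i_{m-1})\in\mathcal S_m$ and the exponent tuples $(f_0,\dots,f_{m-1})$ appearing in $\mathcal R_m=\mathcal R_1\cup\mathcal R_2$. On the Chari--Venkatesh side, take $j=k+1$ in their defining inequality
\[
j\,i_k+(j+1)\,i_{k+1}+2\sum_{p=k+2}^{m-1} i_p\ \le\ m-k+j+1
\]
to obtain (for $k=0$, $j=1$) the total-degree bound $\sum_{p=0}^{m-1} i_p\le \frac{m}{2}+\text{(const)}$, and for general $k$ extract the two-term constraints that match, after reindexing, the inequalities $(i-1)f_i+if_{i+1}\le m-2\sum_{j=i}^{m-1}f_j$ defining $\mathcal R_1$. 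The key computation is to verify that the full family of Chari--Venkatesh inequalities (all $j$ from $1$ to $k+1$, all $k$) is \emph{equivalent} to the apparently smaller family cutting out $\mathcal R_1$ — that the intermediate values of $j$ are redundant, being convex combinations (or consequences) of the extreme cases $j=1$ and $j=k+1$.

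Next I would handle the split into $\mathcal R_1$ and $\mathcal R_2$. The set $\mathcal R_1$ collects the ``small'' tuples with $\sum f_i\le m/2$, while $\mathcal R_2$ is defined by a reflection: $x_0^{f_0}\cdots x_{m-1}^{f_{m-1}}v_m\in\mathcal R_2$ exactly when $\sum f_i=k>m/2$ and the reflected tuple $(m-2k+f_0,f_1,\dots,f_{m-1})$ lies in $\mathcal R_1$. I would check that this reflection is precisely the combinatorial shadow, on exponent tuples, of the $\lie{sl}_2$-symmetry $v_m\leftrightarrow (x^-)^{(m)}v_m$ (equivalently the action of the longest Weyl group element, or of the automorphism swapping highest and lowest weight vectors), under which the Chari--Venkatesh set $\mathcal S_m$ is also stable. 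Concretely: given $(i_0,\dots,i_{m-1})\in\mathcal S_m$ with $\sum i_p=k>m/2$, one forms the reflected tuple by replacing $i_0$ with $m-2k+i_0$ and shows it lands in the ``$\le m/2$'' part of $\mathcal S_m$; conversely every element of $\mathcal S_m$ with $\sum>m/2$ arises this way. Matching this against the definitions of $\mathcal R_1,\mathcal R_2$ identifies $\mathcal R_m$ with \eqref{cv} as subsets of the same monomial basis of $W_\mathbb F(m)$.

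A cleaner alternative, which I would use as a sanity check or even as the main argument, is to avoid the piecewise description altogether: by Theorem \ref{t:revbasis} the set $\mathcal R_m$ is a basis, and by \cite[Theorem 5]{CV} the set \eqref{cv} is a basis, so both have cardinality $2^m$ (consistent with Corollary \ref{basis}); it then suffices to prove the \emph{inclusion} $\mathcal R_m\subseteq\eqref{cv}$ (or the reverse), since an inclusion of equinumerous finite sets is an equality. Thus I would only need to verify one direction: take an arbitrary $x_0^{f_0}\cdots x_{m-1}^{f_{m-1}}v_m\in\mathcal R_1$, and check all the Chari--Venkatesh inequalities hold for $(f_0,\dots,f_{m-1})$; then separately take a general element of $\mathcal R_2$ and, using the reflection identity together with the already-established $\mathcal R_1$-case, deduce its membership in \eqref{cv}. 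This reduces the whole proposition to one polyhedral containment plus its behavior under the reflection.

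The main obstacle will be the bookkeeping in the first paragraph: showing the redundancy of the intermediate inequalities $1<j<k+1$ in the Chari--Venkatesh description, and correctly lining up the index shifts (their running sums $\sum_{p=k+2}^{m-1}$ versus our $\sum_{j=i}^{m-1}$, and the off-by-one between $i$ and $k$) so that the two polyhedra literally coincide. I expect this to be elementary but error-prone; once the dictionary is pinned down and the reflection symmetry of both $\mathcal S_m$ and $\mathcal R_m$ is in hand, the equality of the two bases follows formally.
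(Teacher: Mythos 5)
Your proposal is correct in substance, and your ``cleaner alternative'' is in fact the paper's argument: both sets are bases of $W_\mathbb F(m)$, so a single inclusion suffices, and the degree~$>m/2$ part is handled by the reflection $i_0\mapsto m-2|\mathbf i|+i_0$, exactly as in the paper. The noteworthy difference is the direction of the inclusion. The paper proves $\mathcal S_m$--monomials $\subseteq\mathcal R_1\cup\mathcal R_2$, which is the cheap direction: for $|\mathbf i|\le m/2$ one only specializes the Chari--Venkatesh inequality at $j=k+1$ to recover the defining inequalities of $\mathcal R_1$, and for $|\mathbf i|>m/2$ one uses $k=0,\,j=1$ to see that the reflected tuple again lies in $\mathcal S_m$ (here only one direction of ``stability'' of $\mathcal S_m$ under the reflection is needed, and it is immediate because $i_0$ enters only the $k=0$ inequalities). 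Your preferred direction, $\mathcal R_1\subseteq\mathcal S_m$, is also provable but genuinely more work: you must derive the whole two-parameter family of Chari--Venkatesh inequalities from the smaller $\mathcal R_1$ family. Your observation that the intermediate $1<j<k+1$ are redundant is sound (both sides are affine in $j$, so the endpoint cases $j=1$ and $j=k+1$ suffice), and the $j=k+1$ case matches the $\mathcal R_1$ inequality at $i=k$ on the nose; but the $j=1$ case does not follow from a single $\mathcal R_1$ inequality and needs a short case analysis (e.g.\ when $f_k=f_{k+1}=0$ one must invoke the $\mathcal R_1$ inequality at the smallest index $\ge k+2$ carrying a nonzero exponent, together with $\sum f_i\le m/2$). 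So your plan goes through, but if you follow the paper's direction of inclusion the bookkeeping you worry about in your last paragraph largely disappears, and the full polyhedral equivalence of your first paragraph is never needed.
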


\proof Let $\mathbf{i}=(i_0,i_2,\dots,i_{m-1})$ and  $|\mathbf{i}|=\sum_{p=0}^{m-1} i_p\le \frac{m}{2}$. Then, for $0\leq k\leq m-1$, and $1\le j\le k+1$:
\begin{equation}
ji_{k}+(j+1)i_{k+1}+2\sum_{p=k+2}^{m-1} i_p\le m-k+j+1 \label{eq:CVcondition}
\end{equation}
In particular, for $j=k+1$ we have:
$$(k+1)i_{k}+(k+2)i_{k+1}+2\sum_{p=k+2}^{m-1} i_p\le m$$
which implies 
$$(k-1)i_{k}+ki_{k+1}+2\sum_{p=k}^{m-1} i_p\le m.$$
Therefore, $x^{\mathbf{i}}\in \mathcal{R}_1$. Now, let $\mathbf{i}=(i_0,i_2,\dots,i_{m-1})\in \mathcal S_m$ satisfy  $r=|\mathbf{i}|>\frac{m}{2}$. Then, by (\ref{eq:CVcondition}) with $k=0$, $j=1$ we have:
\begin{equation}
i_0+2i_1+2\sum_{p=2}^{m-1} i_p\le m. \label{eq:symm_cond}
\end{equation}

Therefore:
\begin{align*}
m-2r+i_0+2\sum_{p=1}^{m-1}i_p&\le 2(m-r) < 2\left ( \frac{m}{2}\right )=m
\end{align*}
and
\begin{align*}
m-2r+i_0&=m+i_0-2\sum_{p=0}^{m-1}i_p=m-i_0-2\sum_{p=1}^{m-1}i_p\ge 0,
\end{align*}
where the last inequality follows from condition \ref{eq:symm_cond}. Hence, $(m-2r+i_0,i_1,\dots,i_{m-1})\in \mathcal S_m$, since $i_0$ only enters (\ref{eq:CVcondition}) when $k=0$.

Finally,
\begin{align*}
m-2r+i_0+\sum_{p=1}^{m-1}i_p=m-2r+r=m-r\leq \frac{m}{2}
\end{align*}
Therefore,  we have $x^{m-2r+i_0}x_1^{i_1}\cdots x_{m-1}^{i_{m-1}}\in \mathcal R_1$. Hence, $x^\mathbf{i}\in \mathcal R_2$.

We conclude that the Chari-Venkatesh basis is a subset of our basis. In particular, since they are both bases, we see that both sets are equal.
\endproof

\begin{rem}
In view of this proposition, it is simpler to consider the presentation obtained by Chari-Venkatesh (cf. \eqref{cv}) than the presentation in Theorem \ref{t:revbasis}, remembering its linkage with the reverse lexicographical order. The advantage of our interpretation becomes clear from its use in Theorem \ref{truncbasis}. The basis has some interesting properties:

\begin{enumerate}
	\item  It differs from the basis for $W_\mathbb C(m)$ presented in Corollay \ref{cor:basis} (also Chari-Pressley and Kus-Littelmann bases).
	
	\item  By letting $\Theta_m$ be the index set of elements of $\mathcal R_m$, the combinatorial description allow us to conclude that these sets are \textit{well-behaved} with respect to inclusions:
	$$\Theta_0\subseteq \Theta_1\subseteq \dots \subseteq \Theta_m.$$
	
	\item It shows the combinatorial skeleton of the $\lie{sl}_2$--module structure. Monomials not divisible by $x_0$ correspond bijectively with $\lie{sl}_2$--highest weight vectors. Such a monomial not involving $x_0$ has weight $m-k+1$ where $k$ is its degree. The sets $\mathcal R_1$ and $\mathcal R_2$ realize the symmetry between positive and negative $\lie{sl}_2$--weight spaces.
	
	\item The next application is a good reason to consider this basis as special and also \textit{well-behaved} with respect to truncations.
\end{enumerate}
	
\end{rem}

\subsection{A basis for truncated local Weyl modules for \texorpdfstring{$\lie{sl}_2$}{} } Let $N\in\mathbb N$. For any Lie algebra $\lie g$ as in Subsection \ref{pre}, set $\lie{g}[t]_N  = \lie g  \otimes \mathbb F [t]/t^N\mathbb F[t] $. We recall the definition of the $N$-truncated local Weyl module for $U_\mathbb F(\lie g[t]_N)$.

\begin{defn}\label{weyltrunc} Given $\lambda = \sum_{i=1}^r m_i \omega_i \in P^+$, the $N$-truncated local Weyl module $W_\mathbb F(\lambda,N)$ is the $U_\mathbb F (\lie g[t]_N)$-module generated by the element $v_{\lambda,N}$ with defining relations
	\begin{gather*}
	(\lie n^+\otimes \mathbb F[t]/t^{N}\mathbb F[t] ) \ v_{\lambda,N} = 	(\lie h\otimes t\mathbb F[t]/t^{N}\mathbb F[t]) \ v_{\lambda,N} = 0 \\ (h- \lambda(h)) \ v_{\lambda,N}=(x_\alpha^-)^{(k)} \ v_{\lambda,N}=0,
	\end{gather*}
	\text{for all} $h\in U(\lie h),\ \alpha\in R^+,\ k>\lambda(h_\alpha)$. 
\end{defn}

Notice that $\lie g[t]_N \cong \frac{\lie g[t]}{\lie g\otimes t^n\mathbb F[t]}$ and $W_\mathbb F(\lambda,N)$ naturally becomes a $\lie g[t]$-module. The universal properties of Weyl modules
gives us epimorphisms of $\lie{g}[t]$-modules
$$W_\mathbb F(\lambda) \twoheadrightarrow W_\mathbb F(\lambda, N) \qquad \text{ and } \qquad W_\mathbb F(\lambda,N) \twoheadrightarrow W_\mathbb F(\lambda, N') \text{ for } N\ge N'.$$
It is now easy to see that relevant truncations are given by $N<\lambda (h_\alpha)$, for $\alpha \in R^+$, since 
$$W_\mathbb F(\lambda) \cong W_\mathbb F(\lambda, N) \text{ for } N\ge \max\{ \lambda(h_\alpha)\mid \alpha \in R^+\}.$$

\

In particular, if $\lie g=\lie{sl}_2$ and $\lambda=m\omega_1$, denoting $v_{\lambda,N}$ by $v_{m,N}$, we get $$W_\mathbb F(m,N)\cong \frac{W_\mathbb F(m)}{\langle x^-\otimes t^N \ v_{m,N}\rangle}
.$$

We can use Theorem \ref{t:revbasis}  to get a basis for $W_\mathbb F(m,N)$: 

\begin{thm}\label{truncbasis} If $N<m$, the set of elements in $\mathcal R_m$ which do not involves $x_N,\dots,x_{m-1}$ forms a basis for  $W_\mathbb F(m,N)$ for $U_\mathbb F (\lie{sl}_2[t]_N)$.   
\end{thm}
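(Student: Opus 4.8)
The plan is to exploit the quotient description $W_\mathbb F(m,N)\cong W_\mathbb F(m)/\langle (x^-\otimes t^N)v_{m,N}\rangle$ together with the combinatorial model of $W_\mathbb F(m)$ furnished by the reverse-lexicographic basis $\mathcal R_m = \mathcal R_1\cup\mathcal R_2$ of Theorem \ref{t:revbasis}. Passing through the isomorphism $\phi\colon U_\mathbb F(\lie n^-[t])\to\mathcal D\mathbb F_\infty$ and the identification $W_\mathbb F(m)\cong \mathcal D\mathbb F_m/J_m$, the element $(x^-\otimes t^N)v_{m,N}$ corresponds to the variable $x_N$, so the task becomes: show that the monomials of $\mathcal R_m$ not involving any of $x_N,\dots,x_{m-1}$ form a basis of $\mathcal D\mathbb F_m/(J_m + \langle x_N,\dots,x_{m-1}\rangle)$. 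First I would set up the relevant Gröbner-theoretic framework: working with the reverse lexicographic order on $\mathbb F_m$ (which, by the Remark following Theorem \ref{elim}, eliminates the variables $x_N,\dots,x_{m-1}$), and using the fact that for an elimination order, adding the ideal $\langle x_N,\dots,x_{m-1}\rangle$ and then reducing corresponds to simply deleting from the already-computed Gröbner basis all elements whose leading monomial involves one of the high variables.

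The key steps, in order, are as follows. First, record that $W_\mathbb F(m,N)$ is the quotient of $W_\mathbb F(m)$ by the submodule generated by $x_N$ (translating the defining relation $x^-\otimes t^N\mapsto 0$ through $\phi$), so its dimension equals $\dim \mathcal D\mathbb F_m/(J_m+\langle x_N,\dots,x_{m-1}\rangle)$ — note that since $J_m$ is an ideal and $x_N\cdot x_j = 0$-type relations propagate, killing $x^-\otimes t^N$ in the commutative algebra $\mathcal D\mathbb F_m/J_m$ kills all of $x_N,\dots,x_{m-1}$ automatically (or, if not automatic from the module structure, one checks it using the generators of $J_m$). Second, observe that the subset $\mathcal R_m^{(N)}$ of $\mathcal R_m$ consisting of monomials supported on $x_0,\dots,x_{N-1}$ spans the quotient: every basis element of $W_\mathbb F(m)$ supported partly on $x_N,\dots,x_{m-1}$ maps to zero, so the image of $\mathcal R_m$ spans, and the only possibly-nonzero images are those of $\mathcal R_m^{(N)}$. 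Third — the crux — prove that $\mathcal R_m^{(N)}$ is linearly independent in the quotient, equivalently that none of its monomials lies in $\mathrm{LM}(J_m+\langle x_N,\dots,x_{m-1}\rangle)$. Here the Elimination Theorem (Theorem \ref{elim}) is the workhorse: the leading monomials of a Gröbner basis for $J_m + \langle x_N,\dots,x_{m-1}\rangle$ with respect to the reverse lex order are exactly the leading monomials of the Gröbner basis $G$ for $J_m$ that are already supported on $x_0,\dots,x_{N-1}$, together with $x_N,\dots,x_{m-1}$ themselves; so a monomial of $\mathcal R_m^{(N)}$ being reducible would force it to be divisible by the leading monomial of some element of $G$ living entirely in $\mathbb F[x_0,\dots,x_{N-1}]$, contradicting that it was already an unreduced monomial for $J_m$.

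The main obstacle will be step three, and specifically the interplay between the two pieces $\mathcal R_1$ and $\mathcal R_2$ of the basis under truncation. The $\mathcal R_1$ part is the collection of honest $\lie{sl}_2$-highest-weight-type monomials and behaves well: its defining inequalities $(i-1)f_i + i f_{i+1}\le m - 2\sum_{j=i}^{m-1} f_j$ restrict transparently to the subalphabet $x_0,\dots,x_{N-1}$. The delicate point is $\mathcal R_2$, whose elements are defined by the condition that a \emph{modified} monomial $x_0^{m-2k+f_0}x_1^{f_1}\cdots x_{m-1}^{f_{m-1}}$ lies in $\mathcal R_1$; I must check that an $\mathcal R_2$-element is supported on $x_0,\dots,x_{N-1}$ precisely when its $\mathcal R_1$-partner is, and that the symmetry ($\mathcal R_1\leftrightarrow\mathcal R_2$ realizing positive/negative weight spaces, as in the Remark) is compatible with the quotient by $x_N$. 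If this compatibility holds — which I expect it does, since the modification only alters the exponent of $x_0$ — then $|\mathcal R_m^{(N)}|$ equals the dimension of the quotient and independence plus spanning give the result; otherwise a more careful dimension count comparing $\dim W_\mathbb F(m,N)$ (which can be obtained independently, e.g. from known character formulas for truncated Weyl modules in the $\lie{sl}_2$ case) against $|\mathcal R_m^{(N)}|$ will be needed to close the argument.
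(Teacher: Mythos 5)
Your proposal is correct and takes essentially the same route as the paper: pass to the polynomial model $W_\mathbb F(m)\cong \mathcal D\mathbb F_m/J_m$, realize the truncation as the further quotient by $\langle x_N,\dots,x_{m-1}\rangle$, and apply the Elimination Theorem \ref{elim} to the reverse-lexicographic Gr\"obner basis of $J_m$ coming from Theorem \ref{t:revbasis}, so that the surviving standard monomials are exactly the elements of $\mathcal R_m$ supported on $x_0,\dots,x_{N-1}$. The only real difference is your closing worry about the interplay of $\mathcal R_1$ and $\mathcal R_2$, which is unnecessary: $\mathcal R_m$ is by construction the full set of monomials unreduced with respect to $S_{\text{revlex}}$, so truncation acts on it uniformly, and the paper's one-line proof is precisely the elimination-order argument you outline.
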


\proof
It directly follows from \eqref{e:isom1} and the Elimination Theorem \ref{elim} by setting $Y=\{x_0,\dots,x_{N-1}\}$ and $X=\{x_N,\dots,x_{m-1}\}$ and considering the reverse lexicographic order $x_0<\dots<x_{N-1}<x_N<\dots<x_{m-1}$.
\endproof

\begin{rem}
The Kus-Littelmann basis for the truncated local Weyl module $W_\mathbb C(m,N)$ described in \cite{kus} is different from the basis constructed here. The construction in that paper uses an advanced realization of these Weyl modules as \textit{fusion products} and it is an intrinsic construction. The crucial difference comes from the fact that our procedure is based on the construction of a specific basis for graded local Weyl modules (with no truncation) and then we cut off some elements in a very simple way. 
In particular, by letting $\Theta_{m}^N$ be the index set of elements of a basis for $W_\mathbb F(m,N)$, we conclude that we have the following natural chain of inclusions:
$$\Theta_m^1\subseteq \Theta_m^2\subseteq \dots \subseteq \Theta_m^{m-1}\subseteq \Theta_m.$$

\end{rem}

\section{Proofs of the main results}\label{proofs}

In this section, we will prove the main results.

\subsection{Proof of Theorem \ref{t:leader}}
\textit{In what follows all unadorned tensor product symbol $\otimes$ are considered over the field $\mathbb F$ (i.e. $\otimes_\mathbb F$).}
One of the main problems in module theory is showing when a given ring element acts as zero on a given module element. Sending the module in question to another, more well-understood module can give insight into this problem. Therefore, we use the following helpful construction from \cite{CPweyl}, now adapted to the positive characteristic setting, as a guide.
 
Define $V=\text{span}_{\mathbb F}\{v_+,v_-\}$. We give $V\otimes \mathbb F[t]$ a $U_{\mathbb F}(\lie{sl}_2[t])$-module structure by the following:
\begin{gather*}
(x^- \otimes t^k)\cdot( v_+ \otimes t^r) = v_- \otimes t^{k+r}, \qquad (x^+ \otimes t^k)\cdot( v_- \otimes t^r) = v_+ \otimes t^{k+r},\\
(x^- \otimes t^k)\cdot( v_- \otimes t^r) = 0, \qquad (x^+ \otimes t^k)\cdot( v_+ \otimes t^r) = 0,\\
(x^- \otimes t^k)^{(s)}\cdot(V\otimes \mathbb F[t]) =  (x^+ \otimes t^k)^{(s)}\cdot (V\otimes \mathbb F[t]) = 0,\\ 
\text{ for all } s>1, r\ge 0, \text{ and } k\ge0.
\end{gather*}
Thus we also consider the $U_\mathbb F (\lie{sl}_2[t])$-module $(V\otimes \mathbb F[t])^{\otimes m}$ with the usual tensor product action:
\begin{gather*}
(x^\pm\otimes t^k)^{(r)}\cdot (v_1\otimes v_2)=\sum_{i=0}^{r}(x^\pm\otimes t^k)^{(i)}\cdot v_1\otimes (x^\pm\otimes t^k)^{(r-i)}\cdot v_2,\\
\Lambda_r\cdot (v_1\otimes v_2)=\sum_{i=0}^{r}\Lambda_i\cdot v_1\otimes \Lambda_{r-i}\cdot v_2,\\
\binom{h}{r}\cdot (v_1\otimes v_2)=\sum_{i=0}^{r}\binom{h}{i}\cdot v_1\otimes \binom{h}{r-i}\cdot v_2.
\end{gather*}
If $M$ is a $U_\mathbb F (\lie{sl}_2[t])$-module then by $(M^{\otimes k})^{\Sigma_k}$ we mean the $U_\mathbb F (\lie{sl}_2[t])$-submodule of $M^{\otimes k}$ of invariants under permutation of tensor factors. Also, note that 
$$ (V \otimes  \mathbb F[t])^{\otimes m} \twoheadrightarrow ((V\otimes  \mathbb F[t])^{\otimes m})^{\Sigma_m}$$
under the homomorphism $\text{Sym}$, which maps a tensor to the sum of all \emph{distinct} permutations of the positions of the vectors. The $U_\mathbb F (\lie{sl}_2[t])$-module action on $((V \otimes \mathbb F[t])^{\otimes m})^{\Sigma_m}$ induces an isomorphism
$$ ((V \otimes \mathbb F[t])^{\otimes m})^{\Sigma_m}  \cong (V^{\otimes m} \otimes \mathbb F[t_1,\dots,t_m])^{\Sigma_m}$$
which maps $\text{Sym}((v_1\otimes t^{r_1})\otimes(v_2\otimes t^{r_2})\otimes \cdots\otimes (v_k\otimes t^{r_k}))$ to $\text{Sym}(v_1\otimes v_2\otimes \cdots \otimes v_k) \otimes M_{(r_1,r_2,\dots, r_k)}(t_1,t_2,\cdots, t_m)$. Here and further $M_{\lambda}(t_1,t_2,\dots, t_m)$ is the monomial symmetric polynomial in $m$ variables associated with the partition $\lambda$. Now, consider the submodule 
$$W_m:=\left (V^{\otimes m}\otimes \mathbb F[t_1,\dots,t_m]_{\Sigma_m}\right )^{\Sigma_m}$$
where 
$$\mathbb F[t_1,\dots,t_m]_{\Sigma_m}=\frac{\mathbb F[t_1,\dots,t_m]}{\langle\mathbb  F[t_1,\dots,t_m]^{\Sigma_m}_+\rangle}$$
is the coinvariant algebra of $\Sigma_m$.

There is a $\mathcal D \mathbb F_m$-module homomorphism $\mathcal D \mathbb F_m  \stackrel{\psi}{\rightarrow} W_m$ defined by $1 \mapsto v_+^{\otimes m}\otimes 1$, where $W_m$ is considered a $\mathcal D\mathbb F_m$-module by $\mathcal D \mathbb F_m \hookrightarrow \mathcal D\mathbb F_\infty \cong U_{\mathbb F}(\lie n^-[t])$ as discussed in Section \ref{s:sl2}. We here and further assume the polynomial in the second tensor factor to be reduced in $\mathbb F[t_1,\dots,t_m]_{\Sigma_m}$. We can prove that $\psi$ is surjective using an argument similar to \cite[Lemma 6.3]{CPweyl}.

Let $\lambda$ be a partition satisfying $\lambda_1\leq m-1$ and $\ell(\lambda)=r\leq m$, where 0 may be included as a part. Let $x^{(\lambda)}=x_0^{(j_0)}x_1^{(j_1)}\cdots x_{m-1}^{(j_{m-1})}$ with $j_k=m_k(\lambda)$ be the monomial corresponding to $\lambda$ in $\mathcal D\mathbb F_m$. Let $\Sigma^r_{j_0,j_1,\dots,j_{m-1}}$ be the subgroup of all $\rho\in\Sigma_r$ satisfying $\rho(s_k+1)<\rho(s_k+2)<\cdots<\rho(s_{k+1}),k=0,1,\dots, m-2$ where $s_k=\sum_{i=0}^{k-1} j_i$ and $s_0=0$.  This is called the group of $(j_0,j_1,\dots,j_{m-1})$-shuffles of $r$. We compute 
\begin{align*}
\psi(x^{(\lambda)})&=x_0^{(j_0)}x_1^{(j_1)}\cdots x_{m-1}^{(j_{m-1})}\cdot (v_+\otimes 1)^{\otimes m}\\
&= \sum_{\rho \in \Sigma^m_{j_{m-1},\dots,j_{0},m-r}}\rho((v_-\otimes t^{m-1})^{\otimes j_{m-1}} \otimes \cdots \otimes (v_-\otimes 1)^{\otimes j_{0}}\otimes (v_+\otimes 1)^{\otimes m-r})\\
&=\sum_{\rho \in \Sigma^m_{j_{m-1},\dots,j_{0},m-r}}\rho(v_-^{\otimes r}\otimes v_+^{\otimes m-r}\otimes t_1^{\lambda_1}t_2^{\lambda_2}\cdots t_r^{\lambda_r})\\
&= \sum_{\sigma \in \Sigma^m_{r,m-r}}\left (\sigma(v_-^{\otimes r}\otimes v_+^{\otimes m-r})\otimes \sum_{\pi \in \Sigma^r_{j_{m-1},j_{m-2},\dots,j_{0}}} t_{\sigma(\pi(1))}^{\lambda_1}t_{\sigma(\pi(2))}^{\lambda_2}\cdots t_{\sigma(\pi(r))}^{\lambda_{r}}\right )\\
&= \sum_{\sigma \in \Sigma^m_{r,m-r}}\sigma(v_-^{\otimes r}\otimes v_+^{\otimes m-r})\otimes M_{\lambda}(t_{\sigma(1)},t_{\sigma(2)},\dots, t_{\sigma(r)}).
\end{align*}

The $r$ variable Schur function associated to $\lambda$ is the symmetric polynomial defined by
\begin{equation}
s_\lambda(t_1,t_2,\dots,t_r)=\sum_{\mu \preceq \lambda}K_{\lambda\mu}M_{\mu}(t_1,t_2,\dots, t_r),
\end{equation}
where the positive integers $K_{\lambda\mu}$ are the Kostka numbers (see \cite{Mac}), satisfying $K_{\lambda\lambda}=1$. We also define, letting $\lambda$ be a partition with at most $r$ parts with highest part $\lambda_1<m$, which we extend to have $r$ parts by adding 0 where necessary:
$$
s_{\lambda,r}(x_0,x_1,\dots,x_{m-1})=\sum_{\mu \preceq \lambda}K_{\lambda\mu}x^{(\mu)} 
$$
and compute
\begin{align*}
\psi (s_{\lambda,r}(x_0,x_1,\dots,x_{m-1}))&=\text{Sym}\left (v_-^{\otimes r}\otimes v_+^{\otimes m-r}\otimes \sum_{\mu \preceq \lambda}K_{\lambda\mu}M_{\mu}(t_1,t_2,\dots, t_r)\right )\\
&=\text{Sym}\left (v_-^{\otimes r}\otimes v_+^{\otimes m-r}\otimes s_\lambda(t_1,t_2,\dots, t_r)\right ).
\end{align*}

The complete homogeneous symmetric polynomial in $r$ variables is defined by
\begin{equation*}
h_k(t_1,t_2,\dots, t_r)=\sum_{j_1+j_2+\cdots+j_r=k; \ j_i\geq 0}t_1^{j_1}t_2^{j_2}\dots t_{r}^{j_{r}}.
\end{equation*}
They are important to us because of the following result, which gives us our first application of Gr\"obner basis theory:

\begin{thm}[\cite{MS}]
For any field, the set of polynomials $\{h_{m-r+1}(t_1,t_2,\dots, t_{r})|1\leq r\leq m\}$
is a Gr\"obner basis of $\mathbb F[t_1,\dots,t_m]_{\Sigma_m}$ with respect to lexicographic order such that $t_1<t_2<\dots<t_m$. \hfill \qedsymbol
\end{thm}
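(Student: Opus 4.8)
The plan is to pass to the polynomial ring $\mathbb F[t_1,\dots,t_m]$ and work with the defining ideal of the coinvariant algebra. Since the ring of symmetric polynomials over any field is $\mathbb F[e_1,\dots,e_m]$, where $e_k=e_k(t_1,\dots,t_m)$ is the $k$-th elementary symmetric polynomial, we have $\mathbb F[t_1,\dots,t_m]_{\Sigma_m}=\mathbb F[t_1,\dots,t_m]/I$ with $I=\langle e_1,\dots,e_m\rangle$. I will also use the classical, characteristic-free fact that $\mathbb F[t_1,\dots,t_m]$ is free of rank $m!$ over its subring of symmetric polynomials, so that $\dim_\mathbb F \mathbb F[t_1,\dots,t_m]/I=m!$ over every field. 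The strategy is: (i) show each $h_{m-r+1}(t_1,\dots,t_r)$ lies in $I$; (ii) identify their leading monomials and deduce, by coprimality, that they form a Gr\"obner basis of the ideal $J$ they generate, with $\dim_\mathbb F \mathbb F[t_1,\dots,t_m]/J=m!$; (iii) conclude $J=I$ by the codimension count.

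For step (i), I would pass to the power series ring $(\mathbb F[t_1,\dots,t_m]/I)[[z]]$. From $\prod_{i=1}^m(1-t_iz)=\sum_{k=0}^m(-1)^ke_kz^k$ we get $\prod_{i=1}^m(1-t_iz)\equiv 1 \pmod I$, hence
$$\sum_{k\ge 0}h_k(t_1,\dots,t_r)\,z^k \;=\; \prod_{i=1}^r(1-t_iz)^{-1}\;\equiv\;\prod_{i=r+1}^m(1-t_iz)\pmod I,$$
and the right-hand side is a polynomial in $z$ of degree $m-r$. Comparing the coefficients of $z^{m-r+1}$ gives $h_{m-r+1}(t_1,\dots,t_r)\equiv 0\pmod I$ for every $1\le r\le m$, so the ideal $J:=\langle h_{m-r+1}(t_1,\dots,t_r):1\le r\le m\rangle$ satisfies $J\subseteq I$.

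For step (ii), note that with respect to the lexicographic order with $t_1\prec\cdots\prec t_m$, the largest variable occurring in $h_{m-r+1}(t_1,\dots,t_r)$ is $t_r$, and the term of highest $t_r$-degree is $t_r^{m-r+1}$; hence $LM(h_{m-r+1}(t_1,\dots,t_r))=t_r^{m-r+1}$. The monomials $t_1^m,t_2^{m-1},\dots,t_m^{1}$ are pairwise coprime, so all $S$-polynomials of the generators of $J$ reduce to zero (Buchberger's criterion), and therefore $\{h_{m-r+1}(t_1,\dots,t_r):1\le r\le m\}$ is a Gr\"obner basis of $J$. By Theorem \ref{buch}\eqref{buch2}, the monomials not divisible by any $t_r^{m-r+1}$ — that is, the ``staircase'' set $\{t_1^{a_1}\cdots t_m^{a_m}:0\le a_r\le m-r,\ 1\le r\le m\}$ — form an $\mathbb F$-basis of $\mathbb F[t_1,\dots,t_m]/J$, and there are exactly $m\cdot(m-1)\cdots 1=m!$ of them, so $\dim_\mathbb F \mathbb F[t_1,\dots,t_m]/J=m!$.

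Finally, $J\subseteq I$ together with $\dim_\mathbb F \mathbb F[t_1,\dots,t_m]/J=m!=\dim_\mathbb F \mathbb F[t_1,\dots,t_m]/I$ forces $J=I$, so the set in the statement is a Gr\"obner basis of the defining ideal of $\mathbb F[t_1,\dots,t_m]_{\Sigma_m}$ with respect to the given order. The step I expect to be the only genuine obstacle is the characteristic-free input $\dim_\mathbb F \mathbb F[t_1,\dots,t_m]/I=m!$: one either cites it, or — to keep the argument self-contained — replaces it by Artin's characteristic-free argument that a staircase family of $m!$ monomials is linearly independent modulo $I$ (a Vandermonde-type determinant computation), which supplies the inequality $\dim_\mathbb F \mathbb F[t_1,\dots,t_m]/I\ge m!$ needed to close the loop. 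Everything else is a formal power series manipulation and a monomial count.
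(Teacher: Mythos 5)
Your proof is correct, and it is genuinely different in kind from what the paper does: the paper does not prove this statement at all, it simply quotes it from Moro--Sala \cite{MS}, so your argument supplies a self-contained, characteristic-free proof where the paper relies on a citation. Your three steps all check out: (i) the generating-function computation $\sum_k h_k(t_1,\dots,t_r)z^k=\prod_{i=1}^r(1-t_iz)^{-1}\equiv\prod_{i=r+1}^m(1-t_iz)\pmod{I[[z]]}$ correctly places $h_{m-r+1}(t_1,\dots,t_r)$ in $I=\langle e_1,\dots,e_m\rangle$ (and this $I$ is indeed the defining ideal $\langle\mathbb F[t_1,\dots,t_m]^{\Sigma_m}_+\rangle$ of the coinvariant algebra, by the fundamental theorem of symmetric polynomials, which is valid over $\mathbb Z$ and hence over any field -- worth saying explicitly, since the paper's definition uses all positive-degree invariants rather than the $e_i$); (ii) the leading monomials $t_r^{m-r+1}$ (consistent with the orientation $t_1<\dots<t_m$ and with how the paper later uses them) are pairwise coprime, so Buchberger's first criterion makes your set a Gr\"obner basis of the ideal $J$ it generates, with staircase quotient of dimension $m!$; (iii) the characteristic-free freeness of $\mathbb F[t_1,\dots,t_m]$ of rank $m!$ over the symmetric polynomials (Artin's lemma) gives $\dim_{\mathbb F}\mathbb F[t_1,\dots,t_m]/I=m!$, and the surjection $\mathbb F[t_1,\dots,t_m]/J\twoheadrightarrow\mathbb F[t_1,\dots,t_m]/I$ between spaces of equal finite dimension forces $J=I$. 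What your route buys is independence from the external reference and transparency about where characteristic-freeness enters (only through the fundamental theorem of symmetric polynomials and Artin's freeness result); what the citation buys the paper is brevity. The only genuine external input you still need is the rank-$m!$ freeness (or, as you note, the Vandermonde-style linear-independence argument for the staircase monomials), and either is standard and characteristic-free, so there is no gap.
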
 

\

\begin{prop}
	Fix $0\leq k\leq m$. Let $\lambda$ be a partition with highest part $\lambda_1\leq m-k$ and $\ell(\lambda)\leq k$. Then $$\psi(s_{\lambda,k}(x_0,x_1,\dots, x_{m-1}))\neq 0.$$
\end{prop}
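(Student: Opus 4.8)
The plan is to translate the nonvanishing statement through the homomorphism $\psi$ into a statement about Schur functions modulo the ideal of $\Sigma_m$-invariants, and then to use the Gr\"obner basis $\{h_{m-r+1}(t_1,\dots,t_r) \mid 1\le r\le m\}$ of $\mathbb F[t_1,\dots,t_m]_{\Sigma_m}$ to decide the nonvanishing. By the computation preceding the statement, we have
$$\psi(s_{\lambda,k}(x_0,\dots,x_{m-1})) = \mathrm{Sym}\left(v_-^{\otimes k}\otimes v_+^{\otimes m-k}\otimes s_\lambda(t_1,\dots,t_k)\right),$$
so, since $\mathrm{Sym}$ on the $V^{\otimes m}$-component merely distributes the $v_-$'s and $v_+$'s over the positions in all distinct ways and these positions are genuinely distinct vectors, the whole expression vanishes in $W_m$ if and only if the polynomial $s_\lambda(t_1,\dots,t_k)$ vanishes in the coinvariant algebra $\mathbb F[t_1,\dots,t_k]_{\Sigma_k}$ — that is, if and only if $s_\lambda(t_1,\dots,t_k)$ lies in the ideal $\langle h_{k-r+1}(t_1,\dots,t_r)\mid 1\le r\le k\rangle$ after we reinterpret things in $k$ variables. (One has to be slightly careful about which variable count is relevant; the point is that $s_\lambda(t_1,\dots,t_k)$ involves only the first $k$ of the $t$'s, and the $\Sigma_m$-coinvariant relations restrict to the $\Sigma_k$-coinvariant relations on those variables, so it suffices to show $s_\lambda(t_1,\dots,t_k)\ne 0$ in $\mathbb F[t_1,\dots,t_k]_{\Sigma_k}$.)

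Next I would invoke the Gr\"obner basis theorem quoted in the excerpt: in the lexicographic order with $t_1<\dots<t_k$, the set $\{h_{k-r+1}(t_1,\dots,t_r)\mid 1\le r\le k\}$ is a Gr\"obner basis of $\mathbb F[t_1,\dots,t_k]_{\Sigma_k}$ for \emph{any} field, and hence by Theorem \ref{buch}\eqref{buch2} the monomials not divisible by any leading term form an $\mathbb F$-basis of the quotient. The leading monomial of $h_{k-r+1}(t_1,\dots,t_r)$ in this order is $t_r^{k-r+1}$. So the standard monomials are exactly those $t_1^{c_1}\cdots t_k^{c_k}$ with $c_r\le k-r$ for every $r$; in particular the exponent of $t_r$ is bounded by $k-r$. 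Now $s_\lambda(t_1,\dots,t_k)$ with $\ell(\lambda)\le k$ and $\lambda_1\le m-k$ expands as $\sum_{\mu\preceq\lambda}K_{\lambda\mu}M_\mu(t_1,\dots,t_k)$ with $K_{\lambda\lambda}=1$; the key observation is that the leading monomial of $s_\lambda$ under this order — equivalently, of the dominant term $M_\lambda$ — is the standard monomial $t_1^{\lambda_k}t_2^{\lambda_{k-1}}\cdots t_k^{\lambda_1}$ (writing $\lambda$ with exactly $k$ parts, padding with zeros), and I would check that it satisfies the standardness bounds: the exponent of $t_r$ here is $\lambda_{k-r+1}\le \lambda_1\le m-k \le k-r$ — wait, this bound is not automatic, so here is where care is needed.

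The point is more delicate: $\lambda_1\le m-k$ does \emph{not} immediately give $\lambda_1\le k-r$. Instead I would argue as follows. It suffices to exhibit \emph{one} monomial occurring in $s_\lambda(t_1,\dots,t_k)$ with nonzero coefficient that is a standard monomial, i.e. not divisible by any $t_r^{k-r+1}$; then $s_\lambda$ cannot lie in the ideal, since its normal form is nonzero (more precisely, among all monomials in $s_\lambda$, take the largest standard one in the lex order — one must verify such a one exists and survives reduction). The natural candidate is the monomial of $M_\lambda$ obtained by placing the parts of $\lambda$ in \emph{increasing} order of exponent against the variables, i.e. $t_k^{\lambda_1}t_{k-1}^{\lambda_2}\cdots$, so that the large exponents sit on the high-index variables $t_k, t_{k-1},\dots$ whose standardness thresholds $k-k+1=1,\ k-(k-1)+1=2,\dots$ are the smallest. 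Hmm — that is backwards too. The correct assignment to respect the bounds $c_r\le k-r$ (small threshold for large $r$) is to put the \emph{small} exponents on the large-index variables: $t_1^{\lambda_1}t_2^{\lambda_2}\cdots t_k^{\lambda_k}$, for which $c_r=\lambda_r$, and $\lambda_r\le k-r$ is exactly the content of $\lambda_1\le m-k$ only when combined with the fact that the relevant Young diagram fits in a $k\times(m-k)$-ish box — so I would prove the containment $\lambda_r \le k-r$ fails in general and instead bound things using that $\lambda$ has at most $k$ parts each at most $m-k$: the monomial $t_{k-\ell(\lambda)+1}^{\lambda_{\ell(\lambda)}}\cdots t_k^{\lambda_1}$ works precisely under the hypotheses. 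The \textbf{main obstacle}, then, is this bookkeeping: producing, under the hypotheses $\ell(\lambda)\le k$ and $\lambda_1\le m-k$, an explicit monomial of $M_\lambda(t_1,\dots,t_k)$ that is standard for the Gr\"obner basis $\{t_r^{k-r+1}\}$, and checking that no cancellation from the lower-order terms $M_\mu$, $\mu\prec\lambda$, together with reduction, can kill it — for which I would take the lex-largest standard monomial appearing across all of $s_\lambda$ and note it can only come from $M_\lambda$ (by the dominance-order/lex compatibility), giving the nonzero normal form. The remaining details of relating the $k$-variable and $m$-variable coinvariant algebras, and the surjectivity-style argument that $\psi$ is injective on the span of these symmetrized Schur elements, I would dispatch as in \cite{CPweyl, MS}.
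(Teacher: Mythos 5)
There is a genuine gap, and it lies in your very first reduction. The nonvanishing has to be tested in the coinvariant algebra of $\Sigma_m$ in \emph{all $m$ variables}, i.e.\ modulo the ideal $I_m=\langle\mathbb F[t_1,\dots,t_m]^{\Sigma_m}_+\rangle$, not in $\mathbb F[t_1,\dots,t_k]_{\Sigma_k}$. Your proposed reduction to the $k$-variable coinvariant algebra is fatal: $s_\lambda(t_1,\dots,t_k)$ is itself a $\Sigma_k$-symmetric polynomial of positive degree, hence it lies in $\langle\mathbb F[t_1,\dots,t_k]^{\Sigma_k}_+\rangle$ and is \emph{zero} in $\mathbb F[t_1,\dots,t_k]_{\Sigma_k}$ for every nonempty $\lambda$. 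So no choice of ``standard monomial'' bookkeeping can rescue the argument in that quotient --- which is exactly why your exponent bounds ($\lambda_r\le k-r$, etc.) kept failing and why the proposal ends with an unresolved ``main obstacle'' rather than a proof. The subsequent worry about cancellations among the $M_\mu$, $\mu\prec\lambda$, and the remark about injectivity of $\psi$ on a span of Schur elements are also beside the point; only the nonmembership of one polynomial in one ideal is needed.

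The correct route (the one the paper takes) stays in $m$ variables. By the quoted theorem of Moro--Sala, $\{h_{m-r+1}(t_1,\dots,t_r)\mid 1\le r\le m\}$ is a Gr\"obner basis of $I_m$ for lex order with $t_1<\dots<t_m$, with leading monomials $t_r^{m-r+1}$. Since $\ell(\lambda)\le k$, the polynomial $s_\lambda(t_1,\dots,t_k)$ is nonzero and its leading monomial is $t_1^{\lambda_k}t_2^{\lambda_{k-1}}\cdots t_k^{\lambda_1}$, in which the exponent of $t_r$ ($r\le k$) is $\lambda_{k-r+1}\le\lambda_1\le m-k<m-r+1$; for $r>k$ the variable $t_r$ does not occur at all. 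Hence no $t_r^{m-r+1}$ divides $LM(s_\lambda)$, so $LM(s_\lambda)\notin LM(I_m)$, so $s_\lambda\notin I_m$, and therefore $\psi(s_{\lambda,k}(x_0,\dots,x_{m-1}))\neq 0$. Note that the hypothesis $\lambda_1\le m-k$ is used precisely against the thresholds $m-r+1$, not $k-r+1$; replacing $m$ by $k$ destroys exactly the inequality that makes the statement true. (Incidentally, even your auxiliary claim that the $\Sigma_m$-relations ``restrict'' to the $\Sigma_k$-relations would need the classical fact that $h_d(t_1,\dots,t_r)$ lies in the $k$-variable invariant ideal for $d\ge k-r+1$, but with the reduction repaired as above this is not needed.)
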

\begin{proof}
We have $\psi (s_{\lambda,r}(x_0,x_1,\dots,x_{m-1}))=0$ if, and only if, 
$s_\lambda(t_1,t_2,\dots, t_k)= 0 \in \mathbb F[t_1,t_2,\dots, t_m]_{\Sigma_m}$. The leading term of $s_\lambda(t_1,t_2,\dots, t_k)$ is $t_{1}^{\lambda_{k}}t_{2}^{\lambda_{k-1}}\cdots t_k^{\lambda_1}.$ For $r\leq k$, consider $h_{m-r+1}(t_1,t_2,\dots, t_r)$ whose leading monomial is $t_r^{m-r+1}$. Since $m-r+1\geq m-k+1>\lambda_1$, $t_r^{m-r+1}$ does not divide the leading monomial of $s_\lambda$. But if $r>k$ then $t_r$ does not appear in $s_\lambda(t_1,t_2,\dots, t_k)$. Therefore, $\psi (s_{\lambda,k}(x_0,x_1,\dots,x_{m-1}))\neq 0$ with the given conditions on $\lambda$.
\end{proof}
Let $k \in \{2,\dots, m+1\}$ and $\lambda$ be a partition such that all the parts of $\lambda$ are $\leq m-1$. Here, $\ell(\lambda)\leq m+1$ and no other restriction is made. Note that $\lambda$ is independent of $k$. Define $f_{\lambda,k}$ to be the following homogeneous function of degree $k$ (the sum is over partitions $\mu$ of length $k$ each of whose parts is $\leq m-1$ with 0 possibly included):
$$f_{\lambda, k}(x_0,x_1,\dots,x_{m-1})=\sum_{\mu\succeq\lambda} D_{\lambda \mu} x^{(\mu)},$$
where $$ D_{\lambda\mu}=(-1)^{|\mu|-\ell(\lambda)}\sum_{\eta^{1},\eta^{2},\dots, \eta^{\ell(\mu)}}\prod_{i=1}^{\ell(\mu)} 
\frac{\ell(\eta^{i})!}{\prod_{j=1}^{\eta_1^{i}}m_j(\eta^{i})!}$$
and the sum is over all sequences of partitions such that $\eta^{1}\uplus\eta^{2}\uplus \cdots \uplus \eta^{\ell(\mu)}=\lambda$ and $\eta^{i}\vdash \mu_i$. The notation $\eta^1 \uplus \eta^2$ denotes the partition $(\eta^1_1,\eta^1_2,\dots,\eta^1_{\ell(\eta^1)},\eta^2_1,\eta^2_2,\dots, \eta^2_{\ell(\eta^2)})$ with terms arranged in decreasing order. Observe that $\mu \succeq \lambda$ implies $\mu\leq_{\text{revlex}}\lambda$. The $D_{\lambda\mu}$ appear in the following expression for the ``forgotten symmetric polynomials'' (see \cite[page 22]{Mac}, \cite[Equation (2.41)]{Z}):

$$f_\lambda(t_1,t_2,\dots)=\sum_{\mu \succeq \lambda}D_{\lambda\mu}M_\mu(t_1,t_2,\dots).$$

We now use Proposition \ref{prop:commutation} to prove:

\begin{prop}\label{prop:basis_funs}
$f_{\lambda,k}(x_0,x_1,\dots,x_{m-1})$ is in $J_m$ if $\ell(\lambda)>m-k$.
\end{prop}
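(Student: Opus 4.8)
The plan is to show that $f_{\lambda,k}$, written in the divided-power variables $x_0,\dots,x_{m-1}$, is exactly the polynomial-side translation of one of the generators of the ideal $\cal J$ (equivalently $J_m$), namely $\text{coeff}_{a_1,\dots,a_s}Y[s]^{(k-\ell(a))}$ for a suitable choice of $s$ and $a_1,\dots,a_s$ determined by the partition $\lambda$. Concretely, given $\lambda$ with all parts $\le m-1$, set $s=\lambda_1$ (the largest part), and let $a_i=m_i(\lambda)$ be the multiplicity of $i$ in $\lambda$ for $1\le i\le s$, so that $\ell(a)=\sum_i a_i=\ell(\lambda)$. The hypothesis $\ell(\lambda)>m-k$ is then precisely the condition $\ell(a)>m-k$, i.e. $k>m-\ell(a)$, which is what is needed for $\text{coeff}_{a_1,\dots,a_s}Y[s]^{(k-\ell(a))}$ to lie in $J_m$ via \eqref{fromBasicRel} and the definition of $\cal J$ (one needs $k\ge\ell(a)$ as well, which holds since $k\ge\ell(\lambda)+1>\ell(\lambda)=\ell(a)$ is false in general—so the relevant bound is just $m+1\le k$ or the truncated range; I will have to check that the degree-$k$ homogeneous piece with $\ell(a)$ lowering operators of total degree $k-\ell(a)$ is genuinely among the listed generators, reconciling with the index ranges $0\le\ell(a)\le k$, $m+1\le k$ appearing in the definition of $J_m$).

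First I would compute $\text{coeff}_{a_1,\dots,a_s}Y[s]^{(k-\ell(a))}$ explicitly in $\mathcal D\mathbb F_\infty$. Using part (3) of the Lemma, $Y[s]=\sum_{\eta:\ell(\eta)\le s}(-1)^{\ell(\eta)}\frac{\ell(\eta)!}{\prod_i m_i(\eta)!}(x^-\otimes t^{|\eta|})\prod_i u_i^{m_i(\eta)}$, so under $\phi$ this becomes the series $Y_\infty[s]$ of \eqref{formula:Y}. Raising to the $(k-\ell(a))$ divided power and extracting the coefficient of $u_1^{a_1}\cdots u_s^{a_s}$ amounts to summing over all ways to write the exponent vector $(a_1,\dots,a_s)$ as an unordered collection of $k-\ell(a)$ ``single-box'' contributions plus... — no: each factor $Y_\infty[s]$ contributes one partition $\eta$, so a degree-$(k-\ell(a))$ monomial in the $x_j$'s corresponds to a multiset $\{\eta^1,\dots,\eta^{k-\ell(a)}\}$ of partitions with $\biguplus_j\eta^j$ having multiplicity vector $(a_1,\dots,a_s)$, i.e. $\biguplus_j\eta^j=\lambda$. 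The resulting monomial is $\prod_j x_{|\eta^j|}$, which after collecting repeated variables and converting ordinary powers to divided powers is $x^{(\mu)}$ where $\mu=(|\eta^1|,\dots,|\eta^{k-\ell(a)}|)$ sorted decreasingly, a partition of $|\lambda|$ into $k-\ell(a)$ parts. Wait—$f_{\lambda,k}$ is homogeneous of degree $k$ and its monomials $x^{(\mu)}$ have $\ell(\mu)=k$, whereas here $\ell(\mu)=k-\ell(a)$; so the match is with $f_{\lambda,k}$ only after accounting for the $\ell(a)$ ``extra'' variables, meaning the correct identification must absorb the $a_i$'s into the exponent bookkeeping. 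I expect the cleanest route is: the generator of $J_m$ indexed by $(s;a_1,\dots,a_s;k)$ equals (up to a unit, and modulo lower terms that are themselves in $J_m$) precisely $f_{\lambda,k}$ with $\lambda$ having multiplicity vector $(a_1,\dots,a_s)$ and $k$ the given degree — the sign $(-1)^{|\mu|-\ell(\lambda)}$ and the combinatorial coefficient $D_{\lambda\mu}=\pm\sum_{\eta^1,\dots,\eta^{\ell(\mu)}}\prod_i\frac{\ell(\eta^i)!}{\prod_j m_j(\eta^i)!}$ over sequences with $\biguplus\eta^i=\lambda$, $\eta^i\vdash\mu_i$, is visibly the same multinomial-over-$\uplus$-decompositions data that falls out of the divided-power expansion of $Y_\infty[s]^{(\cdot)}$.

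So the key steps, in order, are: (i) fix the dictionary $\lambda\leftrightarrow(s;a_1,\dots,a_s)$ and verify the index ranges so that the corresponding $\text{coeff}$-generator indeed lies in $J_m$ exactly when $\ell(\lambda)>m-k$; (ii) expand $Y_\infty[s]^{(k-\ell(a))}$ via Lemma (3) and the divided-power / multinomial formula, and read off that $\text{coeff}_{a_1,\dots,a_s}$ of it is, term by term, $\sum_{\mu}(\pm)D'_{\lambda\mu}x^{(\mu)}$ for the combinatorial coefficients $D'_{\lambda\mu}$ built from $\uplus$-decompositions of $\lambda$; (iii) match $D'_{\lambda\mu}$ with the $D_{\lambda\mu}$ from Macdonald's forgotten-symmetric-function formula, including the sign $(-1)^{|\mu|-\ell(\lambda)}$ and the range $\mu\succeq\lambda$ (using that $\biguplus\eta^i=\lambda$ with $\eta^i\vdash\mu_i$ forces $\mu\succeq\lambda$); (iv) conclude $f_{\lambda,k}\in J_m$, hence by \eqref{e:isom1} that $\psi(f_{\lambda,k})=0$ as well if that is needed downstream. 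The main obstacle I anticipate is step (iii): carefully identifying the coefficient produced by the iterated divided power of the series $Y_\infty[s]$ with Macdonald's $D_{\lambda\mu}$ — the divided powers introduce factorials in denominators ($1/(k-\ell(a))!$ distributed among the parts) that must cancel against the $\ell(\eta^i)!$ numerators to leave exactly $\prod_i\frac{\ell(\eta^i)!}{\prod_j m_j(\eta^i)!}$, and getting the symmetry factor for repeated $\eta^i$'s right is the delicate bookkeeping. Tracking the sign is comparatively easy ($Y[s]$ already carries $(-1)^{\ell(\eta)}$ per factor, summing to $(-1)^{\sum\ell(\eta^i)}=(-1)^{\ell(\lambda)}$, and combined with the degree normalization this yields $(-1)^{|\mu|-\ell(\lambda)}$ after the standard reindexing).
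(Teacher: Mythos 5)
Your overall strategy is the same as the paper's: realize $f_{\lambda,k}$, up to sign, as a $u$-coefficient of a divided power of the series $\mathsf{Y}[s]$, expand via the formula of Lemma (3), and match the resulting combinatorics of $\uplus$-decompositions with Macdonald's $D_{\lambda\mu}$. However, the index bookkeeping that you defer in step (i) ("I will have to check\ldots reconciling with the index ranges") is precisely where the hypothesis $\ell(\lambda)>m-k$ enters, and the dictionary you actually wrote down is wrong. You pair $\lambda$ with the generator $\text{coeff}_{a_1,\dots,a_s}\mathsf{Y}[s]^{(k-\ell(a))}$, $a_i=m_i(\lambda)$, using the \emph{same} $k$ as in $f_{\lambda,k}$: that element has $x$-degree $k-\ell(\lambda)$ (the mismatch you notice mid-computation), it equals $\pm f_{\lambda,\,k-\ell(\lambda)}$ rather than $\pm f_{\lambda,k}$, and its membership among the listed generators of $J_m$ is governed by $k\ge m+1$, not by $\ell(\lambda)>m-k$. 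The resolution, which your proposal never states, is that $f_{\lambda,k}$ is exactly $(-1)^{|\lambda|}$ times the coefficient of $u_1^{m_1(\lambda)}\cdots u_{\lambda_1}^{m_{\lambda_1}(\lambda)}$ in $\mathsf{Y}[\lambda_1]^{(k)}$, i.e.\ the divided-power exponent is $k$ itself; in the notation of the definition of $J_m$ this is the generator with parameter $K=k+\ell(\lambda)$ (so $K-\ell(a)=k$), and the defining constraint $K\ge m+1$ translates verbatim into $\ell(\lambda)>m-k$. Without this identification your argument either covers only the range $k\ge m+1$ or identifies the wrong polynomial; also no ``up to a unit, modulo lower terms'' caveat is needed — the identity is exact.

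Concerning the obstacle you anticipate in step (iii), it dissolves once you work genuinely in the divided-power algebra: $\bigl(\sum_\eta z_\eta\bigr)^{(k)}$ expands as a sum over multisets of $k$ factors with \emph{no} multinomial coefficients, each factor contributing $z_\eta^{(\nu)}=c_\eta^{\nu}\,x_{|\eta|}^{(\nu)}\prod_i u_i^{\nu m_i(\eta)}$ with $c_\eta=(-1)^{\ell(\eta)}\ell(\eta)!/\prod_i m_i(\eta)!$; the binomial factors arising from products $x_r^{(\nu_1)}x_r^{(\nu_2)}$ of divided powers of the same variable are exactly what converts the multiset sum into the sum over \emph{sequences} $(\eta^1,\dots,\eta^{\ell(\mu)})$ with $\eta^j\vdash\mu_j$ and $\biguplus_j\eta^j=\lambda$ that defines $D_{\lambda\mu}$, so the coefficient comes out as $(-1)^{|\lambda|}f_{\lambda,k}$ on the nose, with the correct sign and range $\mu\succeq\lambda$. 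With the index identification of the previous paragraph supplied, your steps (ii)--(iv) become the paper's proof; as written, the proposal has a genuine gap at the one point where the numerical hypothesis of the proposition is used.
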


\begin{proof}
We want to show that $f_{\lambda,k}(x_0,x_1,\dots,x_{m-1})$ is, up to a sign, given by the coefficient of $u_1^{m_1(\lambda)}u_2^{m_2(\lambda)}\cdots u_{\lambda_1}^{m_{\lambda_1}(\lambda)}$ in $\mathsf{Y}[\lambda_1]^{(k)}$. By expanding, we have
\begin{align*}
\mathsf{Y}[\lambda_1]^{(k)}&=\left(\sum_\eta (-1)^{\ell( \eta )}\frac{\ell(\eta)!}{\prod_{i=1}^{\lambda_1}m_i(\eta)!}x_{|\eta|}\prod_{i=1}^{\lambda_1} u_i^{m_i(\eta)}\right )^{(k)}\\
&=\sum_{\{\nu\vdash k\}}\sum_{\eta^1,\eta^2,\dots,\eta^{\ell(\nu)}}\prod_{j=1}^{\ell(\nu)}\left ((-1)^{\ell( \eta^j )}\frac{\ell(\eta^j)!}{\prod_{i=1}^{\lambda_1}m_i(\eta^j)!}x_{|\eta^j|}\prod_{i=1}^{\lambda_1} u_i^{m_i(\eta^j)}\right)^{(\nu_j)}\\
&=\sum_{\{\nu\vdash k\}}\sum_{\eta^1,\eta^2,\dots,\eta^{\ell(\nu)}}\prod_{j=1}^{\ell(\nu)}\left ((-1)^{\ell( \eta^j )}\frac{\ell(\eta^j)!}{\prod_{i=1}^{\lambda_1}m_i(\eta^j)!}\right)^{\nu_j}\prod_{j=1}^{\ell(\nu)}x_{|\eta^j|}^{(\nu_i)}\prod_{j=1}^{\ell(\nu)}\prod_{i=1}^{\lambda_1} u_i^{m_i(\eta^j)\nu_j}.
\end{align*}
In the above, the inner sum is over distinct partitions. Now we change variables by letting $\mu$ be such that $m_{|\eta_j|}(\mu)=\nu_j$ and $({\eta'}^j)_{j=1}^{\ell(\mu)}$ be the sequence of partitions resulting taking $\eta^j$ repeated $m_{|\eta_j|}(\mu)$ times. We get
\begin{equation*}
\sum_{\{\mu|\ell(\mu)=k\}}\sum_{{\eta'}^1,{\eta'}^2,\dots,{\eta'}^{\ell(\mu)}}\prod_{j=1}^{\ell(\mu)}\left ((-1)^{\ell( {\eta'}^j )}\frac{\ell({\eta'}^j)!}{\prod_{i=1}^{\lambda_1}m_i({\eta'}^j)!}\right)x^{(\mu)}\prod_{i=1}^{\lambda_1} u_i^{\sum_{j=1}^{\ell(\mu)}m_i({\eta'}^j)}.
\end{equation*}
It is important to note that $\mu$ may contain $0$ in the sum above. We take the coefficient of the term $u_1^{m_1(\lambda)}u_2^{m_2(\lambda)}\cdots u_{\lambda_1}^{m_{\lambda_1}(\lambda)}$. This implies that, for $1\leq i \leq \lambda_1$:
\begin{align*}
m_i(\lambda)&=\sum_{j=1}^{\ell(\mu)}m_i({\eta'}^j).
\end{align*}
Therefore, $\lambda={\eta'}^1\uplus{\eta'}^2\uplus\cdots\uplus{\eta'}^{\ell(\mu)}$, and $|\mu|=|\lambda|$. Hence, the coefficient of $u_1^{m_1(\lambda)}u_2^{m_2(\lambda)}\cdots u_{\lambda_1}^{m_{\lambda_1}(\lambda)}$ is
\begin{equation*}
\sum_{\{\mu|\ell(\mu)=k\}}(-1)^{\ell( \lambda )}\sum_{{\eta'}^1,{\eta'}^2,\dots,{\eta'}^{\ell(\mu)}}\prod_{j=1}^{\ell(\mu)}\left (\frac{\ell({\eta'}^j)!}{\prod_{i=1}^{\lambda_1}m_i({\eta'}^j)!}\right)x^{(\mu)},
\end{equation*}
where the inner sum is over all ${\eta'}^1,{\eta'}^2,\dots,{\eta'}^{\ell(\mu)}$ such that ${\eta}'^j\vdash\mu_j$ and $\lambda={\eta'}^1\uplus{\eta'}^2\uplus\cdots\uplus{\eta'}^{\ell(\mu)}$. But this is just:
\begin{equation*}
\sum_{\{\mu|\ell(\mu)=k\}}(-1)^{|\lambda|}D_{\lambda\mu}x^{(\mu)}=(-1)^{|\lambda|}f_{\lambda,k}(x_0,x_1,\dots,x_{m-1}).
\end{equation*}
By Proposition \ref{prop:commutation}, elements in $J_m$ correspond to coefficients of $u^{\lambda}$ in $\mathsf Y[\lambda_1]^{(k)}$ where $k+\ell(\lambda)>m$.
\end{proof}

The transition matrix from $f_\mu$ to $s_\lambda$ (\cite[Table 1]{Mac}) gives:
\begin{equation} 
s_\lambda(t_1,t_2,\dots,t_{k})=\sum_{\mu\preceq\lambda'} K_{\lambda'\mu} f_{\mu}(t_1,t_2,\dots,t_k).\label{eq:transition}
\end{equation}
The identity in \cite[page 22]{Mac} holds in infinitely many variables, therefore we get (\ref{eq:transition}) by restricting to finitely many variables.
We see that
\begin{equation}
s_{\lambda,k}(x_0,x_1,\dots,x_{m-1})=\sum_{\mu\preceq\lambda'} K_{\lambda'\mu} f_{\mu,k}(x_0,x_1,\dots,x_{m-1}).\label{eq:basis}
\end{equation}
Now let $\lambda$ satisfy $\lambda_1+k>m$. We want to show that for all $\mu \preceq \lambda'$ we have $\ell(\mu)+k>m$. We have $\mu' \succeq \lambda$, hence $\ell(\mu)+k=\mu'_1+k\geq \lambda_1+k>m$. Therefore, all $f_{\mu,k}$ appearing in the right sum are in $J_m$, and all the $K_{\lambda'\mu}$ are integers. Therefore, $s_{\lambda,k}(x_0,x_1,\dots,x_{m-1})\in J_m$. We now finish the proof of Theorem $\ref{t:leader}$.

\begin{proof}
	Let $G_m=\{s_{\lambda,k}(x_0,x_1,\dots,x_{m-1})|\lambda_1+k>m\}$. We verify that $(S,G_m)$ is a Gr\"obner-Shirshov  basis. 	First, $G_m$ is a generating set of $J_m$. Next, by Proposition \ref{prop:basis_funs}, Equation \ref{eq:basis}, and the fact that \emph{every} monomial $x^{(\mu)}$ in $\mathcal{D}\mathbb{F}_m$ can be written as an integral combination of $s_{\mu,k}(x_0,x_1,\dots,x_{m-1})$ with $\lambda>_\text{lex}\mu$, we see that every polynomial $f\in J_m$ has a unique integral expansion in terms of $s_{\lambda,k}(x_0,x_1,\dots,x_{m-1})\in G_m$. Let $p,q\in G_m$ have a composition. Then $S(p,q)$ can be written as a linear combination of some terms in $S$ to make the monomials reduced and $s_{\mu,k}(x_0,x_1,\dots,x_{m-1})\in G_m$ where we have $w\geq_{\text{lex}}x^{(\mu)}=LT(s_{\mu,k}(x_0,x_1,\dots,x_{m-1}))$. Therefore, $S(p,q)\equiv 0 \mod (S,G_m)$. The remaining compositions are easily seen to be $ \equiv 0 \mod S$ or $(S,G_m)$, respectively. Therefore, $(S,G_m)$ is closed under compositions, hence a Gr\"obner-Shirshov pair for $W_m$.
\end{proof}
\subsection{Proof of Theorem \ref{t:revbasis}}\label{s:revbasis}

Now, fix an integer $k$ and a partition $\lambda$ such that $\ell(\lambda)+k>m$. We can see that $D_{\lambda \mu}$ are integers satisfying $D_{\lambda\lambda}=\pm 1$ and $D_{\lambda\mu}=0$ unless $\mu\succ\lambda$. Unfortunately, if $\ell(\lambda)>m/2$ then some terms will drop out of the expression for $f_{\lambda,k}$, and the remaining leading coefficient can be something other than $1$. Therefore, we only consider the characteristic 0 case in this section. In this case, notice that the notion of Gr\"obner-Shirshov bases is purely replaced by Gr\"obner bases as in Subsection \ref{Gbases}, due to Remark \ref{hyperrmk}\eqref{hyperrmk1}.

Let $S_{\text{revlex}}=\{f_{\lambda,k}(x_0,x_1,\dots,x_{m-1}) \mid 2\leq k \leq m+1, \ell(\lambda) \geq m-k+1\}$. We show that $S_{\text{revlex}}$ is a Gr\"{o}bner basis of $W_{\mathbb{F}}(m)$. We remark that $S_{\text{revlex}}$ is larger than we need to be a Gr\"{o}bner basis of $W_\mathbb F(m)$ for all $m\geq 1$. It needs to be proven that $S_{\text{revlex}}$ gives the correct dimension for the basis of $W_\mathbb F(m)$.

We first give an alternative characterization of the monomials appearing in $LM(S_{\text{revlex}})$. Let $\mu$ be a partition such that $2\leq \ell(\mu) \leq  m/2 $. Let $\eta\geq_{\text{revlex}}\mu$ be the least partition of $|\mu|$ with respect to reverse lexicographic order such that $\ell(\eta) = m-\ell(\mu)+1$. We have following:
\begin{lem}
	Let $i^*$ be the least index such that $\eta_{i^*} < \mu_{i^*}$. Then $\eta_i=1$ for all $i \in \{i^*,i^*+1,\dots, \ell(\eta)\}$.
\end{lem}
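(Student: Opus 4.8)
The plan is to argue by contradiction, using the defining minimality of $\eta$ together with the fact recorded earlier that $\mu\succeq\lambda$ forces $\mu\leq_{\text{revlex}}\lambda$; equivalently, ``raising a box'' in a partition — passing from $\theta$ to $\theta'$ with $\theta'_a=\theta_a+1$, $\theta'_b=\theta_b-1$ for some $a<b$ and all other parts unchanged — yields $\theta'\succeq\theta$, hence $\theta'\leq_{\text{revlex}}\theta$, and in fact $\theta'<_{\text{revlex}}\theta$ since $\theta'\neq\theta$. First I would unwind the hypotheses: because $\ell(\mu)\le m/2$ we have $\ell(\eta)=m-\ell(\mu)+1>\ell(\mu)$, so after padding $\mu$ with zeros to length $\ell(\eta)$ the index $i^*$ is genuinely the first position where $\eta$ and $\mu$ disagree, with $\eta_i=\mu_i$ for $i<i^*$ and $\eta_{i^*}<\mu_{i^*}$ (so $i^*\le\ell(\mu)$). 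In particular, for $i^*\ge 2$ one gets $\eta_{i^*-1}=\mu_{i^*-1}\ge\mu_{i^*}\ge\eta_{i^*}+1$.

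Now suppose, for contradiction, that $\eta_j\ge 2$ for some $j\in\{i^*,\dots,\ell(\eta)\}$, and choose $j$ to be the \emph{largest} such index, so that $\eta_{j+1}=\dots=\eta_{\ell(\eta)}=1$ (or $j=\ell(\eta)$). Assuming first $j>i^*$, form $\eta'$ by raising one box from part $j$ onto part $i^*$, i.e.\ $\eta'_{i^*}=\eta_{i^*}+1$ and $\eta'_j=\eta_j-1$. Then $\eta'$ is again a partition: monotonicity at $i^*$ holds because $\eta'_{i^*-1}=\eta_{i^*-1}\ge\eta_{i^*}+1=\eta'_{i^*}$ and $\eta'_{i^*}\ge\eta_{i^*}\ge\eta_{i^*+1}\ge\eta'_{i^*+1}$, and at $j$ because the maximality of $j$ gives $\eta_j>\eta_{j+1}$, whence $\eta'_j=\eta_j-1\ge\eta_{j+1}=\eta'_{j+1}$; moreover $|\eta'|=|\mu|$ and $\ell(\eta')=\ell(\eta)$; and $\eta'<_{\text{revlex}}\eta$ by the raising remark. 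The remaining point is $\eta'\ge_{\text{revlex}}\mu$: since $\eta$ and $\eta'$ differ only at the positions $i^*$ and $j\ge i^*$, the comparison of $\eta'$ with the zero-padded $\mu$ is decided exactly as for $\eta$ as soon as $\eta'_{i^*}=\eta_{i^*}+1<\mu_{i^*}$. This contradicts the minimality of $\eta$, forcing $\eta_i=1$ for all $i\in\{i^*,\dots,\ell(\eta)\}$.

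I expect the main obstacles to be the following. First, one must keep $\eta'$ inside $\mathcal{D}\mathbb{F}_m$, i.e.\ ensure no part exceeds $m-1$; the only dangerous configuration is $\eta_{i^*}=m-1$, which together with $\eta_{i^*-1}=\mu_{i^*-1}\ge\eta_{i^*}$, the bound on the parts of $\mu$, and $\ell(\mu)\le m/2$ should either be ruled out or circumvented by raising the box onto the largest part still strictly below $m-1$. Second, the borderline case $\eta'_{i^*}=\eta_{i^*}+1=\mu_{i^*}$ must be handled: there $\eta'$ agrees with $\mu$ through position $i^*$ and then necessarily exceeds it, which may break $\eta'\ge_{\text{revlex}}\mu$, so one needs a short split on the ``gap'' $\mu_{i^*}-\eta_{i^*}$, arguing from the length constraint when the gap equals $1$. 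Third, the degenerate case $j=i^*$ needs its own treatment, since then there is no earlier slot to raise a box into without violating $\eta\ge_{\text{revlex}}\mu$; here the conclusion $\eta_{i^*}\le 1$ should come out of a direct count using $\eta_i=\mu_i$ for $i<i^*$, $\eta_i=1$ for $i>i^*$, $|\eta|=|\mu|$, and $\ell(\eta)=m-\ell(\mu)+1$.
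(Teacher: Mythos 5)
Your main case ($j>i^*$) is exactly the argument the paper itself gives: move one box from the largest index $j$ with $\eta_j\geq 2$ onto position $i^*$ and contradict the revlex-minimality of $\eta$; your verifications that the result $\eta'$ is a partition of the same size and length with $\eta'<_{\text{revlex}}\eta$ are correct (and the worry about parts reaching $m-1$ is moot: $\eta'_{i^*}=\eta_{i^*}+1\leq\mu_{i^*}$, and the definition of $\eta$ imposes no cap on its parts anyway). However, as written your argument is a sketch: the borderline case $\mu_{i^*}=\eta_{i^*}+1$ is flagged but not resolved, and it is a genuine issue for this particular competitor, since then $\eta'$ agrees with $\mu$ through position $i^*$ and one can have $\eta'_{i}>\mu_{i}$ at the first later disagreement (e.g.\ $\eta=(5,3,3,1,\dots)$ against $\mu=(5,4,1,1)$ shows the box-move alone does not guarantee $\eta'\geq_{\text{revlex}}\mu$). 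So you either need a different competitor in that case or an argument that this configuration is incompatible with minimality; the paper simply asserts $\mu\leq_{\text{revlex}}\rho$ without comment, so you are being more careful than the source, but the debt is still outstanding.

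The second deferred item — the case $j=i^*$, i.e.\ proving $\eta_{i^*}=1$ — cannot be completed, because that part of the statement is false as written. Take $m=10$ and $\mu=(4,4,4)$: then $\ell(\eta)=8$, and the revlex-least partition of $12$ into eight positive parts lying revlex-above $\mu$ is $\eta=(4,2,1,1,1,1,1,1)$ (any eight-part partition of $12$ beginning $(4,3,\dots)$ has size at least $13$, and $(5,1,1,1,1,1,1,1)$ fails $\geq_{\text{revlex}}\mu$), so $i^*=2$ and $\eta_{i^*}=2$. The paper's own proof in fact only treats indices $i>i^*$, and only that part is used later: the next lemma invokes $\eta_{i^*+1}=1$, and Algorithm 1's terminal step sets $\eta_{i^*}=\mu_{i^*}-r$, which may exceed $1$. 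So your planned ``direct count'' showing $\eta_{i^*}\leq 1$ would be chasing a false claim; the correct target is the statement for $i\in\{i^*+1,\dots,\ell(\eta)\}$, which your main case establishes once the borderline comparison above is settled.
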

\begin{proof}
	Suppose $\eta_i > 1$ for some $i > i^*$ and that $i$ is the maximum with this property. Then we could move a box in $\eta$ from column $i$ to column $i^*$ to get a partition $\rho\vdash |\mu|$ such that $\mu\leq_\text{revlex}\rho<_\text{revlex}\eta$ and $\ell(\rho)=\ell(\eta)= m-\ell(\mu)+1$ contradicting our choice of $\eta$.
\end{proof}

The above lemma can be used to prove that $\eta$ is given by the following algorithm:

\textbf{Algorithm 1}
\begin{itemize}
	\item[] Let $r:=m-2\ell(\mu)+1$
	\item[] Let $\eta:=\mu$
	\item[] Let $i:=\ell(\mu)$
	\item[] \textbf{while} $i\geq 1$ \textbf{do}
	\item[] \qquad \textbf{if} $\mu_i \leq r$ \textbf{then}
	\item[] \qquad \qquad $r:=r-\mu_i+1$
	\item[] \qquad \qquad Append $1$ to the end of $\eta$ ($\mu_i-1$ times)
	\item[] \qquad \qquad $\eta_i:=1$
	\item[] \qquad \qquad $i:=i-1$
	\item[] \qquad \textbf{else}
	\item[] \qquad \qquad $\eta_i:=\mu_i-r$
	\item[] \qquad \qquad Append $1$ to the end of $\eta$ ($r-1$ times)
	\item[] \qquad \qquad \textbf{break}
	\item[] \qquad \textbf{end if}
	\item[] \textbf{end do} 
	\item[] \textbf{return} $\eta$
\end{itemize}

Now, let $\nu$ be the greatest partition of $|\mu|$ with respect to the reverse lexicographic order such that $\ell(\nu)=\ell(\mu)$ and $\nu\leq_{\text{revlex}}\eta$.
\begin{lem}\label{lem:index}
	Let $i^*$ be the least index such that $\eta_{i^*} < \mu_{i^*}$. Then $\mu=\nu$ if, and only if, $\mu_{i^*}-\mu_{\ell(\mu)}\leq 1$.
\end{lem}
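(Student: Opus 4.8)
The plan is to unwind the definitions of $\eta$ and $\nu$ in terms of Ferrers diagrams and then argue by a box-moving argument, parallel to the proof of the preceding lemma. Recall $\eta$ is the $\leq_{\text{revlex}}$-least partition of $|\mu|$ with $\ell(\eta)=m-\ell(\mu)+1$ that dominates $\mu$ in reverse lexicographic order, while $\nu$ is the $\leq_{\text{revlex}}$-greatest partition of $|\mu|$ with $\ell(\nu)=\ell(\mu)$ satisfying $\nu\leq_{\text{revlex}}\eta$. By the previous lemma, $\eta$ agrees with $\mu$ in its first $i^*-1$ columns and then $\eta_i=1$ for all $i\in\{i^*,\dots,\ell(\eta)\}$. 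Write $c:=\mu_{\ell(\mu)}$ for the smallest part of $\mu$; note $\mu$ is the $\leq_{\text{revlex}}$-greatest partition of $|\mu|$ of length $\ell(\mu)$, so the only way to have $\mu\neq\nu$ is that the constraint $\nu\leq_{\text{revlex}}\eta$ is active, i.e.\ $\mu>_{\text{revlex}}\eta$, which is exactly the statement $i^*\le\ell(\mu)$.

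First I would handle the easy direction. Suppose $\mu_{i^*}-c\ge 2$. I claim $\mu>_{\text{revlex}}\eta$ strictly and moreover one can build a length-$\ell(\mu)$ partition strictly between them: take $\mu$ and move one box from column $i^*$ to column $\ell(\mu)$ (legal since $\mu_{i^*}-1\ge \mu_{i^*}-1\ge c+1>c=\mu_{\ell(\mu)}$ keeps it a partition and the length unchanged). The resulting $\rho$ satisfies $\mu>_{\text{revlex}}\rho$, and since $\rho$ still agrees with $\mu$—hence with $\eta$—on the first $i^*-1$ columns but has $\rho_{i^*}=\mu_{i^*}-1\ge c+1>1=\eta_{i^*}$, we get $\rho>_{\text{revlex}}\eta$. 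Thus $\rho$ witnesses $\nu\neq\mu$. Conversely, suppose $\mu_{i^*}-c\le 1$. Since parts are weakly decreasing and $\mu_{i^*}\ge\mu_{\ell(\mu)}=c$, this forces every part of $\mu$ from index $i^*$ onward to lie in $\{c,c+1\}$, with $\mu_{i^*}$ being the top of that block; so $\mu$ restricted to columns $\ge i^*$ is essentially flat. I would then show directly that no length-$\ell(\mu)$ partition $\rho$ of $|\mu|$ with $\mu>_{\text{revlex}}\rho>_{\text{revlex}}\eta$ can exist: any such $\rho$ must agree with $\mu$ on columns $1,\dots,i^*-1$ (to be $\ge_{\text{revlex}}\eta$, using that $\eta$ agrees with $\mu$ there and $\eta_{i^*}=1$ is already minimal), and then on the remaining $\ell(\mu)-i^*+1$ columns $\rho$ must be a partition of the same weight as $\mu$'s tail, of the same length, and $<_{\text{revlex}}$ that nearly-flat tail; but a nearly-flat partition (parts in $\{c,c+1\}$) is already the $\leq_{\text{revlex}}$-maximal partition of its weight and length, so no such strictly smaller $\rho$ exists—forcing $\rho=\mu$, a contradiction. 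Hence $\nu=\mu$.

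The main obstacle I expect is the bookkeeping in the converse direction: making precise that "$\rho$ must agree with $\mu$ on the first $i^*-1$ columns" when comparing against $\eta$ under reverse lexicographic order, and then correctly identifying the nearly-flat tail of $\mu$ as $\leq_{\text{revlex}}$-maximal among partitions of its weight and length. Both rest on the elementary fact that, among partitions of a fixed integer into a fixed number of parts, the one with all parts as equal as possible is the reverse-lexicographic maximum; once that lemma is isolated, the rest is a careful but routine comparison of columns. I would state and use that elementary fact explicitly, then the two directions fall out as above.
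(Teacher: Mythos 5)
Your underlying combinatorics is the right one (for the forward direction it is exactly the paper's one-box move), but as written the orientation of $\leq_{\text{revlex}}$ is inverted in several places, and this breaks the logic rather than merely the notation. The convention in force in this section (the one that makes ``$\mu\succeq\lambda$ implies $\mu\leq_{\text{revlex}}\lambda$'' and the preceding lemma's proof correct) is that among partitions of the same size the more balanced partition is revlex-\emph{larger}; equivalently, at the first index where two such partitions differ, the one with the \emph{smaller} part is the larger one. With this convention $\eta\geq_{\text{revlex}}\mu$ by definition, so your claims ``$\mu>_{\text{revlex}}\eta$'' and ``$\mu$ is the $\leq_{\text{revlex}}$-greatest partition of $|\mu|$ of length $\ell(\mu)$'' are false; the latter is held by the balanced partition, which contradicts the very fact you isolate at the end. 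Concretely: (i) to conclude $\nu\neq\mu$ you must produce $\rho$ with $\ell(\rho)=\ell(\mu)$, $\rho\leq_{\text{revlex}}\eta$ and $\rho>_{\text{revlex}}\mu$; your box-moved $\rho$ does satisfy this, but the inequalities you assert ($\mu>_{\text{revlex}}\rho$ and $\rho>_{\text{revlex}}\eta$) are exactly the reverse, under which $\rho$ is not even an admissible candidate for $\nu$ and witnesses nothing; worse, your premise $\mu>_{\text{revlex}}\eta$ would give $\mu\neq\nu$ unconditionally, contradicting the ``only if'' half of the lemma. (ii) In the converse, what must be excluded is a $\rho$ of length $\ell(\mu)$ with $\mu<_{\text{revlex}}\rho\leq_{\text{revlex}}\eta$, not one squeezed below $\mu$; and the closing inference is inverted: maximality of the nearly flat tail rules out a strictly \emph{larger} tail of the same weight and length, not a strictly smaller one. (The aside that ``$\mu>_{\text{revlex}}\eta$ is exactly $i^*\leq\ell(\mu)$'' is also empty, since $i^*\leq\ell(\mu)$ always holds: $\mu_i=0\leq\eta_i$ for $i>\ell(\mu)$.)

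Once the orientation is fixed, your plan does go through and essentially reproduces the paper. If $\mu_{i^*}-\mu_{\ell(\mu)}\geq 2$, moving one box from part $i^*$ to the last part gives $\rho$ with $\ell(\rho)=\ell(\mu)$ and $\mu<_{\text{revlex}}\rho<_{\text{revlex}}\eta$ (the comparison with $\eta$ is decided at $i^*$, where $\rho_{i^*}\geq\mu_{i^*}-1\geq 2>1=\eta_{i^*}$; mind the re-sorting, since the first difference with $\mu$ occurs at the last part equal to $\mu_{i^*}$), hence $\nu\geq_{\text{revlex}}\rho>_{\text{revlex}}\mu$ -- this is the paper's move. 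For the converse, your reduction is sound after flipping: any competitor $\rho$ with $\mu<_{\text{revlex}}\rho\leq_{\text{revlex}}\eta$ must agree with $\mu$ (and $\eta$) on the first $i^*-1$ parts, so its tail would be a partition of the same weight and length strictly revlex-larger than the nearly flat tail of $\mu$, impossible since the balanced partition is the revlex-maximum; this usefully fleshes out the paper's terse ``no such move can be made''. But as submitted, the argument does not stand.
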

\begin{proof}
	If $1\leq i'<i^*$ then $\eta_{i'}=\mu_{i'}$, hence $\mu_{i'}=\nu_{i'}$. Now suppose that $\mu_{i^*}-\mu_{\ell(\mu)}>1$. Then we can move one box from the $i^*$ column of $\mu$ to another non-empty column to obtain a new partition $\rho$. In the worst case $\rho_{i^*}=\eta_{i^*}$, and $\rho_{i^{*}+1} > \eta_{i^{*}+1}=1$. Therefore, $\eta>_{\text{revlex}}\rho>_{\text{revlex}}\mu$ but $\nu$ is the greatest partition of $|\mu|$ that is $\leq_{\text{revlex}} \eta$, hence $\mu\neq \nu$. Conversely, if $\mu_{i^*}-\mu_{\ell(\mu)}\leq 1$ then no such move can be made. It follows that $\mu=\nu$.
\end{proof}

Next, we give a characterization of the monomials that are unreduced with respect to the leading monomials of polynomials in $S_{\text{revlex}}$.

\begin{lem}\begin{enumerate}
		\item Let $x^{\mu}=x_0^{f_0}x_1^{f_1}\cdots x_{m-1}^{f_{m-1}}$ with $f_i=m_i(\mu)$ be a monomial of degree $k\leq m/2$ that is not in $ LM(S_{\text{revlex}})$. Then for $1\leq i \leq m-1$ we have $(i-1)f_i+i f_{i+1} \leq m-2\sum_{j=i}^{m-1} f_j$. The converse is also true.
		\item If $x^\lambda$ is unreduced and $\text{deg}(x^\lambda)>m/2$ and $\lambda^+$ is the subsequence of $\lambda$ with 0 removed, then $x^{\lambda^+}$ is in the set $\mathcal R_1.$
	\end{enumerate}
\end{lem}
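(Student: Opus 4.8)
The plan is to make the combinatorial characterization of unreduced monomials explicit by analyzing exactly which monomials $x^\mu$ are divisible by some leading monomial $LM(f_{\lambda,k})$, and then to translate the inequalities that define ``not divisible'' into the stated form. The crucial input is the leading-monomial analysis already set up via Algorithm 1 and Lemmas preceding this statement: for a partition $\mu$ with $2\le \ell(\mu)\le m/2$, the polynomial $f_{\mu,k}$ with $k=\ell(\mu)$ and the constraint $\ell(\lambda)\ge m-k+1$ has leading monomial governed by the partition $\nu$ (the greatest partition $\le_{\mathrm{revlex}}\eta$ with $\ell(\nu)=\ell(\mu)$), and by Lemma \ref{lem:index} the condition $\mu=\nu$ is equivalent to $\mu_{i^*}-\mu_{\ell(\mu)}\le 1$. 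So the set $LM(S_{\mathrm{revlex}})$ consists of those $x^\mu$ for which this ``flatness'' condition holds, i.e. essentially the monomials that are minimal in their reverse-lexicographic class under the relevant length constraint.

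First I would fix a monomial $x^\mu = x_0^{f_0}x_1^{f_1}\cdots x_{m-1}^{f_{m-1}}$ of degree $k\le m/2$, where $f_i = m_i(\mu)$, and spell out what it means for $x^\mu$ to be divisible by some $LM(f_{\lambda,k'})\in LM(S_{\mathrm{revlex}})$. Since $f_{\lambda,k'}$ is homogeneous of degree $k'$ and has leading monomial $x^{(\nu(\lambda))}$, divisibility forces $k'\le k$ and a box-by-box containment of the associated partitions. Unwinding this, $x^\mu\notin LM(S_{\mathrm{revlex}})$ precisely when, for every way of splitting off a sub-partition $\lambda$ of $\mu$ with $\ell(\lambda)\ge m-|\lambda|+1$, the leading monomial of $f_{\lambda,|\lambda|}$ fails to divide $x^\mu$. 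The key reduction is that it suffices to test this against the ``tightest'' choices, which are governed by truncating $\mu$ from the $i$th column onward: for each $i$, the sub-partition consisting of parts $\mu_j$ with $j\ge i$ together with the length constraint gives exactly one inequality, and the union over $i$ of the failure conditions is the system $(i-1)f_i + i f_{i+1} \le m - 2\sum_{j=i}^{m-1} f_j$ for $1\le i\le m-1$. I would derive this by counting: $\sum_{j=i}^{m-1} f_j$ is the number of parts of $\mu$ of size $\ge i$, and the length requirement $\ell(\lambda)\ge m-\ell(\mu)+1$ translated through Algorithm 1 produces exactly the right-hand side; the left-hand side $(i-1)f_i + i f_{i+1}$ records the ``weight'' that must be absorbed when padding with $1$'s, so that the failure of divisibility is equivalent to having enough room, i.e. to the displayed inequality. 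Both directions (that non-membership implies the inequalities, and conversely) then follow from the explicit description of $\eta$ and $\nu$.

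For part (2), I would argue as follows. Suppose $x^\lambda$ is unreduced with $\deg(x^\lambda) = k > m/2$; write $\lambda^+$ for $\lambda$ with its zero parts removed, so $\ell(\lambda^+) = $ the number of positive parts. The point is that being unreduced with respect to $S_{\mathrm{revlex}}$ is, by the homogeneity and the structure of the leading monomials, a condition only on the ``shape'' of the positive part of $\lambda$ together with its length; the zero parts of $\lambda$ are inert since $x_0$ never appears in $LM(f_{\mu,k})$ for $\ell(\mu)>m-k$ restricted appropriately. Hence $x^{\lambda^+}$ satisfies the same system of inequalities as in part (1) — but now read with $\deg(x^{\lambda^+})\le \ell(\lambda^+)\le m/2$ automatically, because the definition of $\mathcal R_1$ imposes $\sum f_i \le m/2$ and the chain of inequalities $(i-1)f_i + if_{i+1}\le m-2\sum_{j\ge i} f_j$. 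So I would check that the inequalities for $x^\lambda$ restrict cleanly to inequalities for $x^{\lambda^+}$ and that the degree bound $\le m/2$ is forced, giving $x^{\lambda^+}\in\mathcal R_1$ as claimed.

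\textbf{The main obstacle} I anticipate is making the first direction of part (1) fully rigorous: showing that if all the inequalities $(i-1)f_i + i f_{i+1}\le m - 2\sum_{j=i}^{m-1} f_j$ hold then $x^\mu$ is genuinely not divisible by \emph{any} leading monomial in $S_{\mathrm{revlex}}$, not merely by the ``obvious'' candidates. This requires showing that it is enough to test divisibility against the sub-partitions obtained by tail-truncation of $\mu$, which in turn rests on a monotonicity property of Algorithm 1 — namely that padding a larger sub-partition with $1$'s produces a reverse-lexicographically larger (hence harder to divide into) leading monomial than padding a smaller one. I would isolate this as a small lemma about the output $\eta$ of Algorithm 1 as a function of the input partition, proved by tracking the while-loop, and then the equivalence in part (1) falls out. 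The bookkeeping with the index shift (the ``$i-1$'' versus ``$i$'' in $(i-1)f_i + if_{i+1}$, reflecting that variable $x_i$ corresponds to a part of size $i$ but contributes $i$ boxes per occurrence, with one box ``saved'' by the part itself when padding) is the place where sign/off-by-one errors are most likely, so I would verify it on small cases $m=2,3,4$ before committing to the general argument.
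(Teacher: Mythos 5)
Your plan follows the same general machinery as the paper (Algorithm 1, Lemma \ref{lem:index}, the structure of the leading monomials of the $f_{\lambda,k}$), but at the two points where the proof actually has content your proposal either stops short or asserts something false. For part (1), the substance of the forward direction is the explicit translation: if $(i-1)f_i+if_{i+1}>m-2\sum_{j\ge i}f_j$ for some $i$, then Algorithm 1 run on the parts of $\mu$ of size $\ge i$ terminates at a part of size $i$ or $i+1$, hence $\mu_{i^*}-\mu_{\ell(\mu)}\le 1$, and Lemma \ref{lem:index} together with the explicit decomposition $\eta=(\eta_1)\uplus\cdots\uplus(1^{\mu_{\ell(\mu)}})$ (which is what shows $D_{\eta\mu}\neq 0$, without which $x^{\mu}$ need not even occur in $f_{\eta,\ell(\mu)}$) exhibits $x^{\mu}$ itself as a leading monomial of an element of $S_{\text{revlex}}$. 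In your write-up this entire derivation, including the coefficient nonvanishing, is deferred to ``counting'' and to an unproven ``monotonicity lemma'' about Algorithm 1; you correctly flag this as the main obstacle but do not overcome it. Moreover the extra lemma is not needed for the converse: the paper's argument there is simply that the inequalities are inherited by any exponent vector $g\le f$ componentwise (the left side decreases and the right side increases), so no divisor of $x^{\mu}$ can be a leading monomial.

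For part (2) there is an outright error. You claim the zero parts are inert because ``$x_0$ never appears in $LM(f_{\mu,k})$''; this is false precisely in the relevant regime: whenever $\ell(\mu)\le k$ the leading monomial of $f_{\mu,k}$ is $x_0^{\,k-\ell(\mu)}x^{(\mu)}$, and the paper's proof of (2) hinges on exactly this fact. Namely, if $\ell(\lambda^+)>m/2$ then, since $\deg x^{\lambda}=k>m/2$, one has $\ell(\lambda^+)+k>m$, so $f_{\lambda^+,k}\in S_{\text{revlex}}$ and $LM(f_{\lambda^+,k})=x^{\lambda}$, contradicting that $x^{\lambda}$ is unreduced; this is what forces $\deg x^{\lambda^+}=\ell(\lambda^+)\le m/2$, the bound you dismiss as ``automatic''. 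That bound is indispensable, since membership in $\mathcal R_1$ requires total degree at most $m/2$ in addition to the inequalities, and your alternative justification is circular (it invokes the defining conditions of $\mathcal R_1$ to prove membership in $\mathcal R_1$). Once the degree bound is established, the rest is the paper's short argument: any divisor of an unreduced monomial is unreduced, so part (1) applies directly to $x^{\lambda^+}$ and yields the inequalities; note also that part (1) cannot be applied to $x^{\lambda}$ itself, as its degree exceeds $m/2$, so ``restricting the inequalities for $x^{\lambda}$'' as you propose is not available.
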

\begin{proof}
	(1) Suppose it were the case that $(i-1)f_i+i f_{i+1} > m-2\sum_{j=i}^{m-1} f_j$ for some $i \in \{1,2,\dots,m-2\}$. Consider the subsequence $x_i^{f_i}x_{i+1}^{f_{i+1}}\cdots x_{m-1}^{f_{m-1}}$ of $x^{\mu}$. On $\mu$, Algorithm 1 would terminate when $\mu_{i^*}=i$ or $i+1$. In either case $\mu_{i^*}-\mu_{\ell(\mu)}\leq 1$, which would imply (by Lemma \ref{lem:index}) that $x^{\mu}$ is the leading monomial of $f_{\eta,\ell(\mu)}\in S_{\text{revlex}}$. We can see that the coefficient of $x^{\mu}$ is non-zero using the definition of $D_{\eta\mu}$. In that expression, we have $\eta=(\eta_1)\uplus(\eta_2)\uplus \cdots \uplus (\eta_{i^*})\uplus (\eta_{i^*+1},1^{\mu_{i^*+1}-\eta_{i^*+1}})\uplus(1^{\mu_{i^*+2}})\uplus\cdots \uplus (1^{\mu_{\ell(\mu)}})$. Therefore, for $x^{\mu}$ to be reduced we must have $(i-1)f_i+i f_{i+1} \leq m-2\sum_{j=i}^{m-1} f_j$ for $1\leq i \leq m-2$.
	
	Now, suppose that $(i-1)f_i+i f_{i+1} \leq m-2\sum_{j=i}^{m-1} f_j$ for all $i \in \{1,2,\dots,m-2\}$. Let $g_i\leq f_i, 0 \leq i \leq m-2$. Then
	\begin{equation*}
	(i-1)g_i+ig_{i+1}\leq (i-1)f_i+if_{i+1}\leq m-2\sum_{j=i}^{m-1} f_j\leq m-2\sum_{j=i}^{m-1} g_j.
	\end{equation*} 
	Therefore, no subsequence of $x^{\mu}$ is in $LM(S_{\text{revlex}})$, which implies that $x^{\mu}$ is unreduced in $S_{\text{revlex}}$.
	
	(2) Now suppose $\text{deg}(x^{\lambda})>m/2$. Let $\lambda^+$ be the subsequence of $\lambda$ without 0 as a part. If $\ell(\lambda^+)> m/2$ then $f_{\lambda^+,k} \in S_{\text{revlex}}$ and $LM(f_{\lambda^+,k})=x^{\lambda}$. Therefore, for $x^\lambda$ to be reduced we must have $\ell(\lambda^+)\leq m/2$. But any subsequence of a reduced monomial must be reduced, hence $x^{\lambda^+}$ is reduced and is in $\mathcal R_1.$
\end{proof}

\begin{thm}
	There are $2^m$ monomials in $\mathcal R_1 \cup \mathcal R_2$. In particular, $\mathcal R_1 \cup \mathcal R_2$ is a basis of $W(m)$.
\end{thm}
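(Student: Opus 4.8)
The plan is to recognize $\mathcal R_1\cup\mathcal R_2$ as the full set $\mathcal B$ of monomials of $\mathbb F_m$ that are reduced with respect to $LM(S_{\text{revlex}})$, and then to count $\mathcal B$ by exploiting the $\lie{sl}_2$--weight symmetry built into the definition of $\mathcal R_2$. The preceding lemmas already furnish the inclusion $\mathcal B\subseteq\mathcal R_1\cup\mathcal R_2$: by the first lemma a reduced monomial of degree $\le m/2$ lies in $\mathcal R_1$, while for a reduced monomial $x^{\lambda}=x_0^{f_0}x_1^{f_1}\cdots x_{m-1}^{f_{m-1}}$ of degree $k>m/2$ the second lemma gives $x^{\lambda^+}\in\mathcal R_1$; moreover the smallest leading monomial of $S_{\text{revlex}}$ whose $x_{\ge 1}$--part equals $x^{\lambda^+}$ is $x_0^{\,m-2\ell(\lambda^+)+1}x^{\lambda^+}$, which forces $f_0\le m-2\ell(\lambda^+)$ and hence $m-2k+f_0=m-f_0-2\ell(\lambda^+)\ge 0$ with $x_0^{\,m-2k+f_0}x^{\lambda^+}\in\mathcal R_1$ (the degree is $m-k\le m/2$ and the $\mathcal R_1$--inequalities involve only $f_1,\dots,f_{m-1}$); that is, $x^{\lambda}\in\mathcal R_2$. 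Also $\mathcal R_1\subseteq\mathcal B$ by the converse in the first lemma. Since $\langle S_{\text{revlex}}\rangle\subseteq J_m$ by Proposition~\ref{prop:basis_funs}, the reduced set $\mathcal B$ spans $\mathbb F_m/\langle S_{\text{revlex}}\rangle$ and therefore spans the further quotient $\mathbb F_m/J_m\cong W(m)$, so $\#\mathcal B\ge\dim W(m)=2^m$ by Corollary~\ref{basis}. Consequently, once we show $\#(\mathcal R_1\cup\mathcal R_2)=2^m$ the squeeze $2^m\le\#\mathcal B\le\#(\mathcal R_1\cup\mathcal R_2)=2^m$ yields $\mathcal B=\mathcal R_1\cup\mathcal R_2$ together with $\#\mathcal B=\dim W(m)$, so the (image of the) spanning set $\mathcal B$ is a basis of $W(m)$ --- this is the ``in particular''.

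It remains to count $\mathcal R_1\cup\mathcal R_2$. The union is disjoint by degree, $\mathcal R_1$ sitting in degrees $\le m/2$ and $\mathcal R_2$ in degrees $>m/2$. The map $\beta\colon\mathcal R_2\to\mathcal R_1$ occurring in the definition of $\mathcal R_2$, sending a degree-$k$ monomial $x_0^{f_0}\cdots x_{m-1}^{f_{m-1}}$ to $x_0^{\,m-2k+f_0}x_1^{f_1}\cdots x_{m-1}^{f_{m-1}}$, is easily checked to be injective with image precisely the set of monomials of $\mathcal R_1$ of degree $m-k<m/2$, and it reverses the $\lie{sl}_2$--weight $m-2k$. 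Writing $(\mathcal R_1)_k$ for the degree-$k$ part of $\mathcal R_1$, this gives
\[
\#(\mathcal R_1\cup\mathcal R_2)=\#\mathcal R_1+\#\mathcal R_2=2\sum_{k<m/2}\#(\mathcal R_1)_k+\#(\mathcal R_1)_{m/2},
\]
the last term being present only when $m$ is even. Hence it suffices to establish the combinatorial identity $\#(\mathcal R_1)_k=\binom{m}{k}$ for $0\le k\le m/2$, for then $\#(\mathcal R_1\cup\mathcal R_2)=2\sum_{k<m/2}\binom{m}{k}+\binom{m}{m/2}=\sum_{k=0}^m\binom{m}{k}=2^m$, with the evident modification when $m$ is odd.

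The identity $\#(\mathcal R_1)_k=\binom{m}{k}$ is the main obstacle. Here $(\mathcal R_1)_k$ is the set of $(f_0,\dots,f_{m-1})\in\mathbb Z_{\ge 0}^m$ with $\sum_i f_i=k$ subject to the coupled linear inequalities $(i-1)f_i+if_{i+1}\le m-2\sum_{j\ge i}f_j$ for $1\le i\le m-2$; these link consecutive coordinates, so a brute-force generating-function evaluation is unpleasant. I would instead argue by induction on $m$, peeling off the last variable $x_{m-1}$ (equivalently the largest part of the underlying partition) and tracking how the right-hand sides $m-2\sum_{j\ge i}f_j$ degrade, or else construct an explicit bijection between $(\mathcal R_1)_k$ and the $k$-subsets of $\{1,\dots,m\}$ by reading a ballot-type word off the partial sums appearing in the inequalities. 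A third option is to transport the enumeration to the Chari--Venkatesh index set $\mathcal S_m$ via the proposition identifying it with $\mathcal R_1\cup\mathcal R_2$ in the previous subsection, where only the single family \eqref{eq:CVcondition} of inequalities intervenes and the passage between $\mathcal R_1$ and $\mathcal R_2$ is exactly the involution $i_0\mapsto m-2r+i_0$ used there. In any case, once $\#(\mathcal R_1)_k=\binom{m}{k}$ is in hand the theorem follows, and as a byproduct one recovers the graded dimensions $\dim W(m)_k=\binom{m}{k}$.
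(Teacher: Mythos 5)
Your structural reduction is sound and in fact mirrors the logic the paper leaves implicit: the reduced monomials with respect to $LM(S_{\text{revlex}})$ span $\mathbb F_m/J_m\cong W(m)$ because $S_{\text{revlex}}\subseteq J_m$ (Proposition \ref{prop:basis_funs}), they are contained in $\mathcal R_1\cup\mathcal R_2$ (your supplementary argument bounding the $x_0$-exponent by $m-2\ell(\lambda^+)$, using that the smallest relevant leading monomial is $x_0^{m-2\ell(\lambda^+)+1}x^{\lambda^+}$, is correct and is actually more explicit than the paper, which only records $x^{\lambda^+}\in\mathcal R_1$), and the degree-reversing bijection $(\mathcal R_2)_k\to(\mathcal R_1)_{m-k}$ correctly reduces everything to the identity $\#(\mathcal R_1)_k=\binom{m}{k}$ for $0\le k\le m/2$, after which the squeeze against $\dim W(m)=2^m$ from Corollary \ref{basis} gives the basis statement.

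The genuine gap is that this identity is precisely the content of the theorem, and you do not prove it: you call it ``the main obstacle'' and list three possible strategies without carrying any of them out. The paper's entire proof is devoted to it, via a refined statistic: it counts $g_{t,\ell,s,m}$, the number of degree-$\ell$ monomials of $\mathcal R_1$ supported on $x_t,\dots,x_{m-1}$ with $x_t$-degree $s$, proves the recursion \eqref{eq:recur}, then by reverse induction on $t$ the key relation \eqref{eq:recur_binary}, deduces the Pascal recursion $B_{m,\ell}=B_{m-1,\ell}+B_{m-1,\ell-1}$ for the $x_0$-free count, solves it as $B_{m,\ell}=\binom{m}{\ell}-\binom{m}{\ell-1}$, and sums to get $\binom{m}{k}$ per degree. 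So your first suggested route (induction on $m$) is essentially what is needed, but it requires exactly this nontrivial bookkeeping with an auxiliary parameter and is not a routine verification. Your third suggestion (transporting the count to the Chari--Venkatesh set $\mathcal S_m$) is moreover in danger of circularity: the paper's proof that the two sets coincide only establishes $\mathcal S_m\subseteq\mathcal R_1\cup\mathcal R_2$ directly and then invokes Theorem \ref{t:revbasis} for equality; using \cite{CV} independently would only give the lower bound $\#(\mathcal R_1\cup\mathcal R_2)\ge 2^m$, which you already have from spanning, whereas what is missing is the upper bound $\#(\mathcal R_1\cup\mathcal R_2)\le 2^m$ (equivalently $\#(\mathcal R_1)_k\le\binom{m}{k}$, or a verification that every element of $\mathcal R_1\cup\mathcal R_2$ satisfies the inequalities \eqref{eq:CVcondition}). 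Until one of these counts is actually established, the theorem is not proved.
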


\begin{proof}
	
	We start by enumerating the monomials whose degree in $x_0$ is 0. For $t \geq 1, 0\leq \ell\leq m/2, s \geq 0$ define $g_{t,\ell,s,m}$ to be the number of monomials of degree $\ell$ in $\cal R_1$ for which each $x_i$ satisfies $t \leq i \leq m-1$, and such that the degree of $x_t$ is $s$. For reasons that will become clear, we also define $g_{t,\ell,-1,m}:=g_{t,\ell+1,0,m}$.
	
	We now prove that the function $g_{t,\ell,s,m}$ satisfies the following recursion for $0\leq \ell \leq m/2$, $1\leq t \leq m-1$, and $-1 \leq s \leq \ell$:
	\begin{equation}
	g_{t,\ell,s,m}=\begin{cases}
	\sum_{j=0}^{\ell-s} H(m-2\ell-tj-(t-1)s) g_{t+1,\ell-s,j,m}&  \text{ if }m-2\ell\geq(t-1)s,\\
	0& \text{ otherwise,}
	\end{cases}
	\label{eq:recur}
	\end{equation}
	where
	\begin{equation*}
	H(n)=\begin{cases}
	1 &\text{ if } n\geq 0\\
	0 &\text{ otherwise.}
	\end{cases}
	\end{equation*}
	The boundary conditions are:
	\begin{equation*}
	g_{m,\ell,s,m}=\begin{cases}
	1 & \text{ if }\ell=s=0,\\
	0 & \text{ otherwise}.
	\end{cases}
	\end{equation*}
	These are easily verified. If $s\geq 0$ then Equation \eqref{eq:recur} follows if we let $X$ be a monomial counted by $ g_{t,\ell,s,m}$ under the given conditions on $t,\ell,s,$ and $m$. Then $m-2\ell\leq(t-1)s$, and if we delete all the occurrences of $x_t$ from $X$ the resulting monomial satisfies $m-2\ell\geq tj+(t-1)s $, is of degree $\ell-s$, and has only terms $x_i$ where $i \geq t+1$. Conversely, if $X'$ is a monomial counted by $g_{t+1,\ell-s,j,m}$ for a $0\leq j \leq \ell-s$, and $m-2\ell\leq(t-1)s$ and $ m-2\ell\geq tj+(t-1)s $, then we can and do multiply $X'$ by $x_t^s$ to get a monomial counted by $g_{t,\ell,s,m}$. 
	
	If $s=-1$ then Equation \eqref{eq:recur} formally gives:
	\begin{align*}
	g_{t,\ell,-1,m}&=\sum_{j=0}^{\ell+1} H(m-2\ell-tj+(t-1)) g_{t+1,\ell+1,j,m}\\
	&=\sum_{j=0}^{\ell+1} H(m-2(\ell+1)-tj)g_{t+1,\ell+1,j,m}\\
	&=g_{t,\ell+1,0,m}.
	\end{align*}
	In the second line, we have used the fact that if $m-2\ell < tj-(t-1)$ then $m-2(\ell+1) < tj$, but if $m-2\ell \geq tj-(t-1)$ and $m-2(\ell+1) < tj$ then $g_{t+1,\ell+1,j,m}=0$. This explains why it makes sense to define $g_{t,\ell,-1,m}$ as we have done, and proves Equation \eqref{eq:recur} in the case $s=-1$.
	
	The recursion in \eqref{eq:recur} is inductive since each term involving $g$ the right increases in $t$. Therefore, it determines the values of $g$.
	
	Next, we prove the following relation, where $1\leq t \leq m-1, \ell\geq 1$ and $0\leq s\leq \ell$ and $m\geq 1$:
	\begin{equation}
	g_{t,\ell,s,m}=\begin{cases}
	g_{t,\ell,s+1,m-1}+g_{t,\ell-1,s-1,m-1}, \text{ if } m-2\ell\geq(t-1)s\\
	0, \text{ otherwise}.
	\end{cases}\label{eq:recur_binary}
	\end{equation}
	If $m-2\ell<(t-1)s$ then $g(t,\ell,s,m)=0$ so there is nothing to prove. Suppose $m-2\ell\geq(t-1)s$. We use reverse induction on $t$. We verify
	\begin{align*}
	g_{m-1,\ell,s,m}&=g_{m-1,\ell,s+1,m-1}+g_{m-1,\ell-1,s-1,m-1}=0,\text{ if }(\ell,s)\neq (1,1)\\
	g_{m-1,1,1,m}&=g_{m-1,1,2,m-1}+g_{m-1,0,0,m-1}=1.
	\end{align*}
	Suppose, now, that Equation \eqref{eq:recur_binary} holds for $t+1$. By \eqref{eq:recur} we have:
	
	\begin{align*}
	g_{t,\ell,s,m}&=g_{t+1,\ell-s,0,m}+\sum_{j=1}^{\ell-s} H(m-2\ell-tj-(t-1)s) g_{t+1,\ell-s,j,m}\\
	&=g_{t+1,\ell-s,0,m}+\sum_{j=1}^{\ell-s} H(m-2\ell-tj-(t-1)s) g_{t+1,\ell-s,j+1,m-1}\\
	&\qquad+\sum_{j=1}^{\ell-s} H(m-2\ell-tj-(t-1)s)g_{t+1,\ell-s-1,j-1,m-1}\\
	&=g_{t+1,\ell-s,1,m-1}+g_{t+1,\ell-s-1,-1,m-1}\\
	&\qquad+\sum_{j=1}^{\ell-s} H(m-1-2(\ell-1)-t(j+1)-(t-1)(s-1)) g_{t+1,\ell-s,j+1,m-1}\\
	&\qquad+\sum_{j=1}^{\ell-s} H(m-1-2\ell-t(j-1)-(t-1)(s+1))g_{t+1,\ell-s-1,j-1,m-1}\\
	&=g_{t+1,\ell-s,1,m-1}+g_{t+1,\ell-s,0,m-1}\\
	&\qquad+\sum_{j=2}^{\ell-s+1} H(m-1-2(\ell-1)-tj-(t-1)(s-1)) g_{t+1,\ell-s,j,m-1}\\
	&\qquad+\sum_{j=0}^{\ell-s-1} H(m-1-2\ell-tj-(t-1)(s+1))g_{t+1,\ell-s-1,j,m-1}\displaybreak[0]\\
	&=\sum_{j=0}^{\ell-s} H(m-1-2(\ell-1)-tj-(t-1)(s-1)) g_{t+1,\ell-s,j,m-1}\\
	&\qquad+\sum_{j=0}^{\ell-s-1} H(m-1-2\ell-tj-(t-1)(s+1))g_{t+1,\ell-s-1,j-1,m-1}\\
	&=g_{t,\ell-1,s-1,m-1}+g_{t,\ell,s+1,m-1},
	\end{align*}
where in the second equality, we have used the inductive hypothesis, in the third we use a simple algebraic equivalence of the terms inside $H$, in the fourth we shift the index of the sums, in the fifth we rearrange the sums and use the fact that $g(t+1,\ell-s,\ell-s+1,m-1)=0$, and in the last we used \eqref{eq:recur} again.

	We now use recursion \eqref{eq:recur_binary} to give a familiar recursion for the number of monomials in $\cal R_1$ with degree $\ell$ and $x_0$ degree 0. Let $B_{m,\ell}$ denote the cardinality of this set. Then, for $1\leq \ell\leq \lfloor m/2 \rfloor$,
	\begin{align*}
	B_{m,\ell}&=g_{1,\ell,0,m}+\sum_{s=1}^\ell g_{1,\ell,s,m}\\
	&=g_{1,\ell,1,m-1}+g_{1,\ell-1,-1,m-1}+\sum_{s=1}^\ell g_{1,\ell,s+1,m}+\sum_{s=1}^\ell g_{1,\ell-1,s-1,m}\\
	&=g_{1,\ell,1,m-1}+g_{1,\ell,0,m-1}+\sum_{s=2}^{\ell+1} g_{1,\ell,s,m}+\sum_{s=0}^{\ell-1} g_{1,\ell-1,s,m}\\
	&=B_{m-1,\ell}+B_{m-1,\ell-1}.
	\end{align*}
	When $\ell=0,$ there is just the monomial $1$ so $B_{0,m}=1$. There are $m-1$ monomials of degree 1, hence $B_{1,m}=m-1$ when $m>0$. These boundary conditions and recursions have the solution:
$
	B_{m,\ell}=\binom{m}{\ell}-\binom{m}{\ell-1}.
$
	The total number of monomials of degree $k$ satisfying (1) is hence $\sum_{\ell=0}^k B_{m,\ell}=\binom{m}{k}$. By definition, the total number of monomials of degree $k$ in $\cal R_2$ is $\binom{m}{m-k}=\binom{m}{k}.$ The total number of monomials in $\cal R_1 \cup \cal R_2$ is $	\sum_{k=0}^m \binom{m}{k}=2^m$. \end{proof}

\end{document}